\DeclareFontFamily{U}{russian}{}
\DeclareFontShape{U}{russian}{m}{n}
        { <5><6> wncyr5
        <7><8><9> wncyr7
        <10><10.95><12><14.4><17.28><20.74><24.88> wncyr10 }{}
\DeclareSymbolFont{Russian}{U}{russian}{m}{n}
\DeclareSymbolFontAlphabet{\mathcyr}{Russian}
\let\@math@cyr\mathcyr
\renewcommand{\mathcyr}[1]{\@math@cyr{\cyracc #1}}
\DeclareFontFamily{OT1}{pzc}{}
\DeclareFontShape{OT1}{pzc}{m}{it}%
             {<-> s * [1,150] pzcmi7t}{}
\DeclareMathAlphabet{\mathpzc}{OT1}{pzc}%
                                 {m}{it}
\DeclareFontFamily{U}{russian}{}
\DeclareFontShape{U}{russian}{m}{n}
        { <5><6> wncyr5
        <7><8><9> wncyr7
        <10><10.95><12><14.4><17.28><20.74><24.88> wncyr10 }{}
\DeclareSymbolFont{Russian}{U}{russian}{m}{n}
\DeclareSymbolFontAlphabet{\mathcyr}{Russian}
\let\@math@cyr\mathcyr
\renewcommand{\mathcyr}[1]{\@math@cyr{\cyracc #1}}
\newcommand{\NN} {\mathbb N}
\newcommand{\ZZ} {\mathbb Z}
\newcommand{\QQ} {\mathbb Q}
\newcommand{\RR} {\mathbb R}
\newcommand{\CC} {\mathbb C}
\renewcommand{\AA} {\mathbb A}
\newcommand{\PP} {\mathbb P}
\newcommand{\GG}{\mathbb G_m}
\newcommand{\BM}{\mathrm{BM}}
\DeclareMathOperator{\Der}{D}
\DeclareMathOperator{\Comp}{C}
\DeclareMathOperator{\Hom}{Hom}
\DeclareMathOperator{\uHom}{\underline{\mathrm{Hom}}}
\DeclareMathOperator{\Tot}{Tot}
\newcommand{\derL}{\mathbf{L}}
\newcommand{\derR}{\mathbf{R}}
\DeclareMathOperator{\sus}{\Sigma^\infty}
\DeclareMathOperator{\cupp} {\scriptstyle\cup\textstyle}
\newcommand{\spe}{\mathrm{sp}}
\newcommand{\HB}{H_\mathcyr B}
\DeclareMathOperator{\CH}{CH}
\DeclareMathOperator{\ch}{ch}
\DeclareMathOperator{\td}{td}
\DeclareMathOperator{\scup}{\scriptstyle\cup\textstyle}
\DeclareMathOperator{\scap}{\scriptstyle\cap\textstyle}
\newcommand{\smx}[1]{{\mathit S\mspace{-1.mu}m}/#1}
\DeclareMathOperator{\PSh}{PSh}
\DeclareMathOperator{\Sh}{Sh}
\DeclareMathOperator{\Sp}{Sp}
\DeclareMathOperator{\Spr}{Sp^{{\rm ring}}}
\DeclareMathOperator{\Ho}{\mathcal Ho}
\DeclareMathOperator{\smod}{\E_\syn-mod}
\newcommand{\DMte}{\smash{\Der_{\AA^1}^{\rm eff}}}
\newcommand{\DMt}{\smash{\Der_{\AA^1}}}
\newcommand{\DMB}{\smash{DM_\mathcyr B}}
\newcommand{\E}{\mathbb E}
\newcommand{\F}{\mathbb F}
\newcommand{\G}{\mathbb G}
\newcommand{\un}{\mathbbm 1}
\newcommand{\rig}{\mathrm{rig}}
\newcommand{\syn}{\mathrm{syn}}
\newcommand{\dR}{\mathrm{dR}}
\newcommand{\FdR}{\mathrm{FdR}}
\newcommand{\rel}{\mathrm{rel}}
\newcommand{\tube}[2]{{]#1[_{#2}}}
\DeclareMathOperator{\Fisoc}{F-isoc}
\DeclareMathOperator{\Cone}{Cone}
\DeclareMathOperator{\End}{End}
\DeclareMathOperator{\Spm}{Spm}
\DeclareMathOperator{\dlog}{dlog}
\DeclareMathOperator{\id}{Id}
\DeclareMathOperator{\Spec}{Spec}
\DeclareMathOperator{\Pic}{Pic}
\DeclareMathOperator{\tot}{tot}
\DeclareMathOperator{\gr}{gr}
\DeclareMathOperator{\Gdm}{Gdm}
\DeclareMathOperator{\tGdm}{\widetilde{Gdm}}
\DeclareMathOperator{\Ext}{Ext}
\DeclareMathOperator{\Coker}{Coker}
\DeclareMathOperator*{\colim}{colim}
\renewcommand{\Im}{\mathrm{Im}}
\title{The rigid syntomic ring spectrum}
\author{F.~D\'eglise, N.~Mazzari}
\date{\today}
\newtheorem{thm}{Theorem}[subsection]
\newtheorem*{thmi}{Theorem}
\newtheorem{prop}[thm]{Proposition}
\newtheorem{cor}[thm]{Corollary}
\newtheorem{lm}[thm]{Lemma}
\theoremstyle{definition}
\newtheorem{df}[thm]{Definition}
\newtheorem{ex}[thm]{Example}
\newtheorem{num}[thm]{}
\theoremstyle{remark}
\newtheorem{rem}[thm]{Remark}
\numberwithin{equation}{thm}
\begin{document}
%\keywords{Rigid syntomic cohomology, Beilinson motives}
%\subjclass[2000]{Primary:14F42; Secondary:14F30}
\email[F. D\'eglise]{frederic.deglise@ens-lyon.fr}
\email[N. Mazzari]{nicola.mazzari@math.u-bordeaux1.fr}

\begin{abstract}
The aim of this paper is to show that rigid syntomic cohomology -- defined by Besser --
 is representable by a rational ring spectrum in the motivic homotopical sense.
 In fact, extending previous constructions,
 we exhibit a simple representability criterion and we apply it to several 
 cohomologies in order to get our central result.
 This theorem gives new results for rigid syntomic cohomology such as h-descent
 and the compatibility of cycle classes with Gysin morphisms.
 Along the way, we prove that motivic ring spectra induce a complete
 Bloch-Ogus cohomological formalism and even more.
 Finally, following a general motivic homotopical philosophy, we exhibit
 a natural notion of rigid syntomic coefficients.\\
{\em MSC: 14F42; 14F30}.\\
{\em Key words: Rigid syntomic cohomology, Beilinson motives, Bloch-Ogus.}
\end{abstract}

\maketitle

\section*{Introduction}

In the 1980s, Beilinson stated his conjectures relating the special values of $L$-functions and the regulator map of a variety $X$ defined over a number field \cite{Beu:84a,Beu:86a}. The regulator  considered by Beilinson  is a map from the $K$-theory of $X$ with target the Deligne-Beilinson cohomology\footnote{Here we assume  that the weight filtration  is part of the definition. This is not the case in the original definition by Deligne, where only the Hodge filtration was considered. See \cite{Beu:Not86} for a complete discussion.}  with real coefficients
$$
reg:K_{2i-n}(X)^{(i)}\otimes \QQ\rightarrow H_{DB}^n(X,\RR(i)) \ .
$$
One can define  $H_{DB}^n(X,A(i))$ for any subring $A\subset \RR$. For $A=\ZZ$,  Beilinson  proved that $H_{DB}^n(X,\ZZ(i))$ is the absolute Hodge cohomology theory: \textit{i.e.} it computes the group of homomorphisms in the derived category of mixed Hodge structures
\[
H_{DB}^n(X,\ZZ(i))=\Hom_{D^b(MHS)}(\ZZ,R\Gamma_{Hdg}(X)(i)[n])\ ,
\] 
where $R\Gamma_{Hdg}(X)$ is the mixed Hodge complex associated to $X$ whose cohomology is the Betti cohomology of $X$ endowed with its mixed Hodge structure \cite{Beu:Not86}. Further Beilinson conjectured that the higher $K$-theory groups form an absolute cohomology theory, in fact the universal one, called motivic cohomology. This vision is now partly accomplished. We do not have the category of mixed motives, but we can construct a triangulated category playing the role of its derived category. More precisely, Cisinski and D\'eglise proved that for any finite dimensional noetherian scheme $X$ there exists a monoidal triangulated category $\DMB(X)=\DMB(X,\QQ)$ (along with the six operations) such that
\[
\HB^{n,i}(X):=\Hom_{\DMB(S)}(\un_S,\pi_*\un_X (i)[n])\simeq K_{2i-n}(X)^{(i)}\otimes \QQ
\]
when $\pi:X\to S$ is a smooth morphism and $S$ is regular \cite{CD3}.

Now let $K$ be a $p$-adic field (\textit{i.e.} a finite extension of $\QQ_p$) with ring of integers $R$. Given $X$ a smooth and algebraic $R$-scheme, Besser  defined the analogue of the Deligne-Beilinson cohomology in order to study the Beilinson conjectures for $p$-adic $L$-functions \cite{Bes:Syn00}. The work of Besser extends a construction initiated by Gros  \cite{Gros}. The cohomology defined by Besser is called the \emph{rigid syntomic}\footnote{The word {\em rigid} is due to the fact that the rigid cohomology of Berthelot plays a role in the definition. The word {\em syntomic} comes from the work of Fontaine-Messing \cite{FonMes:87a} where the syntomic site was used to define a cohomology theory strictly related to the one of Besser in the smooth and projective case.} cohomology,  denoted by $H^n_\syn(X,i)$. Roughly it is defined as follows: let $R\Gamma_\rig(X_s)$ (resp. $R\Gamma_\dR(X_\eta)$) be a complex of $\QQ_p$-vector spaces whose cohomology is the rigid (resp. de Rham) cohomology of the 
special fiber $X_s$ (resp. generic fiber $X_\eta$) of $X$, then
\[
H^n_\syn(X,i)=H^{n-1}(\Cone(f:R\Gamma_\rig(X_s)\oplus F^iR\Gamma_\dR(X_\eta)\to R\Gamma_\rig(X_s)\oplus R\Gamma_\rig(X_s))\ ,
\]
where $f(x,y)=(x-\phi(x)/p^i, sp(y)-x)$, $\phi$ is the Frobenius map, $sp$ is the Berthelot's specialization map. 

There is a regulator map for this theory and one can also interpret  rigid syntomic cohomology  as an absolute cohomology \cite{Ban:Syn02,ChiCicMaz:Cyc10}.

The aim of the present paper is to represent rigid syntomic cohomology in the triangulated category of motives by a ring object $\E_\syn$. This allows one  to prove that rigid syntomic cohomology is a Bloch-Ogus theory and satisfies h-descent
 (\emph{i.e.} proper and fppf descent). In particular, we obtain that the Gysin map is compatible with the direct image of cycles as conjectured by Besser \cite[Conjecture~4.2]{Bes:12a}. We can say that this paper is the
natural push-out of the work of the first author in collaboration with
Cisinski \cite{CD2} and that of the second author in collaboration with
Chiarellotto and Ciccioni \cite{ChiCicMaz:Cyc10}.\\[1ex]

Let us review in more detail the content of this work. \\
First  we recall some results of the motivic homotopy theory. Let $S$ be a base scheme (noetherian and finite dimensional). To any  object $\E$ in $\DMB(S)$ we can associate a bi-graded cohomology theory
\[
\E^{n,i}(X):=\Hom_{\DMB(S)}(M(X),\E(i)[n])
\]
where $M(X):=\pi_!\pi^!\un_S$ is the (covariant) motive of $\pi:X\to S$.
The cohomology defined by the unit object $\un_S$ of the monoidal category
 $\DMB(S)$ represents rational motivic cohomology denoted by $\HB$.
 When $X$ is regular, $\HB^{n,i}(X)$
 coincides with the original definition of Beilinson
 using Adams operations on rational Quillen K-theory. \\
The category of Beilinson motives $\DMB(S)$ can be constructed using some homotopical machinery starting with the category $\Comp(S,\QQ)$ of complexes of $\QQ$-linear pre-sheaves on the category of affine and smooth $S$-schemes (see \S~1). An object of $\DMB(S)$ should be thought of as a cohomology theory on the category of $S$-schemes which is $\AA^1$-homotopy invariant, satisfies the Nisnevich excision and is oriented (in the sense of remark~\ref{rem:axioms_motivic_ring_sp} point (1)). 

The category of Beilinson motives is monoidal.
Monoids with respect to this tensor structure corresponds
 to cohomology theory equipped with a ring structure.
 Following the general terminology of motivic homotopy theory,
  we call such a monoid a {\em motivic ring spectrum} (Def.~\ref{def:mrs}). 
Given such an object $\E$, the associated cohomology theory $\E^{n,i}(X)$ is
naturally a bi-graded $\QQ$-linear algebra satisfying the following properties:
\begin{enumerate}
\item{\emph{Higher cycle class/regulator}. --}
The unit section of the ring spectrum $\E$ induces a canonical morphism,
 called regulator:
$$
\sigma:\HB^{n,i}(X) \rightarrow \E^{n,i}(X)
$$
which is functorial in $X$ and compatible with products.
\item{\emph{Gysin}. --} For any projective morphism $f:Y\to X$ between smooth $S$-schemes there is a (functorial)  morphism 
$$
f_*:\E^{n,i}(Y) \rightarrow \E^{n-2d,i-d}(X).
$$
where   $d$ is the dimension of $f$. 
\item{\emph{Projection formula}. --} 
For $f$ as above and
 any pair $(x,y) \in \E^{*,*}(X) \times \E^{*,*}(Y)$,
 one has:
$$
f_*(f^*(x).y)=x.f_*(y).
$$
\item[(3')]{\emph{Degree formula}. --} For any finite morphism $f:Y \rightarrow X$
 between smooth connected $S$-schemes,
 and any $x \in \E^{n,i}(X)$,
$$
f_*f^*(x)=d.x
$$
where $d$ is the degree of the function fields extension associated with $f$.
\item{\emph{Excess intersection formula}. --} 
Consider a cartesian square of smooth $S$-schemes:
$$
\xymatrix@=14pt{
Y'\ar^q[r]\ar_g[d] & X'\ar^f[d] \\
Y\ar^p[r] & X
}
$$
such that $p$ is projective. Let $\xi$ be the excess intersection
 bundle associated with that square  and let $e$ be its rank. 
Then for any $y \in \E^{*,*}(Y)$, one gets:
$$
f^*p_*(y)=q_*(c_e(\xi).g^*(y)).
$$
\item The regulator map $\sigma$ is natural
 with respect to the Gysin functoriality.
\item[(5')] The regulator map $\sigma$ induces a Chern character
$$
\mathrm{ch_n}:K_n(X)_\QQ \rightarrow \bigoplus_{i \in \ZZ} \E^{2i-n,i}(X)
$$
which satisfies the (higher) Riemann-Roch formula of Gillet
 (see \cite{Gillet})
 %For any projective morphism $f:Y \rightarrow X$,
 %the following diagram is commutative:
%$$
%\xymatrix@=18pt{
%\HB^{n,i}(Y)\ar^-{f_*}[r]\ar_{\sigma}[d]
 %& \HB^{n-2d,i-d}(X)\ar^{\sigma}[d] \\
%\E^{n,i}(Y)\ar^-{f_*}[r] & \E^{n-2d,i-d}(X)\ .
%}
%$$
%\item The  cohomology   $\E^{*,*}$ is a twisted Poincar\'e duality theory with supports.

\item[(6)]{\emph{Descent}. --} The cohomology $\E^{n,i}$ admits a 
 functorial extension to diagrams of $S$-schemes and satisfies
 cohomological descent for the h-topology\footnote{The h-topology
 was introduced by Voevodsky.
 Recall that covers for this topology
 are given by morphisms of schemes
 which are universal topological epimorphism.}:
 given any hypercover $p:\mathcal X \rightarrow X$ for the h-topology,
 the induced morphism:
$$
p^*:\E^{n,i}(X) \rightarrow \E^{n,i}(\mathcal X)
$$
is an isomorphism.\footnote{One deduces easily from this isomorphism
 the usual descent spectral sequence.}
\item[(7)]{\emph{Bloch-Ogus theory}. --} One can associate with $\E$
 a canonical homology theory, the Borel-Moore $\E$-homology.
 For any separated $S$-scheme $X$ with structural morphism $f$,
 and any pair of integers $(n,i)$, put:
$$
\E_{n,i}^\BM(X)=\Hom\big(\un_S,f_*f^!\E(-i)[-n]\big).
$$
Then, the pair $(\E,\E^\BM)$ is a \emph{twisted Poincar\'e duality
 theory with support} in the sense of Bloch and Ogus (cf \cite{BO}).
 Moreover Borel-Moore $\E$-homology is contravariantly functorial
 with respect to smooth morphisms.
\end{enumerate}

These properties follow easily from the results proved in \cite{CD3} and \cite{Deg8}. We collect them in Section 2.

Since our aim is to prove that rigid syntomic cohomology  satisfies the Bloch-Ogus formalism, we just need to represent it as a motivic ring spectrum. Thus we prove the following criterion, which is the main result of the first section. Before stating it we introduce the following notation: for any complex $E\in \Comp(S,\QQ)$ and $X/S$ smooth and affine let
$$	
H^n(X,E):=H^n(E(X))\  .
$$
\begin{thmi}[cf. Prop.~\ref{prop:ring_spectrum_existence}]
Let $(E_i)_{i \in \NN}$ be a family of complexes in $\Comp(S,\QQ)$ forming a $\NN$-graded commutative monoid together with a section $c:\QQ[0]\to E_1(\GG)[1]$ 
 satisfying the following properties:
\begin{enumerate}
\item{\emph{Excision}. --} Let  $E_i^{Nis}$ be the associated Nisnevich sheaves. For any integer $i$  and any $X/S$  affine  and smooth,  $H^n(X,E_i)\simeq H_{Nis}^n(X,E_i^{Nis})$. 
\item{\emph{Homotopy}. --}  For any integer $i$ and any $X/S$  affine  and smooth,  $H^n(X,E_i)\simeq H^n(\AA^1_X,E_i)$.
\item{\emph{Stability}. --} Let $\bar c$ be the image of $c$ in $H^1(\GG,E_1)$.
For any smooth $S$-scheme $X$ and any pair
 of integers $(n,i)$ the following map\footnote{We let $p:X \times \GG \rightarrow X$ be the canonical projection
    and $\pi_X$ following quotient map:
   $$
   0 \rightarrow H^n(X ,E_i)
    \xrightarrow{p^*} H^n(X \times \GG,E_i)
    \xrightarrow{\pi_X} \frac{H^{n+1}(X \times \GG,E_{i+1})}{H^{n+1}(X,E_{i+1})}
    \rightarrow 0\ .
   $$
   }
$$
H^n(X,E_i) \rightarrow \frac{H^{n+1}(X \times \GG,E_{i+1})}{H^{n+1}(X,E_{i+1})},\quad 
 x \mapsto \pi_X\big(x \times \bar c\big)\quad 
$$ 
is an isomorphism.
\item{\emph{Orientation}.--}  Let $u:\GG \rightarrow \GG$ be the inverse map of the group scheme $\GG$,
 and denote by $\bar c'$ the image of $c$ in the group
 $H^1(\GG,E_1)/H^1(S,E_1)$. The following equality holds:
$u^*(\bar c')=-\bar c'$.
\end{enumerate}
Then there exists a motivic ring spectrum $\E$  together with
 canonical isomorphisms
\begin{equation*} 
\Hom_{\DMB(S)}(M(X),\E(i)[n]) \simeq H^n(X,E_i)
\end{equation*}
for integers $(n,i) \in \ZZ \times \NN$,
functorial in the smooth $S$-scheme $X$
 and compatible with products. 
Moreover, $\E$ depends functorially
 on $(E_i)_{i\in \NN}$ and $c$.
\end{thmi}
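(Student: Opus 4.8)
\emph{Proof plan.}
The strategy is to adapt the construction of \cite{CD2}, where a ring spectrum is attached to a mixed Weil cohomology, to the present, weaker hypotheses: we drop every axiom of Weil type (finiteness, Poincar\'e duality, K\"unneth) — none of which intervenes here — and keep only excision, homotopy invariance, stability and orientation, which are exactly what is needed to land in $\DMB(S)$. Recall that $\DMB(S)$ can be described as (a full subcategory of) the homotopy category of a symmetric monoidal stable model category of symmetric $\GG$-spectra built on $\Comp(S,\QQ)$: one first equips $\Comp(S,\QQ)$ with the $\AA^1$-localized, Nisnevich-descent model structure, then passes to symmetric $\GG$-spectra — with the normalization $\GG\leftrightarrow\QQ(1)[1]$ — and stabilizes; since the coefficients are rational, the \emph{oriented} part of the resulting homotopy category is canonically $\DMB(S)$, equivalently the homotopy category of $\HB$-modules. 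Commutative monoids in this model category present motivic ring spectra in the sense of Definition~\ref{def:mrs}, so it suffices to build such a monoid, functorially in $((E_i)_i,c)$.

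First I would assemble the underlying spectrum. Each $E_i$ is an object of $\Comp(S,\QQ)$; by adjunction the section $c$ is a morphism $M(\GG)\to E_1[1]$, and precomposing it with the canonical splitting $\widetilde{M}(\GG)\hookrightarrow M(\GG)$ yields a weight-one Tate class $c'\colon\QQ(1)=\widetilde{M}(\GG)[-1]\to E_1$. I then take the symmetric $\GG$-spectrum $\E$ with $E_i$ at level $i$ (trivial $\Sigma_i$-action), unit the unit $\QQ\to E_0$ of the monoid, and bonding maps the composites
\[
\QQ(1)\otimes E_i\ \xrightarrow{\ c'\otimes\id\ }\ E_1\otimes E_i\ \xrightarrow{\ \mu\ }\ E_{i+1},
\]
$\mu$ being the multiplication of the graded monoid $(E_\bullet)$. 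That same $\mu$, assembled over all weights, equips $\E$ with a pairing $\E\otimes\E\to\E$; associativity and unitality for $(E_\bullet)$ give those for $\E$, and commutativity of $(E_\bullet)$ together with the orientation condition~(4) promotes this to a structure of commutative monoid on $\E$ (see the last paragraph).

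Next I would show that $\E$ is stably fibrant, lies in $\DMB(S)$, and represents the asserted cohomology. Hypotheses (1) and (2) say precisely that each $E_i$, evaluated on affine smooth $S$-schemes, is already local for Nisnevich descent and $\AA^1$-homotopy, so that at each level $H^n(X,E_i)\simeq\Hom_{\Ho(\Comp(S,\QQ))}(M(X),E_i[n])$ in the $\AA^1$-Nisnevich-local homotopy category. Hypothesis (3), after identifying the homotopy fibre of the bonding map $\uHom(\QQ(1),E_{i+1})$ with the reduced complex $E_{i+1}(\GG\times -)/E_{i+1}(-)$ up to shift (this is the content of the footnoted split exact sequence), is exactly the statement that the adjoint bonding maps $E_i\to\uHom(\QQ(1),E_{i+1})$ are $\AA^1$-Nisnevich-local equivalences on affine smooth schemes, i.e.\ that $\E$ is a $\GG$-$\Omega$-spectrum. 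Hence level $i$ of $\E$ computes $\Hom_{\DMB(S)}(M(X),\E(i)[n])\simeq H^n(X,E_i)$ for $(n,i)\in\ZZ\times\NN$; the isomorphism is compatible with products since the multiplication of $\E$ is built from that of $(E_\bullet)$, and functorial in $((E_i)_i,c)$ since every step is. Finally $\E$ belongs to $\DMB(S)$, not merely to the rational stable category, because of orientation: condition~(4), $u^*(\bar c')=-\bar c'$, is what makes $\E$ an oriented ring spectrum, equivalently an $\HB$-algebra (cf.\ \cite{CD3,Deg8}).

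\emph{The main obstacle} is upgrading $\E$ to a symmetric $\GG$-spectrum — hence a strictly commutative monoid — out of the purely \emph{cohomological} orientation condition~(4): a priori $c'$ is not equivariant on the nose for the permutation actions on $\QQ(1)^{\otimes n}$ that enter the iterated bonding maps, because the transposition on $\QQ(1)^{\otimes 2}$ is only the identity up to the sign of the antipode $u$ of $\GG$. As in \cite{CD2}, the resolution is that rationally the obstruction to rigidifying lives in a group on which $u^*$ acts, and the equality $u^*(\bar c')=-\bar c'$ kills it, so that the space of strictly commutative models of $\E$ is non-empty (and connected); one then transports such a model along the resulting equivalence. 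A secondary, more routine point is the passage between presheaves on affine smooth $S$-schemes and genuinely fibrant objects of the stable category, which relies on the Brown--Gersten property of the local model structures on affine smooth schemes, already used implicitly in~(1)--(3).
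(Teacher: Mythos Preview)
Your construction is the paper's: take $E_i$ at level $i$ with trivial $\Sigma_i$-action, define the suspension maps via $c'$ and the monoid multiplication, verify the $\Omega$-spectrum condition from (1)--(3), and read off the ring structure from $\mu$. The misstep is the role you assign to hypothesis~(4).

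You claim that (4) is needed to promote $\E$ to a \emph{symmetric} spectrum and to a strictly commutative monoid, on the grounds that the transposition on $\QQ(1)^{\otimes 2}$ is only the identity ``up to the sign of the antipode $u$ of $\GG$''. This conflates two levels. In the strict category $\Comp(\PSh(S,\QQ))$ the symmetry on $\QQ(1)\otimes\QQ(1)$ is the ordinary Koszul symmetry of the tensor product of complexes; the endomorphism $\epsilon$ attached to $u$ is a creature of the homotopy category $\DMt(S,\QQ)$ and simply does not appear at the strict level. The required $\Sigma_i\times\Sigma_r$-equivariance of the iterated bonding map $E_i(r)\to E_{i+r}$ follows directly from the Commutativity axiom of the $\NN$-graded monoid $(E_i)$: permuting the $\QQ(1)$-factors becomes, after applying $(c')^{\otimes r}$, a permutation of $E_1$-factors, which the commutative multiplication $\mu$ absorbs. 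So there is no obstruction to rigidify, and (4) plays no part in building the symmetric ring spectrum; your ``main obstacle'' paragraph is chasing a non-issue.

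Condition (4) enters only at the last step, and for a different reason: it is exactly what forces the ring spectrum $\E$, already sitting in $\DMt(S,\QQ)$, to be a \emph{Morel motive} and hence to lie in $\DMB(S)$. The mechanism is Lemma~\ref{lm:ring_Morel}: $\E$ is a Morel motive iff its unit satisfies $\eta\circ\epsilon=-\eta$. Twisting by $\QQ(1)[1]$ and using the stability isomorphism identifies $\eta$ with $\bar c'\in\tilde H^1(\GG,E_1)$ and $\eta\circ\epsilon$ with $u^*(\bar c')$, so the condition becomes $u^*(\bar c')=-\bar c'$. In short, (4) governs the passage from $\DMt$ to $\DMB$, not the commutative monoid structure.
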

The main difficulty of the above result is that the monoid structure on $E_i$ is defined at the level of complexes of pre-sheaves and not just in the homotopy category. Using this result we can prove (in Section 2) the existence of motivic ring  spectra representing several cohomology theories. First we prove that for any algebraic scheme $X$,
 defined over a field of characteristic zero, there is a motivic ring $\E_\FdR$ such that $\E_\FdR^{n,i}(X)\simeq F^iH^n_\dR(X)$ is the i-th step of the Hodge filtration of the de Rham cohomology of $X$ as defined by Deligne \cite{Del:TDH3}. Then we prove that the rigid cohomology of Berthelot is also represented by a motivic ring spectrum $\E_\rig$.  
As we already mentioned   the rigid syntomic cohomology of Besser is  defined using a kind of mapping cone complex whose components are differential graded algebras (namely it is the homotopy limit of the diagram in \ref{num:syntomicrecall}). Thus we cannot apply directly the above criterion since we would need to define a multiplication on  the cone compatible with that of its components. To go around this problem we prove that a homotopy limit of motivic ring spectra  is a motivic ring spectrum. Hence  the rigid syntomic cohomology can be represented by a motivic ring spectrum as claimed. 

As already mentioned, the existence of $\E_\syn$ allows us 
to naturally extend the rigid syntomic cohomology to singular schemes. By {\em devissage}, we show how to compute the syntomic cohomology of a semi-stable curve. We warn the reader that this is (probably) not the correct way to extend the cohomology to a semistable curve  in the perspective of the theory of $p$-adic $L$-functions. 

In passing we show some results about what we call the {\em absolute rigid cohomology} given by 
$$	
H^n_{\phi}(X,i):=\Hom_{D^b(\Fisoc)}(\un,R\Gamma(X)(i)[n])
$$
where $R\Gamma(X)$ is a complex of $F$-isocrystals such that $H^n(R\Gamma(X))=H_\rig^n(X)$, for $X$ a scheme over a perfect field $k$. 

\bigskip

The last application of the representability theorem of
 rigid syntomic cohomology is the existence of a natural theory
 of \emph{rigid syntomic coefficients} for $R$-schemes
 (Section \ref{sec:modules}).
Using the techniques of \cite[sec. 17]{CD3},
 we set up the theory of {\em rigid syntomic modules}:
 over any $R$-scheme $X$, they are modules (in a strict homotopical sense)
 over the ring spectrum $\E_{\syn,X}$
 obtained by pullback along the structural morphism of $X/R$.
 The corresponding category $\smod_X$ for various $R$-schemes $X$,
 shares many of the good properties of the category $\DMB$, 
 such as the complete Grothendieck six functors formalism.
 It receives a natural realization functor from $\DMB$,
  which is triangulated, monoidal (and commutes with $f^*$ and $f_!$).

This construction might be the main novelty
  of our representability theorem.
 However, to be complete we should relate these modules
 with more concrete categories of coefficients,
 probably related with $F$-isocrystals. This relation will be
 investigated in a future work.

\bigskip
\paragraph{\em Acknowledgments}
The authors are grateful to Joseph Ayoub, Amnon Besser, Bruno Chiarellotto,
Denis-Charles Cisinski, Andreas Langer, Victor Rotger and Georg Tamme, J\"org Wildeshaus for our initial motivation, their useful comments or stimulating conversations about their research in connection to this work. The second author is grateful to Michael Harris and the  ``Fondation Simone et Cino del Duca de l'Institut de France'' for the year spent in Jussieu.
%\tableofcontents

\section{Motivic homotopy theory}

In this section we first recall a basic construction of motivic homotopy theory,
 the category of Morel motives (Def. \ref{df:morel}) -- the reader is referred to
 \cite{CD3} for more details.
 Then we prove a criterion for the representability of a cohomology
 theory by a ring spectrum. This criterion is new and it generalizes an
 analogous result from \cite{CD2}.

Throughout this section, $S$ will be a base scheme,
 assumed to be noetherian finite dimensional and $\Lambda$
 will be a ring of coefficients.
We will denote by $\smx S$ either the category of smooth
 $S$-schemes of finite type or
 the category of such schemes which in addition are affine
 (absolutely). Note that equipped with the Nisnevich topology,
 the two induced topoi are equivalent. 

\subsection{The effective $\AA^1$-derived category}

\begin{num}
We let $\PSh(S,\Lambda)$ be the category of presheaves of $\Lambda$-modules
 on $\smx S$ and $\Comp(\PSh(S,\Lambda))$ the category of complexes
 of such presheaves. Given such a complex $K$, a smooth $S$-scheme $X$
 and an integer $n \in \ZZ$, we put:
$$
H^n(X,K):=H^n(K(X)).
$$
This is the cohomology of $K$ computed in the derived category of
 $\PSh(S,\Lambda)$: 
 if we denote by $\Lambda(X)$ the presheaf of $\Lambda$-modules represented by $X$,
 we get:
$$
H^n(X,K)=\Hom_{\Der(\PSh(S,\Lambda))}(\Lambda(X),K[n]).
$$

A closed pair will be a couple $(X,Z)$ such that $X$ is
 a smooth $S$-scheme
 and $Z$ is a closed subscheme of $X$
  -- in fact one requires that $X$ and $(X-Z)$ are in $\smx S$.
 We also define the $n$-th cohomology group of $(X,Z)$ 
 -- equivalently: of $X$ with support in $Z$ --
 with coefficients in $K$ as:
$$
H^n_Z(X,K):=H^{n-1}\big(\mathrm{Cone}(K(X) \rightarrow K(X-Z))\big).
$$
A morphism of closed pairs $f:(Y,T) \rightarrow (X,Z)$
 is a morphism of schemes $f:Y \rightarrow X$ such that
 $f^{-1}(Z) \subset T$. We say $f$ is \emph{excisive}  if it is étale, $f^{-1}(Z)=T$ and $f$ induces an isomorphism $T_{red}\to Z_{red}$.
 The cohomology groups $H^*_Z(X,K)$ are contravariant
 in $(X,Z)$ with respect to morphisms of closed pairs.
\end{num}
\begin{df}
Let $K$ be a complex of $\PSh(S,\Lambda)$.
\begin{enumerate}
\item We say that $K$ is \emph{Nis-local}
 if for any excisive morphism of closed pairs
  $f:(Y,T) \rightarrow (X,Z)$, the pullback morphism
$$
f^*:H_Z^*(X,K) \rightarrow H_T^*(Y,K)
$$
is an isomorphism.
\item We say that $K$ is \emph{$\AA^1$-local}
 if for any smooth $S$-scheme $X$,
 the pullback induced by the canonical projection $p$
 of the affine line over $X$
$$
p^*:H^*(X,K) \rightarrow H^*(\AA^1_X,K)
$$
is an isomorphism.
\end{enumerate}
Following Morel,
 we define the \emph{effective $\AA^1$-derived category
 over $S$ with coefficients in $\Lambda$}
 as the full subcategory of $\Der(\PSh(S,\Lambda))$ made
 by complexes which are Nis-local and $\AA^1$-local.
We will denote it by $\DMte(S,\Lambda)$.
\end{df}

\begin{num}
Let us recall the following facts on the category defined above:
\begin{enumerate}
\item Let $\Sh(S,\Lambda)$ be the category of sheaves of $\Lambda$-modules on
 $\smx S$ for the Nisnevich topology. 
 Then $\DMte(S,\Lambda)$ is equivalent to the $\AA^1$-localization
  of the derived category $\Der(\Sh(S,\Lambda))$,
  as defined in \cite[\S~1.1]{CD2}.

This comes from the fact that the pair of adjoint functors,
 whose left adjoint is the associated Nisnevich sheaf $a$,
 induces a derived adjunction
$$
a:\Der(\PSh(S,\Lambda)) \leftrightarrows \Der(\Sh(S,\Lambda)):\mathcal O
$$
whose right adjoint $\mathcal O$ is fully faithful
 with essential image the complexes which are Nis-local
 -- this is classical see for example \cite[5.2.10 and 5.2.13]{CD3}.
 In particular, Nis-local complexes can be described as
 those complexes $K$ which satisfy Nisnevich descent:
 for any Nisnevich hypercover $P_\bullet \rightarrow X$
 of any smooth $S$-scheme $X$, the induced map:
$$
K(X) \rightarrow \Tot\big(K(P_\bullet)\big)
$$
is a quasi-isomorphism -- the right hand-side
 is the total complex associated
 with the obvious double complex.
\item The fact that the category $\DMte(S,\Lambda)$ 
 can be handled in practice comes
 from its description as the homotopy category
 associated with an explicit model category structure on
 the category $\Comp(\PSh(S,\Lambda))$ of complexes on 
 the Grothendieck abelian category $\PSh(S,\Lambda)$:
\begin{itemize}
\item \emph{Weak equivalences}
 (also called weak $\AA^1$-equivalences)
 are the morphisms of complexes $f$
 such that for any complex $K$ which is $\AA^1$-local
 and Nis-local, $\Hom_{\Der(\PSh(S,\Lambda))}(f,K)$ is
  an isomorphism.
\item Fibrant objects are the complexes which are Nis-local
 and $\AA^1$-local. \emph{Fibrations} are the morphisms of complexes
 which are epimorphisms and whose kernel is fibrant. 
\end{itemize}
For the proof that this defines a model category,
 we refer the reader to \cite{CD1}: we first consider
 the model category structure associated with the
 Grothendieck abelian category $\PSh(S,\Lambda)$
 (see \cite[Ex. 2.3]{CD1})
 and we localize it with respect to Nisnevich hypercovers
 and $\AA^1$-homotopy (\cite[Section 4]{CD1}).
 Let us recall that a typical example of \emph{cofibrant objects}
 for this model structure are the presheaves of the form
 $\Lambda(X)$ for a smooth $S$-scheme $X$.

We derive from this model structure the existence
 of fibrant (resp. cofibrant) resolutions:
 associated with a complex of presheaves $K$,
 we get a fibrant $K_f$ (resp. cofibrant $K_c$) and a map
$$
K \rightarrow K_f\qquad \text{ (resp. } K_c \rightarrow K),
$$
which is a cofibration (resp. fibration)
 and a weak $\AA^1$-equivalence. 
 These resolutions can be chosen to be natural in $K$.

This can be used to derive functors.
 In particular, the natural tensor product $\otimes$
 of $\Comp(\PSh(S,\Lambda))$ as well as its internal complex morphism
 $\uHom$ can be derived using the formulas:
$$
K \otimes^\derL L=K_c \otimes L_c,\qquad
 \derR \uHom(K,L)=\uHom(K_c,L_f);
$$
see \cite[Sections 3 and
 4]{CD1}.\footnote{\label{fn:monoid_axiom} Note in particular that, 
 according to \cite[Proposition~4.11]{CD1},
 the model category described above is a monoidal
 model category which satisfies the monoid axiom.}
\end{enumerate}
\end{num}

\subsection{The $\AA^1$-derived category}

\begin{num} \label{num:graded_dga}
We define the \emph{Tate object} as the following complex
 of presheaves of $\Lambda$-modules:
\begin{equation} \label{eq:Tate}
\Lambda(1):=\mathrm{coKer}(\Lambda \xrightarrow{s_{1*}} \Lambda(\GG))[-1]
\end{equation}
where $s_1$ is the unit section of the group scheme $\GG$,
 considered as an $S$-scheme.
 Given a complex $K$ and an integer $i \geq 0$,
 we denote by $K(i)$ the tensor product of $K$ with
 the $i$-th tensor power of $\Lambda(1)$ (on the right).

As usual in the general theory of motives,
 one is led to invert the object $\Lambda(1)$ for the tensor product.
 In the context of motivic homotopy theory,
  this is done using the construction of spectra,
  borrowed from algebraic topology.

For any integer $i>0$,
 we will denote by $\Sigma_i$ the group of permutations
 of the set $\{1,\cdots,i\}$, $\Sigma_0=1$.
\end{num}
\begin{df} \label{df:tate_spectra}
A \emph{Tate spectrum} (over $S$ with coefficients in $\Lambda$),
 is a sequence $\E=(E_i,\sigma_i)_{i \in \NN}$
 such that:
\begin{itemize}
\item for each $i \in \NN$, $E_i$ is a complex of $\PSh(S,\Lambda)$
 equipped with an action of $\Sigma_i$,
\item for each $i \in \NN$, $\sigma_i$ is a morphism of complexes
$$
\sigma_i:E_i(1) \rightarrow E_{i+1},
$$
called the \emph{suspension map} (in degree $n$).
\item For any integers $i \geq 0$, $r>0$,
 the map induced by the morphisms $\sigma_i,\cdots,\sigma_{i+r}$:
$$
E_i(r) \rightarrow E_{i+r}
$$
is compatible with the action of $\Sigma_i \times \Sigma_r$,
 given on the left by the structural $\Sigma_i$-action
  on $E_i$ and the action of $\Sigma_r$ via the permutation
  isomorphism of the tensor structure on $\Comp(\PSh(S,\Lambda))$,
 and on the right via the embedding
  $\Sigma_i \times \Sigma_r \rightarrow \Sigma_{i+r}$
	 obtained by identifying the sets $\{1,...,i+r\}$ and
	 $\{1,...,i\} \sqcup \{1,...,r\}$.
\end{itemize}
A morphism of Tate spectra $f:\E \rightarrow \F$ is a sequence
 of  $\Sigma_i$-equivariant maps
  $(f_i:E_i \rightarrow F_i)_{n \in \NN}$ 
 compatible with the suspension maps.
 The corresponding category will be denoted by $\Sp(S,\Lambda)$.

A morphism $f$ as above is called a \emph{level weak equivalence}
 if for any integer $i \geq 0$, the morphism of complexes $f_i$
 is a quasi-isomorphism. We denote by $D_{\rm Tate}(S,\Lambda)$
 the localization of $\Sp(S,\Lambda)$ with respect to
 level weak equivalences (See \cite[Sec. 1.4]{CD2}).
\end{df}
Complexes and spectra are linked by a pair of adjoint functors
 $(\Sigma^\infty,\Omega^\infty)$ defined respectively for 
 a complex $K$ and a Tate spectrum $\E$ as follows:
\begin{equation} \label{eq:suspension}
\Sigma^\infty K:=(K(i))_{i \in \NN}\ , \qquad \Omega^\infty(\E)=E_0,
\end{equation}
where $K(i)$ is equipped with the action of $\Sigma_i$
 by its natural action  through the symmetry isomorphism
 of the tensor structure on $\Comp(\PSh(S,\Lambda))$.

\begin{num} \label{num:product_spectra}
The category of Tate spectra can be described using the category
 of symmetric sequences of $\Comp(\PSh(S,\Lambda))$: the objects
 of this category
 are the sequences $(E_i)_{i \in \NN}$ of complexes of $\PSh(S,\Lambda)$
 such that $E_i$ is equipped with an action of $\Sigma_i$.
 This is a Grothendieck abelian category on which on can construct
 a closed symmetric monoidal structure (see \cite[Section 7]{CD1}).
 Moreover, the obvious symmetric sequence 
$$
\mathrm{Sym}(\Lambda(1)):=(\Lambda(i))_{i \in \NN}
$$
 has a canonical structure of a commutative monoid.

The category $\Sp(S,\Lambda)$ is equivalent to the category of modules
 over $\mathrm{Sym}(\Lambda(1))$ (see again \emph{loc. cit.}).
 Therefore, it is formally a Grothendieck abelian category
 equipped with a closed symmetric monoidal structure.
 Note that the tensor product can be described by the following
 universal property:
 to give a morphism of Tate spectra $\mu:\E \otimes \F \rightarrow \G$
 is equivalent to give a family of morphisms
$$
\mu_{i,j}:E_i \otimes F_j \rightarrow G_{i+j}
$$
which is $\Sigma_i \times \Sigma_j$-equivariant
 and compatible with the suspension maps
 (see \emph{loc. cit.} Remark 7.2).
\end{num}

\begin{df}
Let $\E$ be a Tate spectrum over $S$ with coefficients in $\Lambda$.
\begin{enumerate}
\item We say that $\E$ is \emph{Nis-local}
 (resp. \emph{$\AA^1$-local})
 if for any integer $i \geq 0$, the complex $E_i$
 is Nis-local (resp. $\AA^1$-local).
\item We say that $\E$ is a \emph{Tate $\Omega$-spectrum}
 if the morphism of $\DMte(S,\Lambda)$ induced by adjunction
 from $\sigma_i$:
$$
E_i \rightarrow \derR \uHom(\Lambda(1),E_{i+1})
$$
is an isomorphism (i.e. a weak $\AA^1$-equivalence).
\end{enumerate}
For short, we say that $\E$ is \emph{stably fibrant}
 if it is an $\Omega$-spectrum which is Nis-local
 and $\AA^1$-local.

We define the \emph{$\AA^1$-derived category
 over $S$ with coefficients in $\Lambda$},
  denoted by $\DMt(S,\Lambda)$,
 as the full subcategory of $D_{\rm Tate}(S,\Lambda)$ made
 by the stably fibrant Tate spectra.
\end{df}

\begin{num} \label{num:recall_DM}
Recall the following facts
 on the previous construction:
\begin{enumerate}
\item The construction of $\DMt(S,\Lambda)$ through spectra
 is a classical construction derived from algebraic
 topology (see \cite{Hov}).
 In particular, the monoidal model structure on the category
 $\Comp(\PSh(S,\Lambda))$
 induces a canonical monoidal model structure on $\Sp(S,\Lambda)$ whose
 homotopy category is precisely $\DMt(S,\Lambda)$.
 It is called the \emph{stable model category}.

Therefore $\DMt(S,\Lambda)$ is a symmetric monoidal triangulated category
 with internal Hom. 
 Moreover, the adjoint functors \eqref{eq:suspension} 
 can be derived:
\begin{equation}\label{eq:sus_omega}
\sus:\DMte(S,\Lambda) \leftrightarrows \DMt(S,\Lambda):\Omega^\infty.
\end{equation}
The functor $\Sigma^\infty$ is monoidal.\footnote{In fact, 
 the homotopy category $\DMt(S,\Lambda)$ equipped with its left derived
 functor $\Sigma^\infty$, is universal for the property that
 $\Sigma^\infty$ is monoidal and $\Sigma^\infty(K(1))$ is
 $\otimes$-invertible (see again \cite{Hov}).}
Recall also that given a Tate $\Omega$-spectrum $\E$ as above
 and an integer $i\geq 0$, we get:
\begin{equation}\label{eq:omega}
\Omega^\infty(\E(i))=E_i.
\end{equation}
We will simply denote by $\Lambda$ or $\un$ the unit of $\DMt(S,\Lambda)$
 -- instead of $\sus \Lambda$.
\item In fact the triangulated categories of the form $\DMt(S,\Lambda)$ 
 for various schemes $S$
 are not
 only closed monoidal but they are equipped with the complete
 formalism of Grothendieck six operations 
$$
(f^*,f_*,f_!,f^!,\otimes,\uHom)
$$
as established by Ayoub in \cite{Ayoub}.\footnote{Ayoub
 treats only the case where $f$ is quasi-projective
 for the existence of the adjoint pair $(f_!,f^!)$.
 The general case can be obtained by using 
 the classical construction of Deligne as explained in
 \cite[section 2.2]{CD3}. The reader will also find
 a summary of the six operations formalism in \emph{loc. cit.}
 Theorem 2.4.50.} 
\end{enumerate}
\end{num}

\subsection{Triangulated mixed motives}

\begin{num} \label{num:Morel}
\underline{In this section, $\Lambda$ is a $\QQ$-algebra.}

We recall the construction of Morel
 for deriving the triangulated category of mixed motives
 from the category $\DMt(S,\Lambda)$ (see \cite[16.2]{CD3}
 for details).

Let us consider the inverse map $u$ of the multiplicative group
 scheme $\GG$, corresponding to the map:
$$
\mathcal O_S[t,t^{-1}] \rightarrow \mathcal O_S[t,t^{-1}]\ ,\qquad
 t \mapsto t^{-1}.
$$
Recall from formula \eqref{eq:Tate} the decomposition
 $\Lambda(\GG)=\Lambda \oplus \Lambda(1)[1]$, considered in $\DMt(S,\Lambda)$.
Given this decomposition,
 the map $u_*:\Lambda(\GG) \rightarrow \Lambda(\GG)$ can be written in matrix
 form as:
$$
\begin{pmatrix} 1 & 0 \\ 0 & \epsilon_1\end{pmatrix}
$$
Because $\Lambda(1)[1]$ is $\otimes$-invertible in $\DMt(S,\Lambda)$,
 there exists a unique endomorphism $\epsilon$ of $\Lambda$ in $\DMt(S,\Lambda)$
 such that $\epsilon_1=\epsilon(1)[1]$.

Because $u^2=1$, we get $\epsilon^2=1$. Thus we can define two
 complementary projectors in $\End_{\DMt(S,\Lambda)}(\Lambda)$:
$$
p_+=\frac 1 2.(1_\Lambda-\epsilon), \qquad p_-=\frac 1 2.(1_\Lambda+\epsilon).
$$
Given any object $\E$ in $\DMt(S,\Lambda)$, we deduce projectors
 $p_+ \otimes \E$, $p_- \otimes \E$ of $\E$. Because $\DMt(S,\Lambda)$
 is pseudo-abelian\footnote{This is, for example,
 an application of the fact it is a triangulated
 category with countable direct sums (cf \cite[1.6.8]{Nee}).}, 
 we deduce a canonical decomposition:
$$
\E=\E_+ \oplus \E_-
$$
where $\E_+$ (resp. $\E_-$) is the image of $p_+ \otimes E$
 (resp. $p_- \otimes E$).
 The following triangulated category was introduced by Morel.
\end{num}
\begin{df} \label{df:morel}
An object $\E$ in $\DMt(S,\Lambda)$ will be called a Morel
 motive if $\E_-=0$.
We denote by $\DMt(S,\Lambda)_+$ the full subcategory of $\DMt(S,\Lambda)$
 made by Morel motives.
\end{df}
Note that according to the above, the fact $\E$ is a Morel
 motive is equivalent to the property:
\begin{equation} \label{eq:Morel_mot}
\epsilon \otimes \E=-1_\E;
\end{equation}
in other words, $\epsilon$ acts as $-1$ on $\E$.

\begin{num}\label{num:DMB}
Recall the following facts, which legitimate
 the terminology of ``Morel motives":
\begin{enumerate}
\item Obviously,
 the category $\DMt(S,\Lambda)_+$ is a triangulated monoidal sub-category
 of $\DMt(S,\Lambda)$. Moreover, the six operations on $\DMt(-,\Lambda)$
 induce similar operations on $\DMt(-,\Lambda)_+$
 which satisfy all of the six functors formalism.
\item According to \cite[16.2.13]{CD3}, there is an equivalence
 of triangulated monoidal categories:
$$
\DMt(S,\Lambda)_+ \simeq \DMB(S,\Lambda)
$$
where $\DMB(S,\Lambda)$ is the triangulated category of Beilinson
 motives introduced in \cite[Def. 14.2.1]{CD3}.
In $\DMB(S,\Lambda)$, given a smooth $S$-scheme $X$,
 we simply denote by $M(X)$ the object corresponding to $\sus \Lambda(X)$
 and call it the \emph{motive} of $X$. \\
 Concretely, the above isomorphism means that when $S$ is regular,
 for any smooth $S$-scheme $X$ and any pair $(n,i) \in \ZZ^2$,
 one has a canonical isomorphism:
\begin{equation}
\Hom_{\DMt(S,\Lambda)_+}(\sus \Lambda(X),\Lambda(i)[n])
 \simeq K_{2i-n}^{(i)}(X) \otimes_\QQ \Lambda
\end{equation}
where $K_{2i-n}^{(i)}(X)$ denotes the $i$-th Adams subspace
 of the rational Quillen K-theory of $X$ in homological degree 
 $(2i-n)$.\footnote{This formula was first obtained by Morel but 
 the proof has not
 been published. In any case, this is a consequence of 
 \emph{loc. cit.}}

Note in particular that according to the coniveau spectral
 sequence in K-theory and a computation of Quillen,
 a particular case of the above isomorphism is the 
 following one:
\begin{equation}
\Hom_{\DMt(S,\Lambda)_+}(\sus \Lambda(X),\Lambda(n)[2n])
 \simeq CH^n(X) \otimes_\ZZ \Lambda\ ,
\end{equation}
where the right hand side is the Chow group of $n$-codimensional
 $\Lambda$-cycles in $X$
  ($S$ is still assumed to be regular).
\end{enumerate}
\end{num}
\subsection{Ring spectra}
\begin{num} \label{num:monoid}
Recall that a commutative monoid in a symmetric monoidal category
 $(\mathcal M,\otimes,\un)$ is an object $M$,
 a unit map $\eta:\un \rightarrow M$ and a multiplication
map $\mu:M \otimes M \rightarrow M$, such that the following diagrams
are commutative:
$$
\xymatrix@R=10pt{
\ar@{}|{\text{Unit:}}[r] &&\ar@{}|{\text{Associativity:}}[r]
&&\ar@{}|{\text{Commutativity:}}[r] & \\
M\ar^-{1 \otimes \eta}[r]\ar@{=}[rdd] & M \otimes M\ar^\mu[dd]
 & M \otimes M \otimes M\ar^-{1 \otimes \mu}[r]\ar_{\mu \otimes 1}[dd]
  &  M \otimes M\ar^\mu[dd]
 &  M \otimes M\ar^\mu[rd]\ar_\gamma[dd] & \\
& & & & & M \\
& M
 & M \otimes M\ar^-\mu[r] & M
 & M \otimes M\ar_\mu[ru]
}
$$
where $\gamma$ is the obvious symmetry isomorphism.
\end{num}

\begin{df}
A \emph{weak ring spectrum} (resp. \emph{ring spectrum}) $\E$ over $S$
 is a commutative monoid 
 in the symmetric monoidal category $\DMt(S,\Lambda)$
 (resp. $\Sp(S,\Lambda)$).\footnote{Ring spectra have slowly emerged
  in homotopy theory and the terminology is not fixed. Usually,
  our weak ring spectra (resp. ring spectra)
  are simply called ring spectra (resp. highly structured
  ring spectra).}
\end{df}

\begin{num}
A spectrum $\E$ in $\DMt(S,\Lambda)$ defines a bigraded cohomology theory 
 on smooth $S$-schemes $X$ by the formula:
$$
\E^{n,i}(X)=\Hom_{\DMt(S,\Lambda)}(\sus \Lambda(X),\E(i)[n]).
$$
The structure of a weak ring spectrum on $\E$ corresponds to a product 
 in cohomology, usually called the cup-product and defined as follows: 
 given cohomology classes,
$$
\alpha:\sus \Lambda(X) \rightarrow \E(i)[n],
 \qquad \beta:\sus \Lambda(X) \rightarrow \E(j)[m]
$$
one defines the class $\alpha \cupp \beta$ as the following
composite:
$$
\sus \Lambda(X) \xrightarrow{\delta_*}
 \sus \Lambda(X) \otimes \sus \Lambda(X)
 \xrightarrow{\alpha \otimes \beta} \E(i)[n] \otimes \E(j)[m]
 \xrightarrow{\mu} \E(i+j)[n+m]. 
$$
Using this definition, one can check easily 
 that the commutativity axiom of $\E$ implies the following
 formula:
\begin{equation*} \label{eq:product}
\alpha \cupp \beta=(-1)^{nm-ij}.\epsilon^{ij}.\beta \cupp \alpha
\end{equation*}
where $\epsilon$ is the endomorphism of $\Lambda$ introduced in Paragraph
 \ref{num:Morel}. In particular, if $\E$ is a Morel motive,
 the product on $\E^{**}$ is anti-commutative
 with respect to the first index and commutative with respect
 to the second one. Note also the following result which will
 be used later.
\end{num}
\begin{lm} \label{lm:ring_Morel}
Let $\E$ be a weak ring spectrum  with unit $\eta$
 and multiplication $\mu$.
 Then the following conditions are equivalent:
\begin{enumerate}
\item[(i)] $\E$ is a Morel motive.
\item[(ii)] $\eta \circ \epsilon=-\eta$.
\end{enumerate}
\end{lm}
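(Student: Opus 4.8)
The plan is to unwind the characterization \eqref{eq:Morel_mot} of Morel motives and to compare the action of $\epsilon$ on $\E$ with its value on the unit object. Recall that any endomorphism $a$ of $\un$ in $\DMt(S,\Lambda)$ gives rise to a ``multiplication by $a$'': a natural endo-transformation of the identity functor whose value at an object $X$ is the composite $X\xrightarrow{\sim}\un\otimes X\xrightarrow{a\otimes\id_X}\un\otimes X\xrightarrow{\sim}X$, the outer arrows being the unit constraints. By coherence of the unit constraints this transformation takes the value $a$ at $\un$ and satisfies $a_{X\otimes Y}=a_X\otimes\id_Y$; write $\epsilon_X$ for its value at $X$ in the case $a=\epsilon$. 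With this notation \eqref{eq:Morel_mot} says precisely that $\E$ is a Morel motive if and only if $\epsilon_\E=-\id_\E$, and in what follows I freely suppress the canonical isomorphism $\un\otimes\E\simeq\E$.

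For the implication (i)$\Rightarrow$(ii) I would apply the naturality of $\epsilon_{(-)}$ to the unit map $\eta:\un\to\E$, which gives $\epsilon_\E\circ\eta=\eta\circ\epsilon_\un=\eta\circ\epsilon$; if $\E$ is a Morel motive the left-hand side equals $-\eta$, whence $\eta\circ\epsilon=-\eta$.

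For (ii)$\Rightarrow$(i) the same naturality already yields $\epsilon_\E\circ\eta=\eta\circ\epsilon=-\eta$, so the remaining task is to upgrade this identity of morphisms $\un\to\E$ to the identity $\epsilon_\E=-\id_\E$, and this is the one place where the ring structure intervenes. By the unit axiom of the commutative monoid $\E$ one has $\mu\circ(\eta\otimes\id_\E)=\id_\E$. Applying $\epsilon_\E$ to both sides, then using the naturality of $\epsilon_{(-)}$ at $\mu$ together with $\epsilon_{\E\otimes\E}=\epsilon_\E\otimes\id_\E$, and finally the bifunctoriality of $\otimes$, one obtains
\[
\epsilon_\E=\epsilon_\E\circ\mu\circ(\eta\otimes\id_\E)=\mu\circ(\epsilon_\E\otimes\id_\E)\circ(\eta\otimes\id_\E)=\mu\circ\big((\epsilon_\E\circ\eta)\otimes\id_\E\big).
\]
Substituting $\epsilon_\E\circ\eta=-\eta$ gives $\epsilon_\E=-\,\mu\circ(\eta\otimes\id_\E)=-\id_\E$, so $\E$ is a Morel motive.

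I expect no genuine obstacle: the argument is purely formal and works verbatim in any symmetric monoidal category. The only point requiring care is the bookkeeping of the unit constraints and the routine coherence facts about the natural transformation ``multiplication by $\epsilon$'' (namely $\epsilon_\un=\epsilon$, its naturality, and $\epsilon_{X\otimes Y}=\epsilon_X\otimes\id_Y$); and it is worth emphasising that the commutative monoid structure on $\E$ is used only in the implication (ii)$\Rightarrow$(i), precisely to recover the $\epsilon$-action on all of $\E$ from its value on the unit section $\eta$.
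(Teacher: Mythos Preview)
Your proof is correct and follows essentially the same approach as the paper's. The paper compresses both directions into the pair of identities $\mu\circ(1_\E\otimes\eta)=1_\E$ and $\mu\circ(1_\E\otimes(\eta\circ\epsilon))=\epsilon\otimes\E$, from which the equivalence is immediate; your version uses the left unit axiom $\mu\circ(\eta\otimes 1_\E)=1_\E$ instead and spells out the naturality of ``multiplication by $\epsilon$'' more explicitly, but the content is the same.
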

\begin{proof}
Let us remark that according to the Unit property
 the following equalities hold:
\begin{align*}
\mu \circ (1_\E \otimes \eta)&=1_\E, \\
\mu \circ (1_\E \otimes (\eta \circ \epsilon))&=\epsilon \otimes \E.
\end{align*}
Thus the equivalence between (i) and (ii)
 directly follows from relation \eqref{eq:Morel_mot}
 characterizing Morel motives.
\end{proof}

\begin{rem}
Of course, a ring spectrum induces a weak ring spectrum.
 Concretely, in the non weak case, one requires
 that the diagrams of Paragraph \ref{num:monoid} commutes
 in the mere category of spectra, and not only up to weak homotopy.
 This makes the construction of ring spectra
 more difficult than usual weak ring spectra.
\end{rem}

\begin{num} \label{num:ring_spectra}
Let us denote by $\Spr(S,\Lambda)$ the category of ring spectra.
Because the category $\Sp(S,\Lambda)$ is a complete and cocomplete
monoidal category, $\Spr(S,\Lambda)$ is complete and cocomplete.
Moreover, the forgetful functor:
$$
U:\Spr(S,\Lambda) \rightarrow \Sp(S,\Lambda)
$$
admits a left adjoint which we denote by $F$.
The following result appears in \cite[Th. 7.1.8]{CD3}.
\end{num}
\begin{thm} \label{thm:model_ring_sp}
Assume $\Lambda$ is a $\QQ$-algebra.

Then the category $\Spr(S,\Lambda)$ is a model category
 whose weak equivalences (resp. fibrations)
 are the maps $f$ such that $U(f)$ is a weak equivalence
 (resp. stable fibration) in the stable model category
 $\Sp(S,\Lambda)$ (see Par. \ref{num:recall_DM}).

We denote by $\Ho(\Spr(S,\Lambda))$ the homotopy category
 associated with this model category.
\end{thm}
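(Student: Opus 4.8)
The plan is to obtain the model structure on $\Spr(S,\Lambda)$ by transfer (lifting) along the free–forgetful adjunction $F:\Sp(S,\Lambda)\leftrightarrows\Spr(S,\Lambda):U$ of Paragraph \ref{num:ring_spectra}. Since $\Sp(S,\Lambda)$ is cofibrantly generated, with generating cofibrations $I$ and generating trivial cofibrations $J$, since $\Spr(S,\Lambda)$ is complete and cocomplete (as already noted), and since $U$ preserves filtered colimits (commutative monoids in a locally presentable monoidal category form a locally presentable category, and the forgetful functor creates filtered colimits), the standard transfer criterion for cofibrantly generated model categories reduces everything to one point: the \emph{acyclicity condition}, namely that $U$ sends every relative $F(J)$-cell complex in $\Spr(S,\Lambda)$ to a weak equivalence of $\Sp(S,\Lambda)$. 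Granting this, the transferred structure exists, its weak equivalences and fibrations are exactly the maps $f$ with $U(f)$ a weak equivalence (resp. stable fibration), and $\Ho(\Spr(S,\Lambda))$ is well defined.

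First I would record the two structural inputs. The stable model category $\Sp(S,\Lambda)$ is a cofibrantly generated symmetric monoidal model category satisfying the \emph{monoid axiom}: being built from $\Comp(\PSh(S,\Lambda))$, which satisfies the monoid axiom by \cite[Proposition 4.11]{CD1} (see footnote \ref{fn:monoid_axiom}), through the passage to modules over $\Sym(\Lambda(1))$ and the subsequent $\AA^1$- and stable Bousfield localizations, it again satisfies it; concretely, the class of maps obtained from $\{u\otimes K : u\in J,\ K\ \text{a spectrum}\}$ by cobase change and transfinite composition consists of weak equivalences. Second — and this is where the hypothesis enters decisively — $\Lambda$ is a $\QQ$-algebra, so each group ring $\QQ[\Sigma_n]$ is semisimple; consequently the $\Sigma_n$-coinvariants functor on symmetric sequences of complexes is exact, agrees with $\Sigma_n$-invariants, and sends a $\Sigma_n$-equivariant trivial cofibration to a trivial cofibration (its value is a retract of the underlying object). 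This is exactly what makes the free \emph{commutative} monoid functor $\Sym(E)=\bigoplus_{n\geq 0}(E^{\otimes n})_{\Sigma_n}$ behave, homotopically, like the free associative one.

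For the acyclicity condition I would use the usual filtration of a pushout of a free map of commutative monoids. Let $j:A\to B$ lie in $J$ and let $R$ be any commutative monoid equipped with a map $F(A)\to R$; form the pushout $R'=R\otimes_{F(A)}F(B)$ in $\Spr(S,\Lambda)$. There is a functorial filtration $R=R_0\to R_1\to R_2\to\cdots$ with $\colim_n R_n=R'$, in which $R_{n-1}\to R_n$ is a cobase change, along a map to $R_{n-1}$, of $R$ tensored with the $\Sigma_n$-coinvariants of the $n$-fold iterated pushout-product of $j$ with itself — the map from the punctured-cube colimit $Q_n$ to $B^{\otimes n}$. Since $j$ is a trivial cofibration, this $\Sigma_n$-equivariant pushout-product power $Q_n\to B^{\otimes n}$ is a trivial cofibration; by semisimplicity of $\QQ[\Sigma_n]$ its $\Sigma_n$-coinvariants is still a trivial cofibration; tensoring with $R$ and invoking the monoid axiom, each $R_{n-1}\to R_n$ is a weak equivalence, hence so is $R\to R'=\colim_n R_n$. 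A general relative $F(J)$-cell complex is a transfinite composition of such pushouts, and closure of the relevant class of weak equivalences under transfinite composition is again supplied by the monoid axiom. This establishes the acyclicity condition, and the theorem follows.

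The main obstacle is precisely this last paragraph: controlling $\Sym$ through the pushout filtration and checking that the symmetrization steps preserve acyclicity. In full generality they do not — one only obtains a semi-model structure on commutative monoids (cf.\ the positive model structure on commutative ring spectra) — and the argument goes through here \emph{only} because the coefficients are rational, which trivializes the $\Sigma_n$-equivariance obstruction. Everything else is formal: completeness and cocompleteness of $\Spr(S,\Lambda)$, the small object argument for $F(I)$ and $F(J)$ (automatic by local presentability), and the identification of the transferred weak equivalences and fibrations with those detected by $U$.
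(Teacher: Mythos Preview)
The paper does not give its own proof of this theorem: it is stated with the attribution ``The following result appears in \cite[Th. 7.1.8]{CD3}'' and no further argument. So there is nothing to compare against at the level of the present paper.

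Your sketch is correct and is precisely the standard route by which such results are established (and, in substance, how \cite[Th. 7.1.8]{CD3} proceeds): transfer the stable model structure along the free--forgetful adjunction $F\dashv U$, reduce to the acyclicity condition for relative $F(J)$-cells, and discharge that condition using the Schwede--Shipley/White pushout filtration of $R\to R\amalg_{F(A)}F(B)$ together with the monoid axiom and the rationality hypothesis. You have correctly isolated the one non-formal point: without $\QQ\subset\Lambda$ the $\Sigma_n$-coinvariants step fails and one only gets a semi-model structure (or must pass to a positive model structure). The inputs you invoke --- cofibrant generation and the monoid axiom for $\Sp(S,\Lambda)$, inherited from $\Comp(\PSh(S,\Lambda))$ via \cite[Prop. 4.11]{CD1} and the passage to $\Sym(\Lambda(1))$-modules with subsequent Bousfield localizations --- are the right ones, and the paper already records the monoid axiom for the underlying category (footnote \ref{fn:monoid_axiom}).
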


\begin{num}\label{num:homotopy_limits} For a given $\QQ$-algebra $\Lambda$,
 recall the following consequences of this theorem:
\begin{enumerate}
\item The pair of adjoint functors $(F,U)$ can be derived
 and induces adjoint functors:
$$
\derL F:\DMt(S,\Lambda) \leftrightarrows \Ho(\Spr(S,\Lambda)):U
$$
The essential image of the functor $U$ lies in the category
 of weak ring spectra. However, it is not essentially
 surjective on that category.
\item As any homotopy category of a model category, 
 the homotopy category $\Ho(\Spr(S,\Lambda))$
 admits homotopy limits and colimits
 (see \cite[Intro. Th. 1]{Cis}). In other words,
 any diagram of $\Spr(S,\Lambda)$ admits a homotopy limit
 and a homotopy colimit.
\end{enumerate}
\end{num}

%We finish this section with the following criterion
% of representability which is a generalization of the 
% construction of \cite[Sec. 2.1]{CD2}.
\begin{num} \label{num:axioms_comm_monoid}
A commutative monoid in the category
 $\Comp(\PSh(S,\Lambda))$ is usually called a commutative differential
 graded $\Lambda$-algebra with coefficients in the abelian monoidal category
 $\PSh(S,\Lambda)$.

A $\NN$-graded commutative monoid in $\Comp(\PSh(S,\Lambda))$
 is a sequence $(E_i)_{i \in \NN}$ of complexes of pre\-shea\-ves
 equipped with a unit map $\eta:\Lambda \rightarrow E_0$
 and multiplication maps $\mu_{ij}:E_i \otimes E_j \rightarrow E_{i+j}$
 for any pair of integers $(i,j)$ such that the following diagrams
 commute:
$$
\xymatrix@R=10pt{
\ar@{}|{\text{Unit:}}[r] &&\ar@{}|{\text{Associativity:}}[r]
&&\ar@{}|{\text{Commutativity:}}[r] & \\
E_i\ar^-{1 \otimes \eta}[r]\ar@{=}[rdd] & E_i \otimes E_0\ar^{\mu_{i,0}}[dd]
 & E_i \otimes E_j \otimes E_k\ar^-{1 \otimes \mu_{jk}}[r]
    \ar_{\mu_{ij} \otimes 1}[dd]
  &  E_i \otimes E_{j+k}\ar^{\mu_{i,j+k}}[dd]
 &  E_i \otimes E_j\ar^{\mu_{ij}}[rd]\ar_{\gamma_{ij}}[dd] & \\
& & & & & E_{i+j} \\
& E_i
 & E_{i+j} \otimes E_k\ar^-{\mu_{i+j,k}}[r] & E_{i+j+k}
 & E_j \otimes E_i\ar_{\mu_{j,i}}[ru]
}
$$
where $\gamma_{ij}$ is the obvious symmetry isomorphism.
We then define bigraded cohomology groups
 for any smooth $S$-scheme $X$ and any couple of integers $(n,i)$:
$$
H^n(X,E_i)=H^n(E_i(X)).
$$
The above monoid structure induces an exterior product
 on these cohomology groups:
$$
H^n(X,E_i) \otimes H^m(Y,E_j)
 \rightarrow H^{n+m}(X \times_S Y,E_{i+j}),\quad 
  (x,y) \mapsto x \times y.
$$
Given any smooth $S$-scheme $X$,
 we let $p:X \times \GG \rightarrow X$ be the canonical projection
 and consider for the next statement the following split exact sequence:
$$
0 \rightarrow H^n(X ,E_i)
 \xrightarrow{p^*} H^n(X \times \GG,E_i)
 \xrightarrow{\pi_X} \tilde H^n(X \times \GG,E_i)
 \rightarrow 0\ ,
$$
where $\tilde H^n(X \times \GG,E_i):=\Coker(p^*)$ and $\pi_X$ is the canonical projection.
\end{num}
\begin{prop}\label{prop:ring_spectrum_existence}
Suppose given a $\NN$-graded commutative monoid
 $(E_i)_{i \in \NN}$ in $\Comp(\PSh(S,\Lambda))$ 
 as above together with a section $c$ of $E_1[1]$ over $\GG$
 satisfying the following properties:
\begin{enumerate}
\item{\emph{Excision}.--} For any integer $i$, $E_i$ is Nis-local.
\item{\emph{Homotopy}.--}  For any integer $i$, $E_i$ is $\AA^1$-local.
\item{\emph{Stability}.--} Let $\bar c$ be the image of $c$ in $H^1(\GG,E_1)$.
For any smooth $S$-scheme $X$ and any pair
 of integers $(n,i)$ the following map
$$
H^n(X,E_i) \rightarrow \tilde H^{n+1}(X \times \GG,E_{i+1}),\quad
 x \mapsto \pi_X\big(x \times \bar c\big)
$$
is an isomorphism.
\end{enumerate}
Then there exists a ring spectrum $\E$ which is 
 a stably fibrant Tate spectrum together with
 canonical isomorphisms
\begin{equation} \label{eq:compute_rep}
\Hom_{\DMt(S,\Lambda)}(\sus \Lambda(X),\E(i)[n]) \simeq H^n(X,E_i)
\end{equation}
for integers $(n,i) \in \ZZ \times \NN$,
functorial in the smooth $S$-scheme $X$
 and compatible with products. 
Moreover, $\E$ depends functorially
 on $(E_i)_{i\in \NN}$ and $c$.

Assume $\Lambda$ is a $\QQ$-algebra.
Let $u:\GG \rightarrow \GG$ be the inverse map of the group scheme $\GG$,
 and denote by $\bar c'$ the image of $c$ in the group
 $\tilde H^1(\GG,E_1)$.
Then, under the above assumptions,
 the following conditions are equivalent:
\begin{enumerate}
\item[(i)] The Tate spectrum $\E$ is a Morel motive
 (\emph{i.e.} defines an object in $\DMB(S,\Lambda)$,
 Def. \ref{df:morel} and Par. \ref{num:DMB}).
\item[(ii)] The following equality holds in $\tilde H^1(\GG,E_1)$:
$u^*(\bar c')=-\bar c'$.
\end{enumerate}
\end{prop}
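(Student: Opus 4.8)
The plan is to produce the ring spectrum $\E$ by hand as a commutative monoid in $\Sp(S,\Lambda)$, read off its represented cohomology from the $\Omega$-spectrum property, and then extract the Morel criterion from Lemma~\ref{lm:ring_Morel}.

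\emph{Construction of $\E$.} Using the splitting $\Lambda(\GG)=\Lambda\oplus\Lambda(1)[1]$ of \eqref{eq:Tate} (unit section plus structural projection), the section $c$, viewed as a morphism of complexes $\Lambda(\GG)\to E_1[1]$, restricts on the second summand to a morphism of complexes $\theta\colon\Lambda(1)\to E_1$ whose class in $\Hom_{\DMte(S,\Lambda)}(\Lambda(1),E_1)\cong\tilde H^1(\GG,E_1)$ is $\bar c'$. I then take the Tate spectrum with $\E_i:=E_i$ (trivial $\Sigma_i$-actions) and suspension maps $\sigma_i:=\mu_{i,1}\circ(1_{E_i}\otimes\theta)\colon E_i(1)\to E_{i+1}$, and promote it to a commutative monoid in $\Sp(S,\Lambda)$ using the unit $\eta$ and the multiplications $\mu_{ij}$ of the $\NN$-graded monoid $(E_i)$. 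The one substantial verification is that the $\Sigma_i\times\Sigma_r$-equivariances required by Definition~\ref{df:tate_spectra} and by the monoid axioms hold: the iterated structure map $E_i(r)\to E_{i+r}$ is $\mu^{(r)}\circ(1_{E_i}\otimes\theta^{\otimes r})$, where $\theta^{\otimes r}$ intertwines the Koszul $\Sigma_r$-action on $\Lambda(1)^{\otimes r}$ with the Koszul $\Sigma_r$-action on $E_1^{\otimes r}$ (naturality of the symmetry of $\Comp(\PSh(S,\Lambda))$) while $\mu^{(r)}$ is $\Sigma_r$-invariant by commutativity of $(E_i)$; the compatibility of the multiplications $\mu_{ij}$ with the $\sigma$'s follows likewise from associativity. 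I expect this to be the main obstacle: the whole monoid structure must be handled strictly, at the level of complexes of presheaves and not merely in the homotopy category --- exactly the difficulty announced in the introduction --- and the Koszul-sign bookkeeping is the same as in \cite{CD2}, now carried out in the graded situation.

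\emph{Stable fibrancy and the comparison isomorphism.} By the Excision and Homotopy hypotheses each $E_i$ is Nis-local and $\AA^1$-local, hence fibrant in $\DMte(S,\Lambda)$. The morphism $E_i\to\derR\uHom(\Lambda(1),E_{i+1})$ adjoint to $\sigma_i$ induces, on $\Hom_{\DMte}(\Lambda(X),-[n])$, precisely the map $H^n(X,E_i)\to\tilde H^{n+1}(X\times\GG,E_{i+1})$, $x\mapsto\pi_X(x\times\bar c)$, because $\sigma_i=\mu_{i,1}\circ(1_{E_i}\otimes\theta)$ computes exterior multiplication by $\bar c$; by the Stability hypothesis this is an isomorphism for every smooth $X$ and every $n$, so, source and target being fibrant, the adjoint morphism is a weak $\AA^1$-equivalence. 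Thus $\E$ is a stably fibrant $\Omega$-spectrum, whence $\derR\Omega^\infty(\E(i))=E_i$ by \eqref{eq:omega}, and the adjunction $(\sus,\Omega^\infty)$ --- with $\Lambda(X)$ cofibrant and $E_i$ fibrant --- yields the canonical identification $\Hom_{\DMt(S,\Lambda)}(\sus\Lambda(X),\E(i)[n])\simeq\Hom_{\DMte}(\Lambda(X),E_i[n])=H^n(X,E_i)$, functorial in $X$. Multiplicativity of this identification, and the functorial dependence of $\E$ on $(E_i)$ and $c$, are built into the construction of the multiplication of $\E$ out of the $\mu_{ij}$.

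\emph{The Morel criterion.} Assume now $\Lambda$ is a $\QQ$-algebra. By Lemma~\ref{lm:ring_Morel}, $\E$ is a Morel motive if and only if $\eta\circ\epsilon=-\eta$, where $\eta\colon\un\to\E$ is the unit. Specializing the comparison isomorphism to $(X,n,i)=(S,0,0)$ and composing with the Stability isomorphism $H^0(S,E_0)\xrightarrow{\sim}\tilde H^1(\GG,E_1)$, $1\mapsto\bar c'$, produces an isomorphism $\Hom_{\DMt}(\un,\E)=\E^{0,0}(S)\cong\tilde H^1(\GG,E_1)$ carrying $\eta$ to $\bar c'$ (by construction the degree-$1$ component of $\eta\colon\un=\Sym(\Lambda(1))\to\E$ is $\theta$, whose class is $\bar c'$). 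Under this isomorphism, precomposition with the endomorphism $\epsilon$ of $\un$ becomes precomposition with $\epsilon(1)$ on $\Lambda(1)$, which --- by the description in Paragraph~\ref{num:Morel} of $u_*$ on $\Lambda(\GG)=\Lambda\oplus\Lambda(1)[1]$ --- is exactly the endomorphism inducing $u^*$ on $\tilde H^1(\GG,E_1)$. Hence $\eta\circ\epsilon$ corresponds to $u^*(\bar c')$ and $-\eta$ to $-\bar c'$, so $\eta\circ\epsilon=-\eta$ if and only if $u^*(\bar c')=-\bar c'$; combined with Lemma~\ref{lm:ring_Morel}, this gives the equivalence (i) $\Leftrightarrow$ (ii). This last part is short; the weight of the proof is entirely in the strict construction of the first paragraph.
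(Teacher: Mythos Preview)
Your proof is correct and follows essentially the same route as the paper's: the map you call $\theta$ is precisely the paper's $c'$, the suspension $\sigma_i=\mu_{i,1}\circ(1\otimes\theta)$ is identical, the $\Sigma_i\times\Sigma_r$-equivariance is checked via the commutativity axiom, stable fibrancy is read off from hypotheses (1)--(3), and the Morel criterion is reduced to Lemma~\ref{lm:ring_Morel} by identifying $\eta$ with $\bar c'$ under the stability isomorphism $\Hom_{\DMte}(\Lambda,E_0)\to\tilde H^1(\GG,E_1)$. The only cosmetic difference is that you phrase the last step via the degree-$1$ component of $\eta\colon\Sym(\Lambda(1))\to\E$, whereas the paper invokes the twisting isomorphism $\Hom_{\DMt}(\Lambda,\E)\xrightarrow{\sim}\Hom_{\DMt}(\Lambda(1),\E(1))$; these are the same map.
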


\begin{rem} \label{rem:axioms_motivic_ring_sp}
\begin{enumerate}
\item The two last properties should be called
 the \emph{Orientation} property. In fact, they can be reformulated
 by saying that $\E$ is an oriented ring spectrum
  (cf \cite[Cor. 14.2.16]{CD3}). Recall also this is equivalent
  to the existence of a canonical morphism of groups:
$$
\Pic(X) \rightarrow H^2(X,E_1)
$$
which is functorial in $X$ (and even uniquely determined by $c$).
\item The Stability axiom can be reformulated by
 saying that for any $x \in H^{n+1}(X \times \GG,E_{i+1})$
 there exists a unique couple 
 $(x_0,x_1) \in H^{n+1}(X,E_{i+1}) \times H^n(X,E_i)$
 such that:
$$
x=p^*(x_0)+x_1 \times \bar c'.
$$
\item Though we start with a positively graded complex $(E_i)_{i \in \NN}$
 we get a cohomology theory which possibly has negative twists.
 These negative twists are given by the following short exact sequence
 for $i>0$:
$$
0 \rightarrow \E^{n,-i}(X)
 \rightarrow H^n(X \times \GG^{i},E_0)
 \rightarrow H^n(X \times \GG^{i-1},E_0)
 \rightarrow 0
$$
 where the epimorphism is given by the sum of the inclusions 
$$
\GG^{i-1} \rightarrow \GG^{i}
$$
corresponding to set one of the coordinates of the target to $1$.
\end{enumerate}
\end{rem}

\begin{proof}
We define the Tate spectrum $\E$ to be the complex of presheaves
 $E_i$ in degree $i$ with trivial action of $\Sigma_i$.
 The section $c$ defines a map of presheaves:
$$
c':\Lambda(1) \rightarrow \Lambda(\GG)[-1] \xrightarrow{c[-1]} E_1
$$
where the first map is given by the canonical inclusion.
We define the suspension map of $\E$ in degree $i$ as the following
composite:
$$
\sigma_i:E_i(1)=E_i \otimes \Lambda(1)
 \xrightarrow{1 \otimes c'} E_i \otimes E_1
 \xrightarrow{\mu_{i,1}} E_{i+1}.
$$
One deduces from the commutative diagram
 called ``Commutativity'' of Paragraph \ref{num:axioms_comm_monoid}
  that the induced map
 $E_i(r) \rightarrow E_{i+r}$
 is $\Sigma_i \times \Sigma_r$-equivariant.
 So that $\E$ is indeed a Tate spectrum.

By definition, Assumptions (1) and (2) exactly say that $\E$
 is Nis-local and $\AA^1$-local. It remains to check it
 is an $\Omega$-spectra. In other words, the map obtained by adjunction
 from $\sigma_i$
$$
\sigma'_i:E_i \rightarrow \derR \uHom(\Lambda(1),E_{i+1})
$$
is an isomorphism in $\DMte(S,\Lambda)$. It is sufficient to check
 that for any smooth $S$-scheme $X$ and any integer $n \in \ZZ$,
 the induced map
 :
\begin{align*}
\sigma'_{i*}:
\Hom(\Lambda(X),E_i[n])
 \rightarrow 
&\Hom(\Lambda(X),\derR \uHom(\Lambda(1),E_{i+1}[n]) \\
&=\Hom(\Lambda(X) \otimes \Lambda(1),E_{i+1}[n]),
\end{align*}
 where the morphisms are taken in $\DMte(S,\Lambda)$,
 is an isomorphism.
 According to the definition, we can compute this map as follows:
\begin{equation}\label{eq:stab_iso}
\Hom\big(\Lambda(X),E_i[n]\big)
 \rightarrow \Hom\big(\Lambda(X) \otimes \Lambda(1),E_{i+1}[n]\big),
  x \mapsto x \times \bar c'
\end{equation}
where $\bar c'$ is the class of the map $c'$ in $\DMte(S,\Lambda)$.
Using the fact $E_i$ is Nis-local and $\AA^1$-local,
 the source of this map is isomorphic to $H^n(X,E_i)$.
 Similarly, the group of morphisms
$$
\Hom\big(\Lambda(X) \otimes \Lambda(\GG),E_{i+1}[n+1]\big)
$$
 is isomorphic to $H^{n+1}(X \times \GG,E_{i+1})$.
 Under this isomorphism, the target of the above map
  corresponds to $\tilde H^{n+1}(X \times \GG,E_{i+1})$.
 Under these identifications, $\bar c'=\pi_X(\bar c)$.
 Thus, the fact $\sigma'_i$ is an isomorphism
 directly follows from Assumption (3).

According to this construction, 
the maps $\eta$ and $\mu_{ij}$ induces a structure
of a ring spectrum on $\E$ (using in particular the description
 of the tensor product of spectra recalled
 in Paragraph \ref{num:product_spectra}).

The isomorphism \eqref{eq:compute_rep} follows using
 the adjunction \eqref{eq:sus_omega} and the relation
 \eqref{eq:omega} applied to the Tate $\Omega$-spectrum $\E$.
 The fact it is functorial and compatible with products
 is obvious from the above construction.

\bigskip

Let us finally consider the remaining assertion.
Note that according to what was just said,
 the class $\bar c'$ introduced in the beginning of the proof
 coincides with the class $\bar c'$ which appears
 in the statement of the proposition.
Under the isomorphism \eqref{eq:compute_rep},
 the canonical isomorphism:
$$
\Hom_{\DMt(S,\Lambda)}(\Lambda,\E) \rightarrow \Hom_{\DMt(S,\Lambda)}(\Lambda(1),\E(1))
$$
corresponds to an isomorphism of the form
$$
\Hom_{\DMte(S,\Lambda)}(\Lambda,E_0)
 \rightarrow \Hom_{\DMte(S,\Lambda)}(\Lambda(1),E_{1})=\tilde H^1(\GG,E_1)
$$
which is a particular case of the isomorphism \eqref{eq:stab_iso}
 considered above. Thus, it sends the unit map $\eta$ of $\E$
 to the class $\bar c'$. Thus the equivalence of conditions
 (i) and (ii) follows from Lemma \ref{lm:ring_Morel}.
\end{proof}

\begin{rem}
This proposition is an extension of the construction given
 in \cite[sec. 2.1]{CD2}. The main difference is that we consider
 here theories in which the different twists are not necessarily 
 isomorphic. By contrast, we require the datum of a stability class 
 here whereas we do not need a particular choice in \emph{op. cit.}

Note also that a similar extension has appeared in \cite{HS}
 applied to Deligne cohomology.
\end{rem}
\section{Motivic ring spectra}

In this section we introduce one of the central notion
 of motivic homotopy theory, that of motivic ring spectrum.
 Our primary aim is to prove that to such an object
  is associated a Bloch-Ogus cohomology theory, a result which has not
  yet appeared in the literature of motivic homotopy theory.
 Moreover, we extend the formalism of Bloch-Ogus by proving many
  more properties, relying on some of the main constructions of motivic homotopy theory
  (\cite{Deg8}, \cite{Ayoub} and \cite{CD3}).
In the next section we will give several examples of motivic ring spectra,
among them the motivic ring spectrum representing the rigid syntomic cohomology.

We fix  a base scheme $S$ (noetherian and finite dimensional) and $\Lambda$ a
$\QQ$-algebra.
%
%\subsection{--}
\subsection{Gysin morphisms and regulators} \label{sec:Gysin}

\begin{df}\label{def:mrs}
A {\em motivic ring spectrum} (over $S$) is a ring spectrum $\E$
 which is also a Morel motive. In particular it is an object of
$\DMB(S,\Lambda)$.
\end{df}
If $X$ is an $S$-scheme, we will denote by
$$
\E^{n,i}(X):=\Hom_{\DMB(S,\Lambda)}(M(X),\E(i)[n])
$$
the associated bi-graded cohomology groups.
%
% this section is devoted to the construction of the rigid and Hodge
%(or Hodge-deRham) spectrum
\begin{rem}
\begin{enumerate}
\item In the current terminology of motivic homotopy theory,
 what we call a motivic ring spectrum should be called
 an \emph{oriented} motivic ring spectrum (see also Remark
 \ref{rem:axioms_motivic_ring_sp}).
 This abuse of terminology is justified as we
  will never consider non oriented ring spectra in
  this work.
\item In the previous section, we have seen that there exists a stronger notion
 of ring spectrum, that of stably fibrant Tate spectrum.
 The ring spectra that we will construct will always satisfies this
  stronger assumption. Moreover, given a ring spectrum in the sense of
  the above definition, it is always possible to find a stably fibrant
  Tate spectrum which is isomorphic in $\DMB(S)$ to the first given one (according to Theorem
  \ref{thm:model_ring_sp}).
 On the other hand, this stronger notion will not be used in
  this section that is why we consider above the simpler notion. 
  The stronger notion will be needed in Section \ref{sec:modules}.
\end{enumerate}

\end{rem}

\begin{num}\label{num:cycle_classes}
%Let $R$ be a complete discrete valuation ring of mixed characteristic.
%
Recall that Beilinson motivic cohomology for smooth $S$-schemes
 is the cohomology represented by the unit object of $\DMB(S)=\DMB(S,\QQ)$:
$$
\HB^{n,i}(X):=\Hom_{\DMB}(M(X),\un(i)[n]).
$$
This group can also be described as the $i$-graded part
 for the $\gamma$-filtration of algebraic rational $K$-theory:
$$
\HB^{n,i}(X)=\gr_\gamma^i K_{2i-n}(X)_\QQ \ .
$$
See \cite[14.2.14]{CD3}.

By construction, the ringed cohomology $\E^{**}$
 admits a canonical action of Beilinson motivic cohomology
 $\HB^{**}$.
 Concretely, 
 for any smooth $S$-scheme $X$ and any couple of integers $(n,i)$,
 the unit map $\un \rightarrow \E$ 
 induces a canonical morphism 
\begin{equation} \label{eq:syntomic_higher_cycle}
\sigma_{\E}:\HB^{n,i}(X)=\Hom_{\DMB}(M(X),\un(i)[n])
 \rightarrow \Hom_{\DMB}(M(X),\E(i)[n])=\E^{n,i}(X)
\end{equation}
which is compatible with pullbacks and products.
This is the \emph{higher cycle class map}  
 (or equivalently the \emph{regulator}) with values in the  $\E$-cohomology. Note also that
 this map can be represented in the category $\DMt(S,\QQ)$ as a 
morphism of ring spectra:
$$
\sigma_\E:\HB \rightarrow \E\quad (\text{by abuse of notation we use the same symbol})
$$
which is unique according to \cite[14.2.16]{CD3}.

When $n=i$, it gives in particular the (usual) cycle class map:
\begin{equation} \label{eq:syntomic_cycle}
\sigma_{\E}:\CH^n(X) \rightarrow \E^{2n,n}(X)
\end{equation}
which is compatible with pullbacks and products of cycles
 as defined in \cite{Ful}.
\end{num}
\begin{num}\label{num:chern_classes}
    A motivic ring spectrum $\E$, considered as an object
      of $\DMt(S)$, is oriented
 (see Remark \ref{rem:axioms_motivic_ring_sp}).
Thus, one can apply to it the orientation theory
 of $\AA^1$-homotopy theory (see \cite{Deg12} in the arithmetic
case). 

This implies that $\E^{**}$ admits Chern classes,
 which are nothing else than the image of the Chern classes in
Chow theory through the cycle class map,
and satisfies the projective bundle formula (see \cite[2.1.9]{Deg12}).
One also gets a Chern character map in $\DMt(S,\QQ)$:
$$
\ch_\syn:KGL_\QQ \xrightarrow{\ch} \oplus_{i \in \ZZ} \HB(i)[2i]
 \xrightarrow{\sigma} \oplus_{i \in \ZZ} \E(i)[2i]
$$
where $KGL_\QQ$ is the ring spectrum representing rational algebraic K-theory
 over $R$ and $\ch$ is the isomorphism of \cite[14.2.7(3)]{CD3}.
This map induces the usual higher Chern character (see \cite{Gillet})
 for any smooth $S$-scheme $X$:
$$
\ch_n:K_n(X)_\QQ
 \rightarrow \prod_{i \in \NN} \E^{2i-n,i}(X).
$$
\end{num}

\begin{num}
Given a motivic ring spectrum $\E$, we can define a (cohomological) realization functor of $\DMB(R)$:
$$
\E(-):\DMB(R)^{op} \rightarrow \QQ_p\text{-vs},\qquad
 M \mapsto \Hom_{\DMB(S)}(M,\E).
$$
This shows that the $\E$-cohomology of a smooth $S$-scheme $X$
 inherits the functorial structure of the motive of $X$.

In particular, given a projective morphism of smooth $S$-schemes
  $f:Y \rightarrow X$, there exists a Gysin morphism on motives:
$$
M(X) \rightarrow M(Y)(-d)[-2d]
$$
where $d$ is the dimension of $f$. This was constructed in \cite{Deg8}
 and several properties of this Gysin morphism were proved there.
 Thus, after applying the functor $\E(-)$ above, one gets:
\end{num}
\begin{thm}\label{thm:Gysin}
Consider the above notations.
One can associate to $f$ a Gysin morphism in syntomic cohomology:
$$
f_*=\E(f^*):\E^{n,i}(Y) \rightarrow \E^{n-2d,i-d}(X).
$$
Moreover, one gets the following properties:
\begin{enumerate}
\item (\cite[5.14]{Deg8})
For any composable projective morphisms $f,g$, $(fg)_*=f_*g_*$.
\item (\emph{projection formula}, \cite[5.18]{Deg8})
For any projective morphism $f:Y \rightarrow X$ and
 any pair $(x,y) \in \E^{*,*}(X) \times \E^{*,*}(Y)$,
 one has:
$$
f_*(f^*(x).y)=x.f_*(y).
$$
\item (\emph{Excess intersection formula}, \cite[5.17(ii)]{Deg8})
Consider a cartesian square of smooth $S$-schemes:
$$
\xymatrix@=14pt{
Y'\ar^q[r]\ar_g[d] & X'\ar^f[d] \\
Y\ar^p[r] & X
}
$$
such that $p$ is projective. Let $\xi/Y'$ be the excess intersection
 bundle\footnote{Recall
 from \emph{loc. cit.} that one defines $\xi$ as follows:
 let us choose a closed embedding $i:Y \rightarrow P$ into a projective
 bundle over $X$ and let $Y' \rightarrow P'$ be its pullback over $X'$.
 Let $N$ (resp. $N'$) be the normal vector bundle of $Y$ in $P$
 (resp. $Y'$ in $P'$). Then, as the preceding square is cartesian,
 there is a monomorphism  $N' \rightarrow g^{-1}(N)$ 
 of vector bundles over $Y'$
  and one puts:	$\xi=g^{-1}(N)/N'$.}
associated with that square and let $e$ be its rank. \\
Then for any $y \in \E^{*,*}(Y)$, on gets:
$$
f^*p_*(y)=q_*(c_e(\xi).g^*(y)).
$$
\item For any projective morphism $f:Y \rightarrow X$,
 the following diagram is commutative:
$$
\xymatrix@=18pt{
\HB^{n,i}(Y)\ar^-{f_*}[r]\ar_{\sigma_\syn}[d]
 & \HB^{n-2d,i-d}(X)\ar^{\sigma_\syn}[d] \\
\E^{n,i}(Y)\ar^-{f_*}[r] & \E^{n-2d,i-d}(X)\ .
}
$$
\end{enumerate}
\end{thm}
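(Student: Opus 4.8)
The plan is to reduce everything to the motivic statements of \cite{Deg8} and then transport them through the realization functor $\E(-)$. The key point is that $\E(-)$ is defined simply as $\Hom_{\DMB(S)}(-,\E)$, so it is a (contravariant) additive functor that sends the Gysin morphism on motives $\gamma_f \colon M(X) \to M(Y)(-d)[-2d]$ constructed in \emph{loc. cit.} to a map $f_* \colon \E^{n,i}(Y) \to \E^{n-2d,i-d}(X)$. Since $\E(-)$ is a functor, item (1) (functoriality of Gysin morphisms) is \emph{immediate} from the corresponding functoriality $\gamma_{fg} = \gamma_g \circ \gamma_f$ of \cite[5.14]{Deg8}: apply $\E(-)$ and reverse the order of composition. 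So the first step would be: define $f_* := \E(\gamma_f)$ and observe that item (1) is formal.

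Next I would treat items (2) and (3). These are the images under $\E(-)$ of identities that already hold in $\DMB(S)$ at the level of motives and cycle classes. For the projection formula, the relevant input is \cite[5.18]{Deg8}: there the identity $\gamma_f$ interacts with the diagonal/multiplication in a way that, after applying the lax monoidal structure of $\E(-)$ (which exists because $\E$ is a ring spectrum, hence $\E^{*,*}$ is a bigraded ring and pullbacks/cup-products are realized by $\E(-)$), yields $f_*(f^*(x)\cdot y) = x \cdot f_*(y)$. The cleanest route is to note that $\E^{*,*}(-)$ is, by the construction of Section 1 together with the orientation theory invoked in \ref{num:chern_classes}, a full oriented cohomology theory in the sense of \cite{Deg8}/\cite{Deg12}, and the Gysin morphisms $f_*$ defined above coincide with the Gysin maps produced by that formalism; then items (2) and (3) are literally the statements \cite[5.18]{Deg8} and \cite[5.17(ii)]{Deg8} read off for the theory $\E^{*,*}$. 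For (3) one also needs the Chern class $c_e(\xi) \in \E^{2e,e}(Y')$, which exists because $\E$ is oriented (\ref{num:chern_classes}), and the excess bundle $\xi$ is the one defined in the footnote.

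For item (4), the compatibility of the regulator $\sigma_\E$ with Gysin morphisms, I would argue as follows. Both $\HB$ and $\E$ are oriented motivic ring spectra, and by \ref{num:cycle_classes} the regulator is represented by a \emph{morphism of ring spectra} $\sigma_\E \colon \HB \to \E$ in $\DMt(S,\QQ)$, unique by \cite[14.2.16]{CD3}. Since the Gysin morphism on motives $\gamma_f$ is intrinsic to $\DMB(S)$ and does not depend on the coefficient spectrum, the square in (4) is obtained by applying $\Hom_{\DMB(S)}(\gamma_f, -)$ to the map $\sigma_\E \colon \HB \to \E$; naturality of $\Hom$ in the second variable makes it commute automatically. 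Equivalently: $f_*^\E \circ \sigma_\E = \E(\gamma_f) \circ \HB(\gamma_f)^{-1}\circ\!\big((\sigma_\E)_* \text{ precomposed}\big)$ — more simply, $\sigma_\E$ being a natural transformation of the functors $\HB(-) \Rightarrow \E(-)$ on $\DMB(S)^{op}$, it commutes with the image of any fixed morphism $\gamma_f$ of $\DMB(S)$.

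The main obstacle I anticipate is \emph{not} any of the four items individually — each is a quick consequence of \cite{Deg8} once the dictionary is set up — but rather the bookkeeping needed to certify that the Gysin maps $f_*$ defined here by $\E(\gamma_f)$ genuinely agree with the Gysin maps of the oriented-cohomology formalism of \cite{Deg8} to which properties (1)--(3) are attributed; i.e. one must check that the orientation on $\E$ coming from the class $c$ (via Proposition~\ref{prop:ring_spectrum_existence} and Remark~\ref{rem:axioms_motivic_ring_sp}) is the one used to build $\gamma_f$. Given the construction in \ref{num:chern_classes}, where Chern classes on $\E^{*,*}$ are defined precisely as images of the Chow Chern classes under $\sigma_\E$, this compatibility holds by \cite[14.2.16]{CD3} (uniqueness of the orientation), so in the write-up I would simply cite that and then invoke \cite[5.14, 5.17(ii), 5.18]{Deg8} for (1)--(3) and the naturality of $\sigma_\E$ for (4).
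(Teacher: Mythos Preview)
Your proposal is correct and matches the paper's approach exactly: the paper defines $f_*$ by applying the realization functor $\E(-)=\Hom_{\DMB(S)}(-,\E)$ to the motivic Gysin morphism $M(X)\to M(Y)(-d)[-2d]$ of \cite{Deg8}, and then simply cites \cite[5.14, 5.17(ii), 5.18]{Deg8} for (1)--(3), with (4) following from naturality of the regulator. Your write-up is in fact more detailed than the paper's, which gives no argument beyond the paragraph preceding the theorem statement; your remark about orientation compatibility is a reasonable sanity check but is not addressed in the paper.
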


\begin{rem}
\begin{itemize}
\item With the notation of Point (3) recall that 
 $\xi$ has dimension $n-m$ where $n$ (resp. $m$)
 is the dimension of $p$ (resp. $q$). In particular, 
 when the square is \emph{transverse} \emph{i.e.} $n=m$,
 one gets the more usual formula: $f^*p_*=q_*g^*$.
\item Point (2) can simply be derived from the preceding formula
 applied to the graph morphism $\gamma:Y \rightarrow Y \times_S X$
 given that $\gamma^*$ is compatible with products.
\item Point (4) shows in particular that, when $i:Z \rightarrow X$
 is a closed immersion, $i_*(1)=\sigma_\E([Z])$ is the 
 fundamental class of $Z$ in $X$. If $Z$ is a smooth divisor,
 corresponding to the line bundle $\mathcal L/X$,
 one gets in particular: 
$$
i_*(1)=c_1(\mathcal L).
$$
This property determines uniquely the Gysin morphism in the case of a closed
 immersion (see \cite{Deg8} or \cite{Panin}).

When $p:P \rightarrow X$ is the projection of a projective
 bundle of rank $n$ and canonical line bundle $\lambda$,
 one gets, again applying Point (4):
$$
p_*(c_1(\lambda)^i)=\begin{cases} 1 & \text{if i=n} \\ 0 & \text{otherwise}
\end{cases}
$$
This fact, together with the projective bundle formula in syntomic
 cohomology, determines uniquely the morphism $p_*$.

By construction,
 the Gysin morphism $f_*$ for any projective morphism $f$ is completely
 determined by the two above properties.
 \item For syntomic cohomology, Point (4) was conjectured by Besser \cite[Conjecture~4.2]{Bes:12a} (in the case of  proper morphisms) and Theorem~1.1 in {\em loc. cit.} is conditional to the conjecture. The latter result concerns the regulator of a proper and smooth surface $S$ over $R$.
We also note that Point (4) has already been used (in the projective morphism case, although stated for proper maps) in \cite[p. 505]{Lan:11a} but the references given there is a draft of \cite{ChiCicMaz:Cyc10} which turns to be different from the published version and does not contain the above statement, neither its proof.
\end{itemize}
\end{rem}

\begin{ex}
Let $f:Y \rightarrow X$ be a finite morphism between
 smooth connected $S$-schemes. Let $d$ be the degree
 of the extension of the corresponding function fields.
Then one gets the \emph{degree formula} in $\E$-cohomology:
 for any $x \in \E^{*,*}(X)$,
$$
f_*f^*(x)=d.x.
$$
Indeed, according to \ref{thm:Gysin}(1),
$$
f_*f^*(x)=f_*(1.f^*(x))=f_*(1).x.
$$
Then one gets $f_*(1)=d$ from \ref{thm:Gysin}(4)
 and the degree formula in Beilinson motivic cohomology.
\end{ex}

As a corollary of Point (4) of the preceding theorem,
 one obtains the Riemann-Roch formula in $\E$-cohomology.
\begin{cor}
Let $f:Y \rightarrow X$ be a projective morphism between
 smooth $S$-schemes. Let $\tau_f$ be the virtual tangent bundle of
 $f$ in $K_0(X)$: $\tau_f=[T_X]-[T_Y]$, the difference
 of the tangent bundle of $X/S$ with that of $Y/S$.
Then for any element $y \in K_n(Y)_\QQ$,
 one gets the following formula:
$$
\ch_\E\big(f_*(y)\big)=f_*\big(\td(\tau_f).\ch_\E(y)\big)
$$
where $\td(\tau_f)$ is the Todd class of the virtual vector bundle
 $\tau_f$ in  $\E$-cohomology (defined for example as the image of the
 usual Todd class in Chow groups by the cycle class map).
\end{cor}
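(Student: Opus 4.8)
The plan is to deduce this from the Riemann--Roch theorem already available in Beilinson motivic cohomology, by pushing it through the regulator $\sigma_\E\colon\HB\to\E$. Recall from Paragraph~\ref{num:cycle_classes} that $\sigma_\E$ is a morphism of ring spectra; in particular it is multiplicative on $\E$-cohomology and, by Theorem~\ref{thm:Gysin}(4), it commutes with the Gysin morphisms $f_*$ for any projective $f$. Moreover, by construction of $\ch_\E$ in Paragraph~\ref{num:chern_classes} one has $\ch_\E=\sigma_\E\circ\ch$, where $\ch$ is the higher Chern character with values in $\HB^{**}$, and the $\E$-theoretic Todd class is by definition $\td(\tau_f)=\sigma_\E(\td(\tau_f))$, the image of the Chow-theoretic Todd class under the cycle class map \eqref{eq:syntomic_cycle}.

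The input from motivic cohomology is Gillet's higher Riemann--Roch formula \cite{Gillet}: for $f\colon Y\to X$ projective between smooth $S$-schemes and $y\in K_n(Y)_\QQ$, with $f_*\colon K_n(Y)_\QQ\to K_n(X)_\QQ$ the (rational) projective pushforward,
\[
\ch\big(f_*(y)\big)=f_*\big(\td(\tau_f)\cdot\ch(y)\big)\qquad\text{in }\prod_{i}\HB^{2i-n,i}(X),
\]
the $f_*$ on the right being the motivic Gysin morphism of \cite{Deg8}. Equivalently, and this is the form I would actually invoke, the Chern character isomorphism $\ch\colon KGL_\QQ\xrightarrow{\ \sim\ }\bigoplus_i\HB(i)[2i]$ of \cite[14.2.7(3)]{CD3} carries the $KGL$-Gysin morphism (i.e.\ $K$-theoretic pushforward) to the Todd-twisted $\HB$-Gysin morphism; this identity between two systems of pushforwards is exactly the content of \cite{Gillet}, and it can also be read off the general orientation/Riemann--Roch formalism (\cite{Deg12}, \cite{Deg8}) comparing the multiplicative orientation of $KGL$ with the additive one of $\HB$.

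Granting this, the corollary is a two-line computation. Since $f$ has a fixed relative dimension $d$, $f_*$ shifts bidegrees by $(-2d,-d)$ and hence makes sense termwise on $\prod_i\E^{2i-n,i}$; using $\ch_\E=\sigma_\E\circ\ch$, the motivic Riemann--Roch formula, compatibility of $\sigma_\E$ with Gysin morphisms (Theorem~\ref{thm:Gysin}(4)), multiplicativity of $\sigma_\E$ and the definition of $\td(\tau_f)$ in $\E$-cohomology, one gets
\begin{align*}
\ch_\E\big(f_*(y)\big)
&=\sigma_\E\big(\ch(f_*(y))\big)
=\sigma_\E\big(f_*(\td(\tau_f)\cdot\ch(y))\big) \\
&=f_*\big(\sigma_\E(\td(\tau_f))\cdot\sigma_\E(\ch(y))\big)
=f_*\big(\td(\tau_f)\cdot\ch_\E(y)\big).
\end{align*}

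I expect the only genuine obstacle to be the first ingredient: isolating the precise form of the motivic Riemann--Roch statement and, in particular, the fact that the Chern character isomorphism of \cite{CD3} intertwines the $K$-theoretic pushforward with the Todd-twisted motivic Gysin morphism. Everything after that --- the behaviour of $\sigma_\E$ on products and on Gysin morphisms --- is formal and supplied by Paragraph~\ref{num:cycle_classes} and Theorem~\ref{thm:Gysin}(4); no new geometric input is needed for the passage from $\HB$ to $\E$.
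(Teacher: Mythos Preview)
Your proposal is correct and follows exactly the route the paper takes: deduce the formula from Gillet's Riemann--Roch in Beilinson motivic cohomology by applying the higher cycle class map $\sigma_\E$ and invoking Theorem~\ref{thm:Gysin}(4) for its compatibility with Gysin morphisms. The paper's own proof is the one-sentence version of your argument.
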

In fact, this corollary is deduced from the Riemann-Roch formula
 in motivic cohomology after applying to it the  higher cycle class
 and applying Point (4) of the previous theorem.
\subsection{The six functors formalism and Bloch-Ogus axioms}
\label{sec:Bloch-Ogus}

In this section, we will recall some consequences 
 of Grothendieck six functors formalism
 established for Beilinson motives
 (see \cite[2.4.50]{CD3} for a summary)
 and apply this theory to the spectra considered in this paper.
We will  consider only separated $S$-schemes
 of finite type over $S$.
 We will also consider an abstract object $\E$ of $\DMB(S)$.

\begin{num}\label{num:4theories}
We associate with $\E$
 four homology/cohomology theories defined 
 for an $S$-scheme $X$ with structural morphism $f$ 
 and a pair of integers $(n,i)$ as follows:
\begin{center}
\begin{tabular}{|l|l|}
\hline
Cohomology & $\E^{n,i}(X)=\Hom(\un_S,f_*f^*\E(i)[n])$ \\
\hline
Homology & $\E_{n,i}(X)=\Hom(\un_S,f_!f^!\E(-i)[-n])$ \\
\hline
Cohomology with compact support & $\E^{n,i}_c(X)=\Hom(\un_S,f_!f^*\E(i)[n])$ \\
\hline
Borel-Moore homology & $\E_{n,i}^\BM(X)=\Hom(\un_S,f_*f^!\E(-i)[-n])$ \\
\hline
\end{tabular}
\end{center}
We will use the terminology \emph{c-cohomology} (resp. \emph{BM-homology})
 for cohomology with compact support (resp. Borel-Moore homology).

Note that these definitions, applied to the unit object $\un$ of $\DMB(S)$,
 yield the four corresponding motivic theories.
Also, these definitions are (covariantly) functorial in $\E$.
In particular, if $\E$ admits a structure of a monoid in $\DMB(S)$
 (\emph{i.e.} $\E$ is a ring spectrum), the unit map 
 $\eta:\un \rightarrow \E$ yields regulators in all four theories.

When $X/S$ is proper, as $f_*=f_!$, one gets identifications:
$$
\E^{n,i}(X)=\E^{n,i}_c(X), \qquad \E_{n,i}^\BM(X)=\E_{n,i}(X).
$$
\end{num}
\begin{num}\label{num:functo}
\textit{Functoriality properties}.--
We consider a morphism of $S$-schemes:
$$
\xymatrix@=10pt{
Y\ar^f[rr]\ar_q[rd] && X\ar^p[ld] \\
& S &
}
$$
Using the adjunction map
 $ad_f:1 \rightarrow f_*f^*$ (resp. $ad'_f:f_!f^! \rightarrow 1$),
 we immediately obtain that cohomology is contravariant
 (resp. homology is covariant) by 
 composing on the left by $p_*$ (resp. $p_!$)
 and on the right by $p^*$ (resp. $p^!$).

When $f$ is proper, $f_!=f_*$.
 Using again $ad_f$, $ad'_f$, one deduces that
 c-cohomology 
 (resp. BM-homology) is contravariant 
 (resp. contravariant) with respect to proper maps.

When $f$ is smooth of relative dimension $d$,
 one has the relative purity isomorphism:
$$f^! \simeq f^*(d)[2d]$$
  (see in \cite{CD3}: Th. 2.4.50 for the statement
	and Sec. 2.4 for details on relative purity).
 In particular, one derives from $ad_f$ and $ad'_f$ the following
 maps:
$$
f_*:\E^{n,i}_c(X) \rightarrow \E^{n-2d,i-d}_c(Y), \quad
 f^*:\E_{n,i}^\BM(X) \rightarrow \E_{n+2d,i+d}^\BM(Y).
$$
Finally, when $f$ is proper and smooth of relative dimension $d$
 one gets in addition:
$$
f_*:\E^{n,i}(X) \rightarrow \E^{n-2d,i-d}(Y), \quad
 f^*:\E_{n,i}(X) \rightarrow \E_{n+2d,i+d}(Y).
$$
Let us summarize the situation:
\begin{center}
\begin{tabular}{|l|c|c|}
\hline
theory & covariance (degree) & contravariance (degree) \\
\hline \hline
Cohomology & smooth proper, (-2d,-d) & any \\
\hline
Homology & any & smooth proper, (+2d,+d) \\
\hline
Cohomology with compact support
 & smooth, (-2d,-d) & proper \\
\hline
Borel-Moore homology & proper & smooth, (+2d,+d) \\
\hline
\end{tabular}
\end{center}
\end{num}

\begin{rem} \label{rem:projection_formulas1}
The fact that the functorialities constructed above
 are compatible with composition is obvious except
 when a smooth morphism is involved. This last case follows 
 from the functoriality of the relative purity isomorphism
 proved by Ayoub in \cite{Ayoub}.

When considering one of the four theories associated with $\E$,
 one can mix the two kinds of functoriality in a projection formula
 as usual. In fact, given a cartesian square:
$$
\xymatrix@=14pt{
Y'\ar^g[r]\ar_q[d] & X'\ar^p[d] \\
Y\ar^f[r] & X
}
$$
such that $f$ is proper and smooth (or $f$ smooth and $g$ proper when
 considering $\E_c$ or $\E^\BM$),
 one obtains respectively:
\begin{itemize}
\item $f^*p_*=q_*g^*$ for the two homologies,
\item $p^*f_*=g_*q^*$ for the two cohomologies.
\end{itemize}
This is a lengthy check coming back to the definition of the relative
 purity isomorphism. The essential fact is that 
$$
g^{-1}(T_{Y/X})=T_{Y'/X'}
$$
 where $T_{Y/X}$ (resp. $T_{Y'/X'}$) is the tangent bundle of $f$ (resp. $g$).
\end{rem}

%\begin{rem}
%If one takes care of the base, there is a more subtle
 %functoriality analog to bivariant theory.
%In fact, consider a cartesian square:
%$$
%\xymatrix@=10pt{
%X'\ar^{f'}[r]\ar_q[d] & X\ar^p[d] \\
%T\ar_f[r] & S
%}
%$$
%and assume we have spectra $\E_S$
 %in $\DMB(S)$ and $\E_T$ in $\DMB(T)$ such that
 %$\E_T\simeq f^*(\E_S)$.
%
%Recall one has the exchange isomorphism:
 %$f^*p_! \simeq q_!f^{\prime *}$.
%Thus one obtains a map:
%$$
%p_!p^* \xrightarrow{ad_f}
 %f_*f^*p_!p^* \simeq f_*q_!f^{\prime *}p^*=f_*q_!q^*f^*
%$$
%which induces:
%$$
%f^*:\E^{n,i}_c(X/S) \rightarrow \E^{n,i}_c(X'/T).
%$$
%
%Similarly, one has the exchange isomorphism:
 %$p^!f_* \simeq f'_*q^!$.
%Thus one obtains a map:
%$$
%p_*p^! \xrightarrow{ad_f}
 %p_*p^!f_*f^* \simeq p_*f'_*q^!p^*=f_*q_*q^!f^*
%$$
%which induces:
%$$
%f^*:\E_{n,i}^\BM(X/S) \rightarrow \E_{n,i}^\BM(X'/T).
%$$
%\end{rem}

\begin{num} \textit{Products}.--
Let us now assume that $\E$ is a ring spectrum,
 with unit map $\eta:\un_S \rightarrow \E$
 and product map $\mu:\E \otimes \E \rightarrow \E$.

Of course, for any $S$-scheme $X$ with structural map $f$,
 we can define a product on cohomology, 
 sometimes called the cup-product:
$$
\E^{n,i}(X) \otimes \E^{m,j}(X) \rightarrow \E^{n+m,i+j}(X),
 (x,y) \mapsto xy=x \scup y ;
$$
given cohomology classes
$$
x:\un_X \rightarrow f^*\E(i)[n], y:\un_X \rightarrow f^*\E(j)[m],
$$
we define $xy$ as the following composite map:
$$
\un_X \xrightarrow{x \otimes y} f^*(\E)(i)[n] \otimes f^*(\E)(j)[m]
 =f^*(\E \otimes \E)(i+j)[n+m]
 \xrightarrow{\mu} f^*(\E)(i+j)[n+m].
$$
This product is obviously commutative and associative.
Note one can also define an exterior product on cohomology as follows:
$$
\E^{n,i}(X) \otimes \E^{m,j}(Y) \rightarrow \E^{n+m,i+j}(X \times_S Y),
 (x,y) \mapsto p_1^*(x).p_2^*(y)
$$
where $p_1$ (resp. $p_2$) is the projection $X \times_S Y/X$
 (resp. $X \times_S Y/Y$).

One can also define \emph{exterior products}
 on c-cohomology.
 Consider a cartesian square:
$$
\xymatrix@=14pt{
X \times_S Y\ar^-{f'}[r]\ar_{g'}[d]\ar|h[rd] & Y\ar^g[d] \\
X\ar_f[r] & S
}
$$
of separated morphisms of finite type.
We define the following product on c-cohomology:
$$
\E^{n,i}_c(X) \otimes \E^{m,j}_c(Y)
 \rightarrow \E^{n+m,i+j}_c(X \times_S Y), (x,y) \mapsto x \times y
$$
which associates to any maps
$$
x:\un_S \rightarrow f_!f^*\E(i)[n], y:\un_S \rightarrow g_!g^*\E(j)[m],
$$
the following composite map $x \times y$:
\begin{align*}
\un_S \xrightarrow{x \otimes y} &f_!f^*(\E)(i)[n] \otimes g_!g^*(\E)(j)[m] \\
& \simeq f_!\big(f^*(\E)(i)[n] \otimes f^*g_!g^*(\E)\big)(i+j)[n+m] \\
 & \simeq f_!\big(f^*(\E)(i)[n] \otimes g'_!f^{\prime*}g^*(\E)\big)(i+j)[n+m] \\
 & \simeq f_!g'_!\big(g^{\prime*}f^*(\E)(i)[n] \otimes f^{\prime*}g^*(\E)\big)(i+j)[n+m] \\
&=h_!h^*(\E \otimes \E)(i+j)[n+m]
 \xrightarrow{\mu} h_!h^*(\E)(i+j)[n+m]
\end{align*}
where the first and third isomorphisms follow from the projection formula
 \cite[2.4.50(v)]{CD3} and the second one from the exchange isomorphism
 \cite[2.4.50(iv)]{CD3}.

One can check the following formulas:
$$
(x \times y) \times z=x \times (y \times z), 
 x \times y=y \times x
$$
through the respective isomorphisms
$$
(X \times_S Y) \times_S Z \simeq (X \times_S Y) \times_S Z,
 X \times_S Y\simeq Y \times_S X.
$$

Further, because c-cohomology is contravariant with respect
 to proper morphism, given any $S$-schemes $X$ (separated of finite type),
 the diagonal embedding $\delta:X \rightarrow X \times_S X$
 allows to define an inner product on c-cohomology:
$$
\E^{n,i}_c(X) \otimes \E^{m,j}_c(X)
 \rightarrow \E^{n+m,i+j}_c(X), (x,x') \mapsto \delta^*(x \times x').
$$
When $X/S$ is proper, one can check this product coincides with
 cup-product on cohomology.
\end{num}

\begin{rem}
Let $f:Y \rightarrow X$ be a proper smooth morphism.
According to the projection formulas established in Remark~\ref{rem:projection_formulas1}, 
one can check that for any couple $(y,x)$
 either in $\E^{n,i}(Y) \times \E^{m,j}(X)$
 or in $\E^{n,i}_c(Y) \times \E^{m,j}_c(X)$,
one gets the following usual projection formula (for products):
$$
f_*(x.f^*(y))=f_*(x).y.
$$
In fact, in each case, one uses the relevant formula of Remark
 \ref{rem:projection_formulas1}, 
 the external product and the following formulas:
$$
y \times f^*(x)=(1_Y \times_S f)^*(y \times x),
 f_*(y) \times x=(f \times_S 1_X)_*(y \times x).
$$
\end{rem}

\begin{num} \textit{Cap product}.--
One can extend the cohomology theory associated with $E$
 to a theory with support. Given any closed immersion
 of $S$-schemes:
$$
\xymatrix@R=10pt@C=24pt{
Z\ar_g[rd]\ar^i[rr] && X,\ar^f[ld] \\
& S &
}
$$
one puts:
$$
\E^{n,i}_Z(X)=\Hom(i_*(\un_Z),f^*\E(i)[n])
 =\Hom(\un_Z,i^!f^*\E(i)[n]).
$$
This theory satisfies all the usual properties.
 We refer the reader to \cite[\textsection 1.2]{Deg12}
 for a detailed account.

Assuming again $\E$ is a ring spectrum with product map
 $\mu:\E \otimes \E \rightarrow E$,
 one defines, following Bloch and Ogus,\cite{BO}, 
 the \emph{cap-product with supports}:
$$
\E_{n,i}^\BM(X) \otimes \E^{m,j}_Z(X)
 \rightarrow \E_{n-m,i-j}^\BM(Z), (x,z) \mapsto x \scap z.
$$
Let us first introduce classical pairing of functors
 (see \cite[IV, \textsection 1.2]{SGA4demi}):
 given any objects $A$ and $B$ of $\DMB(S)$, one considers
 the following composite map
$$
f_!(f^!(A) \otimes f^*(B))
 \xrightarrow{Ex} \lbrack f_!f^!(A)\rbrack \otimes B
 \xrightarrow{ad'_f} A \otimes B
$$
where the first map is the isomorphism of the projection
 formula (\cite[2.4.50]{CD3}) and the second one
 is the counit of the adjunction $(f_!,f^!)$.
 One thus deduces by adjunction the following pairing
$$
f^!(A) \otimes f^*(B) \xrightarrow{\eta_f} f^!(A \otimes B).
$$
Thus, given maps
$$
x:\un_X \rightarrow f^!(\E), \ z:i_*(\un_Z) \rightarrow f^*(\E)
$$
one defines $x \scap z$ from the following composite map:
$$
i_*(\un_Z) \xrightarrow{x \otimes z} 
 f^!(\E) \otimes f^*(\E) \xrightarrow{\eta_f} f^!(\E \otimes \E)
 \xrightarrow{\mu} f^!(\E)
$$
using $i_*=i_!$, the adjunction $(i_!,i^!)$ and $i^!f^!=g^!$.
\end{num}

\begin{rem}
Consider a cartesian square of $S$-schemes
$$
\xymatrix@=20pt{
T\ar^{k}[r]\ar_g[d] & Y\ar^f[d] \\
Z\ar^i[r] & X
}
$$
such that $i$ is a closed immersion and $f$ is proper.
Then, for any couple $(y,z) \in \E_{n,i}^\BM(X) \otimes \E^{m,j}_Z(X)$,
 one obtains the following formula:
$$
f_*(y) \scap z=g_*(y \scap f^*(z)).
$$
\end{rem}

\begin{num}
Suppose again $\E$ is a ring spectrum with unit map
 $\eta:\un_S \rightarrow \E$.

Let $f:X \rightarrow S$ be a smooth $S$-scheme of relative
 dimension $d$. Then, according to \cite[2.4.50(iii)]{CD3},
 one obtains a canonical isomorphism of functors:
$$
\mathfrak p_f:f^! \rightarrow f^*(d)[2d].
$$
In particular, one gets a canonical map
$$
\eta_X:\un_X=f^*(\un_S) \xrightarrow{f^*(\eta)} f^*(\E)
 \xrightarrow{\mathfrak p_f^{-1}} f^!(\E)(-d)[-2d]
$$
which corresponds to a homological class
 $\eta_X \in \E_{2d,d}^\BM(X)$.
The following result is now a tautology:
\end{num}
\begin{prop}
Consider the above assumptions, and let $Z \subset X$
 be any closed subset.
Then the following map:
$$
\E_Z^{n,i}(X) \rightarrow \E^\BM_{2d-n,i-n}(Z),
 z \mapsto \eta_X \scap z
$$
is an isomorphism.
\end{prop}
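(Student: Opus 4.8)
The plan is to show that the source and target of the displayed map are both canonically isomorphic to $\Hom_{\DMB(Z)}(\un_Z,i^!f^*\E(i)[n])$ and that cap-product with $\eta_X$ induces exactly this canonical isomorphism; this is what makes the statement ``a tautology'', since all of the substance is already contained in the six-operation formalism of \cite{CD3}. Write $h=f\circ i:Z\to S$ for the structural morphism of $Z$. For the source, the adjunction $(i_!,i^!)$ together with $i_!=i_*$ gives, as already noted above, $\E^{n,i}_Z(X)\simeq\Hom_{\DMB(Z)}(\un_Z,i^!f^*\E(i)[n])$. For the target, the adjunction $(h^*,h_*)$ together with $h^*\un_S=\un_Z$ identifies the Borel-Moore group with $\Hom_{\DMB(Z)}(\un_Z,h^!\E(i-d)[n-2d])$, and, since $h^!=i^!f^!$, applying $i^!$ to the relative purity isomorphism $\mathfrak p_f:f^!\xrightarrow{\sim}f^*(d)[2d]$ of \cite[2.4.50(iii)]{CD3} rewrites this as $\Hom_{\DMB(Z)}(\un_Z,i^!f^*\E(i)[n])$ as well. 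This yields a canonical isomorphism between source and target.

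It then remains to recognise $z\mapsto\eta_X\scap z$ as this isomorphism. Using bifunctoriality of $\otimes$ to pull $z$ out first, and unwinding the definitions of the cap-product and of $\eta_X=\mathfrak p_f^{-1}\circ f^*(\eta)$, one finds that, through the adjunction identifications above, the map is computed by the composite
\begin{align*}
f^*\E(i)[n]=\un_X\otimes f^*\E(i)[n]
&\xrightarrow{f^*(\eta)\otimes 1}f^*\E\otimes f^*\E(i)[n]
\xrightarrow{\mathfrak p_f^{-1}\otimes 1}f^!\E(-d)[-2d]\otimes f^*\E(i)[n]\\
&\xrightarrow{\eta_f}f^!(\E\otimes\E)(i-d)[n-2d]
\xrightarrow{f^!\mu}f^!\E(i-d)[n-2d].
\end{align*}
I would then invoke two compatibilities of relative purity with the monoidal structure that are part of the purity package of \cite[Sec. 2.4]{CD3}: the naturality of $\mathfrak p_f$ (so that $f^!\mu$ followed by $\mathfrak p_f^{-1}$ on $\E$ equals $\mathfrak p_f^{-1}$ on $\E\otimes\E$ followed by $f^*\mu$, up to the twist) and its compatibility with the pairing $\eta_f$ (so that $\eta_f\circ(\mathfrak p_f^{-1}\otimes 1)$ equals $\mathfrak p_f^{-1}$ composed with the monoidality constraint of $f^*$). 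Granting these, the composite collapses to $\mathfrak p_f^{-1}$ precomposed with the image under the monoidal functor $f^*$ of $\mu\circ(\eta\otimes 1):\un_S\otimes\E\to\E$, and the latter is the unit isomorphism $\un_S\otimes\E\simeq\E$ by the unit axiom of the ring spectrum $\E$. Thus the composite is $\mathfrak p_f^{-1}$ itself, which is an isomorphism, and the Proposition follows.

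The only genuine content is the compatibility of $\mathfrak p_f$ with the pairing $\eta_f$ (equivalently, with the projection formula \cite[2.4.50(v)]{CD3} out of which $\eta_f$ is built); this is precisely how relative purity is designed to interact with the tensor product in \cite{CD3}, so in the write-up I would simply cite it. Alternatively -- and this is probably cleaner -- one can first dispose of the case $\E=\un$, where $\eta_X\in\un^\BM_{2d,d}(X)$ is by construction the relative purity class and the assertion is immediate from the definition of purity for cohomology with support, and then obtain the general case by naturality of the cap-product in $\E$ along the regulator $\sigma_\E:\un\to\E$.
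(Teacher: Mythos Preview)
Your argument is correct and supplies exactly the details the paper suppresses: in the paper the proposition is stated as ``a tautology'' with no further proof, the intended content being precisely that both sides identify with $\Hom(\un_Z,i^!f^*\E(i)[n])$ via relative purity and that the cap-product with $\eta_X$ implements this identification. Your unwinding of $z\mapsto\eta_X\scap z$ into $\mathfrak p_f^{-1}$ precomposed with $f^*(\mu\circ(\eta\otimes 1))$, and the use of the unit axiom, is the right way to make the tautology explicit; the one nontrivial ingredient you isolate --- compatibility of $\mathfrak p_f$ with the pairing $\eta_f$ built from the projection formula --- is indeed part of the purity package and a citation is appropriate. Your alternative route (do $\E=\un$ first, then use naturality of the cap-product along $\sigma_\E$) is equally valid and arguably cleaner, since for $\E=\un$ the claim is literally the definition of the purity isomorphism for cohomology with support.

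One small remark: in computing the target you obtain $\Hom(\un_Z,h^!\E(i-d)[n-2d])$, i.e.\ $\E^\BM_{2d-n,\,d-i}(Z)$, whereas the displayed statement has the twist index $i-n$. Your computation is the correct one; the statement contains a typo in the second index.
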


One can now summarize some of the main properties
 we have proved so far as follows:
\begin{cor}
The couple of functors $\big(\E^{**}$, $\E^\BM_{**}\big)$
 form a Poincar\'e duality theory with supports
 in the sense of Bloch and Ogus (\cite[Def 1.3]{BO}).
\end{cor}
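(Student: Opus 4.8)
The plan is to go through the axioms of a Poincar\'e duality theory with supports as listed in \cite[Def.~1.3]{BO} and check that each of them has already been established in this section, as a formal consequence of the six functors formalism on $\DMB$ and of the ring structure of $\E$. The data to be produced are: a twisted cohomology theory with supports $(X,Z)\mapsto \E^{n,i}_Z(X)$, a twisted (Borel--Moore) homology theory $X\mapsto \E^\BM_{n,i}(X)$, a cap-product pairing relating them, and, for $X/S$ smooth of relative dimension $d$, a fundamental class $\eta_X\in\E^\BM_{2d,d}(X)$ whose cap-product induces a duality isomorphism. The first three objects are exactly the ones introduced in Paragraph~\ref{num:4theories} and in the ``Cap product'' paragraph above, and the duality isomorphism is the content of the Proposition immediately preceding this corollary.

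First I would record the functoriality and exactness axioms. Cohomology with supports $\E^{**}_Z$ is contravariant for morphisms of closed pairs, BM-homology is covariant for proper maps and contravariant for smooth maps with the shifts of Paragraph~\ref{num:functo}, and both theories carry localization long exact sequences. For $i:Z\hookrightarrow X$ a closed immersion with open complement $j:U\hookrightarrow X$, the localization triangle $i_*i^!\to\id\to j_*j^*$ in $\DMB(X)$, evaluated at $f^!\E(-i)[-n]$ (with $f:X\to S$) and pushed forward by $f_*$, yields the long exact sequence $\cdots\to\E^\BM_{n,i}(Z)\to\E^\BM_{n,i}(X)\to\E^\BM_{n,i}(U)\to\E^\BM_{n-1,i}(Z)\to\cdots$, using $(fi)_*=(fi)_!$, $i^!f^!=(fi)^!$, $j^*f^!=j^!f^!=(fj)^!$ and $(fj)_*=(fj)_!$; evaluating the same triangle at $f^*\E(i)[n]$ and applying $\Hom_{\DMB(X)}(\un_X,-)$ gives the long exact sequence of the pair $(X,U)$ in cohomology with supports. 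Excision is built into this triangle, and $\AA^1$-homotopy invariance of both theories is inherited from that of $\DMB$.

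Next I would treat the multiplicative structure. The unit $\eta$ and product $\mu$ of $\E$ give, as in the ``Products'' and ``Cap product'' paragraphs, cup-products on $\E^{**}$ and on $\E^{**}_Z$ and a cap-product $\E^\BM_{n,i}(X)\otimes\E^{m,j}_Z(X)\to\E^\BM_{n-m,i-j}(Z)$; associativity and unitality of these pairings, and the fact that cap-product makes BM-homology a graded module over the cohomology ring, follow from the associativity, commutativity and unit diagrams of $\E$ together with the coherence of the projection ($Ex$) and auxiliary pairing ($\eta_f$) isomorphisms of the six functors formalism. The duality isomorphism $\E^{n,i}_Z(X)\xrightarrow{\ \sim\ }\E^\BM_{2d-n,i-n}(Z)$, $z\mapsto\eta_X\scap z$, for $X/S$ smooth of relative dimension $d$, is precisely the preceding Proposition; one then checks it is compatible with the localization sequences and with proper pushforward and smooth pullback, which reduces to the compatibility of the fundamental classes $\eta_X$ under open immersions and smooth base change ($g^{-1}(T_{X/S})=T_{X'/S'}$, as in Remark~\ref{rem:projection_formulas1}). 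Comparing the resulting list of verified properties with \cite[Def.~1.3]{BO} then finishes the proof.

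The main obstacle is this last batch of compatibilities: that cap-product is compatible with proper pushforward, smooth pullback and the boundary maps of the localization long exact sequences, and that the duality isomorphism intertwines all of these. Each is a diagram chase in the six functors formalism, tracing the definitions of the pairing $\eta_f$ and of the exchange isomorphism through the relevant adjunctions; they are of the same (somewhat lengthy) nature as the verifications in Remark~\ref{rem:projection_formulas1}, the essential geometric input being once more the functoriality of the relative purity isomorphism proved by Ayoub \cite{Ayoub}.
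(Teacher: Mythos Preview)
Your proposal is correct and matches the paper's approach: the corollary is stated in the paper without proof, as a summary of the properties established earlier in the section (functorialities of Paragraph~\ref{num:functo}, products and cap-products, the projection formula of the Remark following the cap-product paragraph, and the duality isomorphism of the immediately preceding Proposition). Your outline simply makes explicit the checklist against \cite[Def.~1.3]{BO} that the paper leaves implicit, and correctly identifies the remaining compatibility verifications as formal diagram chases in the six functors formalism, of the same nature as those in Remark~\ref{rem:projection_formulas1}; the paper itself defers these to \cite[\S1.2]{Deg12}.
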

This is the case in particular for syntomic
 cohomology and syntomic BM-homology.

\begin{num} \label{num:hdescent}
\textit{Descent theory}.--
Recall (see \cite[\textsection 3.1]{CD3})
 that a diagram of $S$-schemes $(\mathcal X,I)$
 is the data of a small category $I$ and a functor
 $\mathcal X:I \rightarrow \mathscr S$.
A morphism of diagrams
 $\varphi=(\alpha,f):(\mathcal X,I) \rightarrow (\mathcal Y,J)$
 is the data of a functor $f:I \rightarrow J$
 and a natural transformation
 $\alpha:\mathcal X \rightarrow f^*(\mathcal Y)$
 where $f^*(\mathcal Y)=\mathcal Y \circ f$.

Accorded to \cite[\textsection 3.1]{CD3},
 the fibered triangulated category $\DMB$ can be
 extended to the category of diagrams.
 Moreover, for any morphism of diagrams
 $\varphi:(\mathcal X,I) \rightarrow (\mathcal Y,J)$,
 one has an adjoint pair of functors:
$$
\varphi^*:\DMB(\mathcal Y,J)
 \leftrightarrows \DMB(\mathcal X,I):\varphi_*.
$$
Consider a diagram of $S$-schemes $(\mathcal X,I)$
 and the canonical morphism
 $\varphi:(\mathcal X,I) \rightarrow (S,*)$
 where $*$ is the final category.
Then one defines the cohomology of $(\mathcal X,I)$ as:
$$
\E^{n,i}(\mathcal X,I)=\Hom(\un,\varphi_*\varphi^*(\E)(i)[n]).
$$
This is contravariant with respect to morphisms of diagrams.

In particular one has extended the cohomology $\E^{*,*}$
 to simplicial $S$-schemes.
 The h-topology was introduced by Voevodsky in \cite{Voe1}.
 Recall that a h-cover $f:Y \rightarrow X$ of $S$-schemes
 is a universal topological epimorphism (e.g. faithfully flat maps,
 proper surjective maps).
 Then the h-descent theorem for Beilinson motives
 (\cite[14.3.4]{CD3}) states the following:

For any quasi-excellent $S$-scheme $X$ and any hypercover
 $p:\mathcal X \rightarrow X$ for the h-topology,
 the canonical map
$$
p^*:\E^{n,i}(X) \rightarrow \E^{n,i}(\mathcal X)
$$
is an isomorphism. In particular, one gets the usual 
 spectral sequence:
$$
E_1^{p,q}=\E^{p,i}(\mathcal X_q) \Rightarrow \E^{p+q,i}(X).
$$
\end{num}

\begin{rem}
As already remarked in \cite{CD1},
 the preceding descent theory together with 
 De Jong resolution of singularities,
 shows that in the case where $S$ is the spectrum of a field
 (non necessarily perfect),
 the cohomology $\E^{*,*}$ is uniquely determined by its restriction 
 to smooth schemes.
\end{rem}

\section{Syntomic spectrum}\label{sec:absdR}
In this section we construct several motivic  ring spectra  (see Def.~\ref{def:mrs}):  $\E_\FdR,\E_\rig, \E_\phi, \E_\syn$.
First for a field
$K$ of characteristic zero we construct $\E_\FdR$ representing the filtered part
of the de Rham cohomology of a $K$-scheme. i.e.
$$
\E_\FdR^{n,i}(X):=\Hom_{\DMt(\eta,\QQ)}(\sus \QQ(X),\E_\FdR(i)[n]) \simeq
F^iH^n_\dR(X)\ .
$$
Then we define $\E_\rig$ which represents the rigid cohomology of Berthelot.
This was already proved in \cite{CD2} in a different way. 
For both $\E_\FdR$ and $\E_\rig$ we use the criteria of
Proposition~\ref{prop:ring_spectrum_existence}.

Finally we get a motivic ring spectrum $\E_\syn$ for the rigid syntomic
cohomology as a homotopy limit of a diagram of ring spectra.
\subsection{Cosimplicial tools}
\begin{num}
  Let $\Delta$ be the category of finite ordered sets $[n]:=\{0,...,n\}$ as
objects and monotone nondecreasing functions as morphisms. Let
$\delta_i(n):[n-1]\to [n]$ (resp. $\sigma_i(n):[n]\to [n-1]$) be the
usual\footnote{\textit{i.e.} the image of $\delta_i(n)$
 is $[n]\setminus \{i\}$.}
(co)face (resp. (co)degeneracy) map. In case there is no
ambiguity we will simply write $\delta_i,\sigma_i$. Given a category $C$, a
\emph{simplicial} (resp. \emph{cosimplicial}) object of $C$ is a functor from
$\Delta^\circ$ (resp. $\Delta$) to $C$. 

For instance let $A_n=\QQ[T_0,...,T_n]/(\sum T_i-1 )$, then this is a
simplicial $\QQ$-algebra in an obvious way. It follows that the associated   differential graded algebra (dga)
of K\"ahler differentials
\begin{equation}
 \omega_n:= \Omega^\bullet_{A_n/\QQ} \quad n\ge 0 
\end{equation}
is a simplicial dga over $\QQ$. We will denote by $\delta^i=\delta_i^*$ (resp.
$\sigma^i=\sigma_i^*$) the structural morphisms.

Now let $M$ be a cosimplicial abelian group and $sM$ the associated simple complex ($sM^i=M[i]$ and the differentials are the alternate sums of the coface morphisms).  Its \emph{standard normalization} $NM$ is  the subcomplex of  $sM$ s.t. $N^qM:=\bigcap_i\ker(\sigma_i)\subset M^q$. Then inclusion $NM\to sM$ is a homotopy equivalence. Now if $M$ is also a cosimplicial commutative monoid the Alexander-Whitney product\footnote{This
is  given as follows. Let
$\delta^-:[q]\to [q+q']$ (resp. $\delta^+:[q']\to [q]$) the map with image
$\{0,1,...,q\}$  (resp. $\{q,q+1,...,q+q'\}$), then define 
$a*b:=\delta^-(a)\cdot \delta^+(q)$.} gives a (differential graded) monoid structure on $sM$ and $NM$, but this is not necessarily (graded) commutative.  Thus we consider the following construction due to Thom and Sullivan. Let $M$ be a cosimplicial dga, we define
$$
\widetilde{N}^qM\subset \prod_m \omega_m^q\otimes M^m
$$
as the submodule whose elements are sequences $(x_m)_{m\ge 0}$ such that
$$
(\id \otimes \delta_i)x_m=(\delta^i\otimes \id)x_{m+1}\quad ,\ (\sigma^i\otimes
\id)x_m= (\id\otimes\sigma_i)x_{m+1}
$$
and define the differentials $D:\widetilde{N}^qM\to \widetilde{N}^{q+1}M$ by
$D=((-1)^q\id\otimes d)+\id\otimes \partial)$, where $d$ (resp. $\partial$) is the differential of $M$ (resp. $\omega_m$).
With the above notation if $M$ is further a cosimplicial commutative monoid then
$\widetilde{N} M$ is a commutative monoid too. Namely we can define
\begin{equation}\label{eq:TSproduct}
		\widetilde{N} M \otimes \widetilde{N} M \rightarrow
\widetilde{N} M
\end{equation}
induced by $(\alpha \otimes m)\otimes (\alpha'\otimes m')=\alpha\wedge
\alpha'\otimes (m\cdot m')$.

Moreover the complex $\widetilde{N} M$ is quasi-isomorphic to the standard
normalization $NM$ (and then to $sM$).\footnote{The isomorphism is induced by
the integration map $\int:\omega^\bullet_n \otimes M^n \to \QQ[-n]\otimes M^n$
defined by
\[
	(dT_1\wedge\cdots\wedge dT_n)\otimes m\mapsto \frac{1}{n!}\otimes m\ .
\] 
%Note that we need to work with $\QQ$-coefficients to define this map.
}

We can  extend  the above constructions to the setting of cosimplicial dg
abelian groups. Given such an $M=M^{pq}$ (where $q$ is the cosimplicial
parameter), then $sM$ (resp. $NM$, $\widetilde{N} M$) is naturally a double complex
and we can apply the total complex functor, denoted by $\tot$,
 to obtain a dg abelian group.

Now we are ready to state a technical result well known to the specialists.
\end{num}
\begin{prop}[cf. {\cite{HinSch:87a},\cite[Appendix]{HY99}}] \label{prop:thomsullivan}
	Let $M$ be a cosimplicial (commutative) dga over $\QQ$ then there exists
a canonical (commutative) dga $\widetilde{N} M$ and a quasi-isomorphism
$\int: \widetilde{N} M \to sM$ inducing an isomorphism of (commutative) dg algebras
in cohomology $H(\int):H(\widetilde{N} M)\to H(sM)$.
\end{prop}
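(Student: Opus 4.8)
The plan is to follow the classical Thom--Sullivan argument. Since $\widetilde{N}M$, its product, and the integration map $\int$ have all been written down in the preceding paragraph, the task is to verify three things: that $\widetilde{N}M$ is a (commutative) dga, that $\int$ is a morphism of complexes, and that $\int$ is a quasi-isomorphism whose induced map on cohomology respects the products.

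\textbf{Algebra structure.} The module $\widetilde{N}M$ is the end $\int_{[m]\in\Delta}\omega_m\otimes M^m$ inside $\prod_m\omega_m\otimes M^m$, and the multiplication \eqref{eq:TSproduct} is the one induced by the wedge product of the simplicial dga $\omega_\bullet$ and the multiplication of $M$. Associativity and unitality are inherited factorwise; the Leibniz rules for $d$ and $\partial$ together with the sign $(-1)^q$ make $D$ a derivation; and when $M$ is a cosimplicial \emph{commutative} monoid, graded-commutativity of each $\omega_m$ and of each $M^m$, combined with the Koszul sign on the tensor factors, gives graded-commutativity of the product. Functoriality in $M$ is built into the definition of the end, and in the cosimplicial dg case one passes to $\tot$ of the resulting double complex.

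\textbf{$\int$ is a chain map.} With the normalization $\int_{\Delta^m}dT_1\wedge\cdots\wedge dT_m=1/m!$ one gets for each $m$ the degree-$m$ map $\omega_m\otimes M^m\to M^m[-m]$ of the footnote above; Stokes' formula, together with the compatibility of these simplex integrals with the cofaces and codegeneracies used to cut out the end (the simplicial identities for $\Delta^\bullet$), shows that on the subspace $\widetilde{N}M$ these maps assemble into a morphism of complexes $\int:\widetilde{N}M\to sM$. This is the only step that calls for an honest computation, but it is the standard Stokes bookkeeping.

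\textbf{Quasi-isomorphism and multiplicativity.} Both $sM$ and $\widetilde{N}M$ are models for the homotopy limit over $\Delta$ of the cosimplicial complex $M$ --- the first the Bousfield--Kan totalization, the second the Thom--Sullivan version in which the constant coefficients $\QQ$ are replaced by the acyclic simplicial dga $\omega_\bullet$ --- and $\int$ is the comparison between them. To see it is a quasi-isomorphism I would run both totalization towers in parallel; on the $n$-th layer the comparison reduces to the \emph{relative} algebraic Poincar\'e lemma over $\QQ$, namely that integration
\[
\Omega^\bullet_{\QQ}(\Delta^n,\partial\Delta^n)\longrightarrow\QQ[-n]
\]
is a quasi-isomorphism (the polynomial forms on $\Delta^n$ vanishing on every face form a complex whose cohomology is $\QQ$, concentrated in degree $n$); a boundedness remark --- only finitely many $n$ contribute in each total degree --- then gives that $H(\int)$ is an isomorphism, and $\tot$ takes care of the cosimplicial dg case. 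Finally, $\int$ is \emph{not} a dga morphism, yet $H(\int)$ is a ring isomorphism: through the homotopy equivalence $NM\hookrightarrow sM$ one compares the Thom--Sullivan product on $\widetilde{N}M$ with the Alexander--Whitney product on $NM$, and both induce the same product on $H(sM)$ --- either via an explicit Dupont-type chain homotopy relating ``wedge-then-integrate'' to Alexander--Whitney, or by acyclic models applied to the two product structures. I expect this last point, the multiplicativity of $H(\int)$, to be the main obstacle: it is exactly where one needs the Thom--Sullivan construction rather than the naive normalization, and it cannot be dispatched by formalities alone.
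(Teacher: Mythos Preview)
The paper does not actually prove this proposition: it is stated with the citations \cite{HinSch:87a} and \cite[Appendix]{HY99} in the header and then used as a black box, with no argument given in the text. Your proposal, by contrast, sketches the standard Thom--Sullivan proof that those references contain, and the outline is correct: the end description of $\widetilde{N}M$, the Stokes computation for $\int$, the layerwise reduction to the polynomial Poincar\'e lemma on $(\Delta^n,\partial\Delta^n)$, and the comparison of the Thom--Sullivan product with Alexander--Whitney on cohomology are exactly the ingredients of the argument in \cite{HinSch:87a} and \cite{HY99}. So there is no discrepancy to flag beyond the fact that you have supplied what the paper deliberately outsourced; if anything, your last paragraph is more honest than most treatments in singling out the multiplicativity of $H(\int)$ as the one point that genuinely requires work.
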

\begin{num}(Godement resolutions)
	Let $u:P\to X$ be a morphism of Grothendieck sites and let 
$P^{\sim}$ (resp. $X^{\sim}$) be
 the category of abelian sheaves on $P$ (resp. $X$). Then we have a 
	 pair of adjoint functors $(u^*,u_*)$, where $u^*:X^{\sim} \to
P^{\sim}$, $u_*:P^{\sim} \to X^{\sim}$.
 For any object $\mathcal{F}$ of $X^{\sim}$
	we can define a co-simplicial object $B^*(\mathcal{F})$ whose component
in
 degree $n$ is   $(u_*u^*)^{n+1}(\mathcal{F})$.\footnote{The cosimplicial
structure is defined as follows. First  let $\eta:\id_{X^{\sim}}\to u_*u^*$ and
$\epsilon:u^*u_*\to \id_{P^{\sim}}$ be the natural transformations induced by
adjunction.

	Endow $B^{n}(\mathcal{F}):=(u_*u^*)^{n+1}(\mathcal{F})$ with co-degeneracy maps
	\[
		\sigma_i^n:=(u_*u^*)^i u_*\epsilon u^* (u_*u^*)^{n-1-i}:B^{n}(\mathcal{F})\to B^{n-1}(\mathcal{F})\quad i=0,..., n-1
	\]
	and co-faces
	\[
		\delta^{n-1}_i:=(u_*u^*)^i\eta(u_*u^*)^{n-i}:B^{n-1}(\mathcal{F})\to B^{n}(\mathcal{F})\quad i=0,...,n\ .
	\]}
\end{num}
\begin{prop}\label{prop:godement} Let $u:P\to X$ a morphism of sites and $\mathcal{F}$ a complex of sheaves on $X$. If    $u^*$ is exact and conservative,  then
	\begin{enumerate}
		\item The complex $\Gdm_P(\mathcal{F}):=s B^*(\mathcal{F})$ is a functorial flask resolution of  $\mathcal{F}$
		\item If  $\mathcal{F}$ is a $\QQ$-linear sheaf,  the
Thom-Sullivan normalization $\tGdm_P(\mathcal{F}):=
\widetilde{N}B^*(\mathcal{F})$ is a functorial resolution of
$\mathcal{F}$; 
		\item If  $\mathcal{F}$ is a sheaf of  (commutative) dga over $\QQ$, then the complex $\tGdm_P(\mathcal{F})= \widetilde{N}B^*(\mathcal{F})$ is a sheaf of commutative dga and  the canonical isomorphism $	H^*(X,\mathcal{F})\cong H^*(\Gamma(X\tGdm\mathcal{F}))$ is compatible with respect to the multiplicative structure. 
	\end{enumerate}
\end{prop}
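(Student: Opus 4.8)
The plan is to dispatch the three assertions in turn, reducing each to the standard homotopy theory of the monadic bar resolution of $T:=u_*u^*$ (unit $\eta$, multiplication $\mu=u_*\epsilon u^*$), combined with Proposition~\ref{prop:thomsullivan}. Everything in sight is manifestly functorial in $\mathcal F$, so I will not comment further on that point.

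For (1), the issue is to show that the coaugmentation $\mathcal F\to \Gdm_P(\mathcal F)=sB^*(\mathcal F)$ is a quasi-isomorphism of complexes of sheaves, and that each term $B^n(\mathcal F)=T^{n+1}\mathcal F$ is flasque. For the first point I would apply $u^*$: since $u^*$ is exact it commutes with $s(-)$, and one has $u^*B^n(\mathcal F)=C^{n+1}(u^*\mathcal F)$ where $C:=u^*u_*$ is the associated comonad on $P^{\sim}$ with counit $\epsilon$. The triangle identity $\epsilon u^*\circ u^*\eta=\mathrm{id}_{u^*}$ makes $u^*\eta_\mathcal F\colon u^*\mathcal F\to C(u^*\mathcal F)$ a $C$-comodule structure (coassociativity follows from naturality of $\eta$), and the cobar construction $C^{\bullet+1}(u^*\mathcal F)$ of a $C$-comodule is a split coaugmented cosimplicial object — the extra codegeneracy being $\epsilon$ applied to the outermost factor — hence $u^*\mathcal F\to s\big(u^*B^*(\mathcal F)\big)$ is a chain homotopy equivalence. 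Conservativity of $u^*$ then upgrades this to the statement that $\mathcal F\to\Gdm_P(\mathcal F)$ is a quasi-isomorphism. Flasqueness of $B^n(\mathcal F)=T^{n+1}\mathcal F$ is the classical Godement fact: for the conservative site of points of $X$ the functor $u_*u^*$ is the sheaf of discontinuous sections, which is flasque, and in general one uses that $u_*$ carries (the trivially flasque) sheaves on $P$ to flasque sheaves; consequently $\Gdm_P(\mathcal F)$ computes $H^*(X,\mathcal F)$, and with the Alexander–Whitney product it does so multiplicatively (the induced product on cohomology being cup product).

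For (2), when $\mathcal F$ is $\QQ$-linear the cosimplicial complex of sheaves $B^*(\mathcal F)$ lives in sheaves of $\QQ$-modules, so the Thom–Sullivan functor $\widetilde N$ recalled above applies (using its $\tot$-version if $\mathcal F$ is itself a complex). Proposition~\ref{prop:thomsullivan}, applied in the $\QQ$-linear abelian category of sheaves of $\QQ$-modules, furnishes a natural coaugmentation $\mathcal F\to \tGdm_P(\mathcal F)=\widetilde N B^*(\mathcal F)$ together with a natural quasi-isomorphism $\int\colon \widetilde N B^*(\mathcal F)\to sB^*(\mathcal F)=\Gdm_P(\mathcal F)$ (a quasi-isomorphism of sheaves, checked on stalks, the underlying filtration/homotopy argument being termwise and natural); combined with (1) this shows $\mathcal F\to\tGdm_P(\mathcal F)$ is a resolution.

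For (3), the bookkeeping concentrates here. Since $u^*$ is strong symmetric monoidal and $u_*$ is lax symmetric monoidal, the adjunction $(u^*,u_*)$ lifts to sheaves of commutative dgas over $\QQ$, with the same unit $\eta$ and counit $\epsilon$; hence if $\mathcal F$ is a sheaf of commutative dgas then every $T^{n+1}\mathcal F$ is one and every cosimplicial structure map of $B^*(\mathcal F)$ (being a whiskering of $\eta$ and $\mu$) is a morphism of commutative dgas, i.e. $B^*(\mathcal F)$ is a cosimplicial sheaf of commutative dgas. The product \eqref{eq:TSproduct} then makes $\tGdm_P(\mathcal F)=\widetilde N B^*(\mathcal F)$ a sheaf of commutative dgas, resolving $\mathcal F$ by (2). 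For the cohomology comparison, note that $\Gamma(X,-)$ is left exact and commutes with finite direct sums, with arbitrary products, and with $-\otimes_\QQ V$ for $V$ a finite-dimensional $\QQ$-vector space; since $\widetilde N^q(-)$ and $s(-)$ are assembled from exactly these operations on the terms $\omega_m^q\otimes(-)$, one obtains canonical identifications $\Gamma(X,\tGdm_P(\mathcal F))=\widetilde N\big(\Gamma(X,B^*(\mathcal F))\big)$ and $\Gamma(X,\Gdm_P(\mathcal F))=s\big(\Gamma(X,B^*(\mathcal F))\big)$ of dgas. Now $\Gamma(X,B^*(\mathcal F))$ is a cosimplicial commutative dga over $\QQ$, so Proposition~\ref{prop:thomsullivan} yields an isomorphism of graded algebras $H^*(\Gamma(X,\tGdm_P(\mathcal F)))\cong H^*(\Gamma(X,\Gdm_P(\mathcal F)))$, whose right-hand side is $H^*(X,\mathcal F)$ as a graded ring by (1). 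Chaining these identifications produces the desired multiplicative isomorphism. The main obstacle is precisely this last step: keeping the commutative-monoid structure alive through the monadic bar construction and through the Thom–Sullivan normalization, and checking that $\Gamma(X,-)$ commutes with $\widetilde N$, so that multiplicativity of the comparison reduces cleanly to Proposition~\ref{prop:thomsullivan}; by contrast the resolution and flasqueness assertions are the classical Godement argument and present no real difficulty.
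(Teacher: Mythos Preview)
Your proof is correct and follows essentially the same approach as the paper's: reduce the resolution statement in (1) to a splitting after applying $u^*$ via the counit $\epsilon$ (the paper phrases this as a null-homotopy $h^i$ induced by $u^*u_*\to\id$, which is your extra codegeneracy), and then obtain (2) and (3) from Proposition~\ref{prop:thomsullivan} together with the observation that $B^*(\mathcal F)$ is a cosimplicial commutative dga when $\mathcal F$ is. Your treatment is in fact more thorough than the paper's, which leaves flasqueness implicit and does not spell out why $\Gamma(X,-)$ commutes with $\widetilde N$.
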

\begin{proof}
	Since $u^*$ is exact and conservative,    to show that the canonical map $b_{\mathcal{F}}:\mathcal{F}\to sB^*(\mathcal{F})$  is a quasi-isomorphism is sufficient to prove that   $u^*b_{\mathcal{F}}$ is a quasi-isomorphism. This follows from the fact that  the augmented complex 
	\[
	   u^*\mathcal{F}\to u^*B^0(\mathcal{F})\to u^*B^1(\mathcal{F})\to \cdots
	\]
	is null-homotopic: the homotopy $h^i:u^* (u_*u^*)^i(\mathcal{F})\to u^*(u_*u^*)^{i-1}(\mathcal{F})$ is induced by the counit $u^*u_*\to \id$ and one checks easily that $\id= d^{i-1}\circ h^{i}+h^{i+1}\circ d^i$, where $d^i$ is given by the alternating sum of cofaces. 
	  The rest follows directly from Prop.~\ref{prop:thomsullivan} and the existence of
 a family of canonical maps%\footnote{This can be proven by an adjunction game! Namely $\Hom(u^*F\otimes u^*G,u^*(F\otimes G))=\Hom(F\otimes G,F\otimes G)$ and ...  Anyhow not an isomorphism!}
	\[
		\cup_n:B^n(F)\otimes B^n(G)\rightarrow B^n(F\otimes G)
	\]
	compatible with the cosimplicial structure. We leave to the reader to check
that if   $\mathcal{F}^*$  is further  a (commutative) dga on $X^{\sim}$ then
$B^*(\mathcal{F}^*)$ is a cosimplicial (commutative) dga\footnote{In
fact one needs to take care of the signs: \[
		\cup_{n}^{ab}:B^n(\mathcal{F}^a)\otimes B^n(\mathcal{F}^b)\to B^n(\mathcal{F}^a\otimes B^n(\mathcal{F}^b))\ , \ \cup_{n}^{ab}=(-1)^{na}\cup_n\ .
	\]

	}.
\end{proof}
\begin{num}[Enough points]\label{num:enough_points}
	We will use the above construction in the case $X$ is the site
associated to a scheme or
a dagger space (in the case of a dagger space we take the site associated to its
$G$-topology). In both cases we let $P$  be the category $Pt(X)$ of
site-theoretical points  of  $X$. For a general $X$   the canonical map
$u:Pt(X)\to X$ is not conservative. The latter property is guaranteed in the two
cases we are interested in. It suffices to exhibit a subcategory $C$ of $Pt(X)$
(with the discrete topology) such that  $u$ restricted to $C$ is conservative.
When $X$ is associated to a scheme (resp. a dagger space) we let $C$ be the
category of  its Zariski points (resp. its Berkovich or adic points). This is
enough as explained in  \cite[\S~3]{ChiCicMaz:Cyc10} or \cite[\S~3]{Tam:Kar11}. 
	
	From now on we will simply write $\tGdm$ instead of $\tGdm_{Pt(X)}$ with $X$ as above.
\end{num}
\subsection{De Rham cohomology}
\begin{num}[The Hodge Filtration]
We recall some well known facts about algebraic de Rham cohomology (see for
instance \cite{Jan:Mix90}). Let $K$ be a field of characteristic zero and
$X$ be a smooth and algebraic $K$-scheme.
Fix a compactification $g:X\to \bar X$ such that the complement $D=\bar
	X \setminus X$ is a normal crossing divisor\footnote{Such a compactification exists by  the Nagata's compactification theorem  and the result of Hironaka on the resolution of singularities.} . Then  consider the complex $\Omega^\bullet_{\bar X /
	K}\langle D\rangle$ of differential forms on $\bar X$ with logarithmic
	differential poles along $D$. The natural inclusion 
		$\Omega^\bullet_{\bar X /	K}\langle D\rangle
		\subset g_*\Omega_{X/K}^\bullet$ is a quasi-isomorphism and we
	define the Hodge filtration on the de Rham cohomology of $ X$ by
	$$
		F^iH_{\dR}^n(X/K):=H^n(\bar X,F^i\Omega^{\bullet}_{\bar X / K}\langle
	D\rangle)
	$$
	where  $F^i \Omega^{\bullet}_{\bar X / K}\langle D\rangle$
is the stupid filtration.

A remarkable result of Deligne says that (for $K=\CC$) the Hodge filtration
does not depend on the chosen compactification. Moreover given a morphism
$f:X\to Y$ of smooth algebraic schemes over $\CC$ the induced morphism
on de Rham cohomology is strictly compatible w.r.t the Hodge
filtrations\footnote{A morphism $f:A\to B$ of filtered vector spaces is strict
if $f(F^iA)=f(A)\cap F^iB$.}. Then the same holds for $H_{\dR}^n(X/K)$ where
$K\subset \CC$ is a field of characteristic zero.
%Also we note that if $K$ is a $p$-adic field one can get strictness  by showing that $H^n_{\dR}(X)$ is a
%(weakly) admissible filtered $(\phi,N)$-module.
%Anyhow  we use  either Hodge theory either $p$-adic Hodge
%theory. 	It would be nice to have an
%algebraic proof of this strictness property.
\end{num}
\begin{prop} Let $X$ be a smooth $K$-scheme.
	\begin{enumerate}
		\item For any normal crossing compactification $\bar X$ of $X$
the resolution $\tGdm(\Omega^{\bullet}_{\bar X / K}\langle D\rangle)$ (notation as in
\S~\ref{num:enough_points}) gives a sheaf of filtered commutative
dga\footnote{Set $F^i\tGdm=\tGdm F^i $.}
and $F^iH_{\dR}^n(X/K)\cong
H^n(\Gamma({\bar X},\tGdm(F^i\Omega^{\bullet}_{\bar X / K}\langle D\rangle))$.
		\item The following complexes  
		\begin{eqnarray}
		    E_{\FdR,i}(X):= & \colim_{\bar X} \Gamma({\bar
X},\tGdm(F^i\Omega^{\bullet}_{\bar X / K}\langle D\rangle)\\
E'_{\dR}(X):= & \colim_{\bar
X}\Gamma({\bar X},\tGdm(   g_*\Omega^\bullet_X))\\
E_{\dR}(X):=& \Gamma( X,\tGdm( \Omega^\bullet_X))
		\end{eqnarray}
		are functorial in $X$ and there are functorial quasi-isomorphisms\footnote{We introduce $E'_{\dR}$ since there is no natural map between $E_\dR$  and $E_{\FdR,i}$.}
		\[
			E_{\FdR,0}(X)\rightarrow
			E'_{\dR}(X)\leftarrow E_{\dR}(X)
		\]
	\end{enumerate}
\end{prop}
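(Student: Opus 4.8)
The plan is to reduce both assertions to Proposition~\ref{prop:godement}, to the classical properties of the logarithmic de Rham complex, and to the two facts that $\bar X$ is a \emph{Noetherian} scheme and that its small Zariski site has enough points (\S\ref{num:enough_points}). For (1), note first that $\Omega^{\bullet}_{\bar X/K}\langle D\rangle$ is a sheaf of graded-commutative dg $\QQ$-algebras for the wedge product, so by Proposition~\ref{prop:godement}(3) the complex $\tGdm(\Omega^{\bullet}_{\bar X/K}\langle D\rangle)$ is a sheaf of commutative dga resolving it; the stupid filtration is by subcomplexes and is multiplicative ($F^{a}\wedge F^{b}\subseteq F^{a+b}$), and since the Godement functor $B^{\bullet}=(u_*u^*)^{\bullet+1}$ is built from the exact functors ``stalk'' and ``product of skyscrapers'', $\tGdm$ preserves monomorphisms, so $\tGdm(F^{i}(\cdot))$ is a subcomplex of $\tGdm(\cdot)$ and the Thom--Sullivan product \eqref{eq:TSproduct} respects it: one gets a filtered sheaf of commutative dga. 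To identify its cohomology, apply $\Gamma(\bar X,-)$: as $\bar X$ is Noetherian, global sections commute with the products, equalizers and $\QQ$-linear tensor factors entering the Thom--Sullivan normalization, so $\Gamma(\bar X,\tGdm(\mathcal F))=\widetilde N(\Gamma(\bar X,B^{\bullet}(\mathcal F)))$ and $\Gamma(\bar X,\Gdm(\mathcal F))=s(\Gamma(\bar X,B^{\bullet}(\mathcal F)))$; the latter computes $H^{n}(\bar X,\mathcal F)$ because $\Gdm(\mathcal F)$ is a flasque resolution, and the quasi-isomorphism $\int$ of Proposition~\ref{prop:thomsullivan} shows the former does too, functorially in $\mathcal F$. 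Taking $\mathcal F=F^{i}\Omega^{\bullet}_{\bar X/K}\langle D\rangle$ then gives $H^{n}(\Gamma(\bar X,\tGdm(F^{i}\Omega^{\bullet}_{\bar X/K}\langle D\rangle)))\cong H^{n}(\bar X,F^{i}\Omega^{\bullet}_{\bar X/K}\langle D\rangle)=F^{i}H^{n}_{\dR}(X/K)$.

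For (2): functoriality in $X$ of $E_{\dR}$ comes from $\Omega^{\bullet}_{Y}\to f_{*}\Omega^{\bullet}_{X}$ together with the transformation $\tGdm_{Y}(\mathcal G)\to f_{*}\tGdm_{X}(f^{*}\mathcal G)$ attached to the continuous map $f$; for $E_{\FdR,i}$ and $E'_{\dR}$ one checks that the normal crossing compactifications of $X$ form a directed poset under domination --- non-empty by Nagata and Hironaka, any two dominated by the closure of the diagonal copy of $X$ in their product after resolving singularities, and with at most one $X$-morphism between two of them since a map to a separated scheme is determined on a dense open --- so the colimits are honest directed colimits, and a morphism $f\colon X\to Y$ with a target compactification $\bar Y$ produces, after compactifying the graph of $f$ and resolving, a source compactification $\bar X\to\bar Y$ over $f$, hence the required maps on colimits (independent of choices, again by separatedness of the targets). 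For the comparison maps, $\Omega^{\bullet}_{\bar X/K}\langle D\rangle=F^{0}\Omega^{\bullet}_{\bar X/K}\langle D\rangle\hookrightarrow g_{*}\Omega^{\bullet}_{X}$ is a quasi-isomorphism of complexes of sheaves on $\bar X$; applying $\tGdm$ (which preserves quasi-isomorphisms of bounded-below complexes, since $\Gdm$ does and is compared to $\tGdm$ by $\int$), then $\Gamma(\bar X,-)$ (a quasi-isomorphism by the above, both sides computing hypercohomology over $\bar X$), then $\colim_{\bar X}$ (exact, being directed), shows $E_{\FdR,0}(X)\to E'_{\dR}(X)$ is a quasi-isomorphism. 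Finally, the direct-factor decomposition $(u_{\bar X})_*u_{\bar X}^{*}(g_{*}\mathcal F)\cong g_{*}(u_{X})_*u_{X}^{*}\mathcal F\oplus(\text{a complement supported on }D)$, iterated and followed by $\widetilde N$, gives a natural monomorphism of complexes of sheaves $g_{*}\tGdm_{X}(\mathcal F)\hookrightarrow\tGdm_{\bar X}(g_{*}\mathcal F)$ restricting to the identity over the open $X$; with $\mathcal F=\Omega^{\bullet}_{X}$, global sections and $\colim_{\bar X}$ this yields a split injection $E_{\dR}(X)\to E'_{\dR}(X)$ whose retraction is the restriction-of-sections map $E'_{\dR}(X)\to E_{\dR}(X)$. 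Since the complement of a normal crossing divisor is locally the non-vanishing locus of one function, $g$ is affine, so $Rg_{*}\Omega^{\bullet}_{X}=g_{*}\Omega^{\bullet}_{X}$ and both $E'_{\dR}(X)$ and $E_{\dR}(X)$ compute $H^{\bullet}_{\dR}(X/K)$; the retraction induces the identity there, hence is a quasi-isomorphism, and therefore so is the split injection $E_{\dR}(X)\to E'_{\dR}(X)$. All the maps are natural in $X$.

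The step I expect to be the main obstacle is the one used repeatedly, namely that $\Gamma(\bar X,\tGdm(-))$ genuinely computes hypercohomology: this rests on Noetherianity of $\bar X$, to pull $\Gamma$ through the infinite Thom--Sullivan limit, and on the comparison $\int$ with the flasque Godement resolution. Once this is in hand, together with the elementary remark that $g$ is affine, all three complexes $E_{\FdR,0}(X)$, $E'_{\dR}(X)$, $E_{\dR}(X)$ are seen to compute $H^{\bullet}_{\dR}(X/K)$ compatibly and the stated quasi-isomorphisms become formal; the remaining work --- well-definedness and functoriality of the colimit over compactifications --- is standard bookkeeping.
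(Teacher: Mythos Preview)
Your argument is correct and follows the same skeleton as the paper's proof: reduce (1) to Proposition~\ref{prop:godement}, then use functoriality in the pair $(X,\bar X)$ and filteredness of the category of normal-crossing compactifications for (2). The paper's proof is five lines and leaves ``the rest follows directly from the definitions''; you have written out those definitions, in particular the construction of the map $E_{\dR}(X)\to E'_{\dR}(X)$ via the direct-summand decomposition of the Godement functor along the open immersion $g$, and the reason it is a quasi-isomorphism (affineness of $g$, hence $Rg_*=g_*$ on quasi-coherent sheaves). All of that is correct.

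One small correction: you single out Noetherianity of $\bar X$ as the ``main obstacle'' in showing $\Gamma(\bar X,\tGdm(-))$ computes hypercohomology, but in fact no finiteness hypothesis is needed there. Global sections, being a right adjoint, commute with arbitrary limits (products and equalizers), and the tensor factors $\omega_m^q$ are finite-dimensional $\QQ$-vector spaces, so $\Gamma$ also commutes with $\omega_m^q\otimes(-)$. Hence $\Gamma(\bar X,\widetilde N B^\bullet(\mathcal F))=\widetilde N\,\Gamma(\bar X,B^\bullet(\mathcal F))$ holds unconditionally, and the comparison with the flasque Godement resolution via $\int$ then gives what you want. The genuine content is rather in the last step: your observation that $g$ is affine (since $D$ is locally a hypersurface) is exactly what is needed to identify $H^*(\bar X,g_*\Omega^\bullet_X)$ with $H^*_{\dR}(X)$, and the paper does not make this explicit.
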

\begin{proof}
	By definition $\Omega^{\bullet}_{\bar X / K}\langle D\rangle$ is a commutative (filtered) dga. Let
	\[
	F^i\tGdm( \Omega^{\bullet}_{\bar X / K}\langle D\rangle) =\tGdm   (F^i \Omega^{\bullet}_{\bar X / K}\langle D\rangle)	\ .
	\]
	Then  $ \tGdm( \Omega^{\bullet}_{\bar X / K}\langle
D\rangle)$  is  a (sheaf of) filtered commutative dga by
Proposition~\ref{prop:godement}. This concludes the proof of point (1).
	
As the complex of sheaves $\Omega^{\bullet}_{\bar X / K}\langle
D\rangle$ is
functorial\footnote{Morphisms of pairs are morphisms of commutative squares.}
with respect to the pair $(X,D)$, the same is true for $F^i\tGdm( \Omega^{\bullet}_{\bar X / K}\langle D\rangle)$.  Note  that the  category of normal crossing
compactifications is filtered. Hence the above colimit is  quasi-isomorphic to any of its
elements. What remains to prove follows directly from the definitions.
\end{proof}
\begin{ex}\label{ex:dlog}
Let $X=\PP^1_K\setminus \{0,\infty\}$.  By construction
$E_{\FdR,1}(X)$ is a complex starting in degree $1$.
Let $\dlog\in\Gamma(\PP^1_K, \Omega^1_{\PP^1_K}\langle
0,\infty\rangle)=H^0(E_{\FdR,1}(X)[1])=H^1(E_{\FdR,1}(X))$ be  the section
defined by $dT/T$, for a local parameter $T$ at $0$. Note that
the class of $d\log$ is a generator for $F^1
H^1_{\dR}({X})\cong K$. We will denote it by $c_1^{\FdR}$.
\end{ex}
\begin{prop}
	There exists a motivic ring spectrum $\E_{\FdR}$ whose components are
the complexes $E_{\FdR,i}$ and such that
	\[
		F^iH^n_{\dR}(X)=\Hom_{\DMt(K,\QQ)}(\un,\E_{\FdR}(i)[n])\ .
	\]
\end{prop}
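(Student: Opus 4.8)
The plan is to invoke the representability criterion of Proposition~\ref{prop:ring_spectrum_existence} with $S=\Spec K$ and $\Lambda=\QQ$, applied to the family of complexes $(E_{\FdR,i})_{i\in\NN}$ of the preceding proposition and to the section $c$ singled out by $dT/T$ in Example~\ref{ex:dlog}. First one must equip $(E_{\FdR,i})_{i\in\NN}$ with an $\NN$-graded commutative monoid structure in $\Comp(\PSh(\Spec K,\QQ))$: for a normal crossing compactification $g:X\hookrightarrow\bar X$ the wedge product gives a morphism of filtered commutative dga's $F^i\Omega^\bullet_{\bar X/K}\langle D\rangle\otimes F^j\Omega^\bullet_{\bar X/K}\langle D\rangle\to F^{i+j}\Omega^\bullet_{\bar X/K}\langle D\rangle$ (using $F^i\wedge F^j\subset F^{i+j}$ for the stupid filtration), which by Proposition~\ref{prop:godement}(3) passes to a graded-commutative product on the Thom--Sullivan--Godement resolutions $\tGdm(F^i\Omega^\bullet_{\bar X/K}\langle D\rangle)$ compatible with the cosimplicial structure; taking global sections over $\bar X$ and the filtered colimit over normal crossing compactifications yields the multiplication maps $\mu_{ij}:E_{\FdR,i}\otimes E_{\FdR,j}\to E_{\FdR,i+j}$ together with the unit $\QQ\to E_{\FdR,0}$. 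The section $c:\QQ[0]\to E_{\FdR,1}(\GG)[1]$ is the degree-one cocycle image of $dT/T\in\Gamma(\PP^1_K,F^1\Omega^\bullet_{\PP^1_K}\langle 0,\infty\rangle)$, as in Example~\ref{ex:dlog}.

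It then remains to check the hypotheses of the criterion; here $H^n(X,E_{\FdR,i})=F^iH^n_{\dR}(X)$ by point~(1) of the preceding proposition. \emph{Excision} ($E_{\FdR,i}$ Nis-local): the Godement resolution is flask, so $E_{\FdR,i}(X)$ computes the Zariski (hence Nisnevich) hypercohomology of the filtered logarithmic de Rham complex, which satisfies Nisnevich (indeed étale) descent, so the cohomology with supports is preserved by excisive morphisms of closed pairs. \emph{Homotopy} ($E_{\FdR,i}$ $\AA^1$-local): compactify $\AA^1\times X$ by $\PP^1\times\bar X$ with boundary $(\{\infty\}\times\bar X)\cup(\PP^1\times D)$; the $\PP^1$-factor contributes the log de Rham complex of $(\PP^1,\{\infty\})$, whose cohomology is $K$ in degree $0$ with vanishing positive Hodge steps, so a Künneth computation gives $F^iH^*_{\dR}(\AA^1\times X)\simeq F^iH^*_{\dR}(X)$. \emph{Stability}: compactify $\GG\times X$ by $\PP^1\times\bar X$ with boundary $(\{0,\infty\}\times\bar X)\cup(\PP^1\times D)$; the $\PP^1$-factor contributes the log de Rham complex of $(\PP^1,\{0,\infty\})$, with $H^0=K$, $H^1=K\cdot\tfrac{dT}{T}$ and $\tfrac{dT}{T}\in F^1$, so Künneth gives $F^{i+1}H^{n+1}_{\dR}(\GG\times X)\simeq F^{i+1}H^{n+1}_{\dR}(X)\oplus F^iH^n_{\dR}(X)\cdot\bar c$, and the map $x\mapsto\pi_X(x\times\bar c)$ is exactly the projection onto the second summand, hence an isomorphism. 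Finally the \emph{orientation} condition holds because the inversion $u$ of $\GG$ sends $dT/T$ to $d(T^{-1})/T^{-1}=-dT/T$, whence $u^*(\bar c')=-\bar c'$ in $\tilde H^1(\GG,E_{\FdR,1})$.

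Granting these, Proposition~\ref{prop:ring_spectrum_existence} produces a ring spectrum $\E_{\FdR}$ (a stably fibrant Tate spectrum) with components $E_{\FdR,i}$ and canonical isomorphisms $\Hom_{\DMt(K,\QQ)}(\sus\QQ(X),\E_{\FdR}(i)[n])\simeq H^n(X,E_{\FdR,i})=F^iH^n_{\dR}(X)$, functorial in the smooth $K$-scheme $X$ and compatible with products; the orientation condition makes $\E_{\FdR}$ a Morel motive, hence a motivic ring spectrum in the sense of Definition~\ref{def:mrs}. The corresponding formula for arbitrary $K$-schemes then follows by h-descent together with de Jong's resolution of singularities, as in the remark at the end of Section~\ref{sec:Bloch-Ogus}. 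I expect the genuine work to be concentrated in the \emph{Excision} step --- confirming that the filtered colimit of the flask Thom--Sullivan--Godement resolutions still computes the correct filtered de Rham hypercohomology and is Nis-local as a complex of presheaves --- together with keeping careful track of the Hodge filtration through the two Künneth computations; once those are secured, the conclusion is a formal application of the criterion.
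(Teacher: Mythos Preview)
Your overall strategy---apply Proposition~\ref{prop:ring_spectrum_existence} to $(E_{\FdR,i})_{i\in\NN}$ with the section coming from $dT/T$---is exactly the paper's, and your treatment of the monoid structure, Stability (via the filtered K\"unneth for $(\PP^1,\{0,\infty\})\times(\bar X,D)$), and Orientation matches the paper essentially verbatim.

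The one substantive divergence is in Excision and Homotopy. The paper does \emph{not} argue these directly: it observes that the unfiltered de Rham presheaf $E_{\dR}$ (hence $E_{\FdR,0}$) is already known to be Nis-local and $\AA^1$-local, and then deduces the same for each $E_{\FdR,i}$ from Deligne's theorem that the Hodge filtration is \emph{strict} for all morphisms of mixed Hodge structures. Strictness guarantees that applying $F^i$ to the relevant long exact sequences (for cohomology with supports, or comparing $X$ and $\AA^1_X$) keeps them exact, so the five-lemma reduces everything to the unfiltered case. Your direct route for Homotopy via the K\"unneth for $(\PP^1,\{\infty\})\times(\bar X,D)$ is fine and parallels the paper's Stability computation. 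But your Excision sketch---``the Godement resolution is flask, so $E_{\FdR,i}(X)$ computes Zariski/Nisnevich hypercohomology and hence satisfies descent''---does not quite land: the flask resolution lives on the \emph{compactification} $\bar X$, not on $X$, and the presheaf $X\mapsto E_{\FdR,i}(X)$ is defined by a colimit over varying compactifications, so flaskness on each $\bar X$ does not by itself yield Nisnevich excision for the presheaf on $\smx K$. You correctly flag this as the point needing work; the missing ingredient is precisely strictness, which is what lets you import Nis-locality from $E_{\FdR,0}$ to the filtered pieces.
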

\begin{proof}
By the previous Lemma the family $E_{\FdR,i}$ forms a  $\NN$-graded commutative
monoid. The $\dlog$ of the above example gives a morphism $\QQ(\mathbb{G}_{m,K})
\to E_{\FdR,1}$.
	According to Proposition~\ref{prop:ring_spectrum_existence} we have to
prove the following.

	(Excision and homotopy) $E_{\FdR,i}$ is both Nis-local and
$\AA^1$-local. We know that
$E_{\dR}$ is Nis$/\AA^1$-local so that the same holds for $E_{\FdR,0}$. The same holds for
$E_{\FdR,i}$ since the canonical maps $E_{\FdR,i}\to E_{\FdR,0} $ induce the
Hodge
filtration on cohomology. Then thanks to the strictness it is easy to conclude
(see also the paragraph following this proof).
	
	(Stability) The cup product with $\dlog=dT/T$ induces an isomorphism
	$$
		H^{n}( E_{i}(X)) \cong H^{n+1}(E_{i+1}(\GG\times
	X))/H^{n+1}(E_{i+1}(X))\ .
	$$
	Let $g:X\to \bar X$ be a normal crossing compactification with
complement $D$. Then $\GG\times X\to \PP^1\times \bar X$ is a
normal crossing compactification with complement $E=\{0,\infty\}\times \bar X
\cup \PP^1\times D$.
	We have  to prove that $\Omega_{\PP^1\times \bar X}\langle
E\rangle=p_1^*\Omega_{\PP^1}\langle 0,\infty\rangle\otimes p_2^*\Omega_{\bar
X}\langle D\rangle$. This can be checked locally by choosing \'etale
coordinates.
Then it is easy
%\footnote{But not really nice to write} 
to prove the filtered
K\"unneth decomposition $F^{i+1}H^{n+1}_{\dR}(\GG\times X)= H^0_{\dR}(\GG)\otimes
F^{i+1}H_{\dR}^{n+1}(X) \oplus H^1_{\dR}(\GG)\otimes F^{i}H_{\dR}^{n}(X)$ since $
F^jH^j_{\dR}(\GG)= H^j_{\dR}(\GG)\cong K$ for $j=0,1$. As $H^j_{\dR}(\GG)=Kd\log$
the claim is proved.

(Orientation) This is obvious: the morphism of $\AA^1\setminus\{0\}$ induced by $T\mapsto 1/T$
sends $dT/T$ to %\footnote{if $f(T)=1/T$ then $df(T)=-dT/T^2$.} 
 $-dT/T$
 as an
element of $H^0(\PP^1_K, \Omega^1_{\PP^1_K}\langle 0,\infty\rangle)\subset
E_{\FdR,1}(\AA^1\setminus \{0\})$.
\end{proof}	
%
%%%%
%%%%
%%%%
%%%%
%
\begin{num}[Variation on dagger spaces]\label{num:dagger} Let $K$ be a $p$-adic field (\textit{i.e.} a finite extension of $\QQ_p$) and let $R$ be its valuation ring. We define a canonical
commutative dga $R\Gamma_{\dR}(X)$ for the de Rham cohomology of a dagger space
$\mathcal{X}$ over $K$.
	 Consider the
following algebra
	\[
		W_n:=\{\sum_\nu a_\nu T^\nu\in K[[T_1,...,T_n]]|\exists \rho>1, \
|a_\nu|\rho^{|\nu|}\to 0\}\ .
	\]
	According to Grosse-Kl\"onne \cite{Gro:00a} a $K$-algebra $A$ is a \emph{dagger algebra}
if it is a quotient of $W_n$ for some $n$. To such an $A$ we can associate the
spectrum of maximal points $\Spm(A)$ which is a $G$-ringed space. One has a
universal $K$-derivation of $A$ into finite $A$-modules, $d :A\to
\Omega^1_{A/K}$ giving  rise to de Rham complex $\Omega_{\mathcal{X}/K}$ on a
general dagger space $\mathcal{X}$. Assuming $\mathcal{X}$ to be smooth we can
set
	\[
		H^n_\dR(\mathcal{X}):=H^n(\mathcal{X},\Omega_{\mathcal{X}/K})
	\]
	It follows from Proposition~\ref{prop:thomsullivan} and
\S~\ref{num:enough_points} that the complex
$R\Gamma_{\dR}(\mathcal{X}):=\Gamma(\mathcal{X},\tGdm\Omega_{\mathcal{X}/K}^\bullet)$
is a functorial commutative dga.

Now let  $X$ be a smooth $R$-scheme. We can associate to it two different dagger
spaces: one is the dagger analytification $(X_K)^\dag$ of its generic fiber; the
other is the Raynaud fiber $(X^w)_K$ of the weakly formal scheme $X^w$
associated to $X$. There is a natural inclusion $(X^w)_K\subset( X_K)^\dag$.
Further there is a map of sites $\iota :(X_K)^\dag\to X_K$ as in the classical
analytification case.
\end{num}
\subsection{Rigid cohomology}
We recall the construction given by Besser as rephrased in \cite{Tam:Kar11} since
there are some simplification. For the sake of the readers we give all the
needed definitions. We fix a  a $p$-adic field  $K$ and denote  by $R$ (resp. $k$)  is its valuation ring (resp. its residue field). %\footnote{It is almost the same as the one we gave with
%Chiarellotto and Ciccioni.}
%But it is a bit more up to date!}
%
%
%
\begin{num}
	After the work Grosse-Kl\"onne one can compute the rigid cohomology of Berthelot via dagger spaces \cite{Gro:00a}. The method is as follows.  Let $X$ be  a  smooth $k$-scheme, then we can choose a closed embedding  $X\to
\mathcal{Y}$ in a weak formal  $R$-scheme %\footnote{Note that $S_0=\Spec(W(k))$ and $S=\Spec(R)$, where $R$ is the valuation ring of a finite extension of $Frac(W(k))$.} 
$\mathcal{Y}$ having smooth special fiber ${\mathcal{Y}}_k$.  We
call such an embedding  a \emph{rigid pair}  and we denote it by
$(X,\mathcal{Y})$.  There is a specialization map $sp:\mathcal{Y}_K\to {\mathcal{Y}}$, where $\mathcal{Y}_K$ is the generic fiber of $\mathcal{Y}$. We write $]X[_{\mathcal{Y}}:= sp^{-1}(X)$, called the \emph{tube of $X$ in } $\mathcal{Y}$.

	A morphism of rigid pairs $(X,\mathcal{Y})$, $(X',\mathcal{Y}')$ is a
commutative diagram
	\begin{equation*}
	\xymatrix{
	]X[_{\mathcal{Y}} \ar[d]_{sp}\ar[r]^{F}&   ]X'[_{\mathcal{Y}'}
\ar[d]^{sp}\\
	X\ar[r]_{f} &  X '}
	\end{equation*}
  We denote  by $\rm RP$ the category of rigid pairs.

 The datum of a rigid pair $(X,\mathcal{Y})$ is sufficient to compute  the rigid
cohomology of $X$ (with $K$ coefficients)  as follows
	\[		H_\rig^n (X/K)= H_\dR^n( ]X[_{\mathcal{Y}}
)=H^n(]X[_{\mathcal{Y}},\Omega^\bullet_{ ]X[_{\mathcal{Y}} /K} )\ .
	\]

The de Rham complex $\Omega^\bullet_{ ]X[_{\mathcal{Y}} /K}$ is
functorial in $(X,\mathcal{Y})$ and its cohomology is independent up to
isomorphism of the choice of $\mathcal{Y}$. Since the tube of $X$ in $\mathcal
Y$ is a smooth dagger space we get $H_\rig^n
(X/K)=H^n(R\Gamma_{\dR}(\tube{X}{\mathcal Y}))$ (see \ref{num:dagger}). 
\end{num}
\begin{prop}\label{prop:rig}
	\begin{enumerate}
		\item For any $p$-adic field $K$ with residue field $k$ there exists   a ring object $R\Gamma_{\rig,K}$ in the
 category $\DMte(\Spec k,\QQ)$ that represents rigid cohomology (with coefficients in $K$): \textit{i.e.} for any
affine and smooth $k$-scheme $X$,  there is a canonical rational commutative dga
$R\Gamma_{\rig,K}(X)$ 
		such that $H^i(R\Gamma_{\rig,K}(X))\cong
H^i_\rig(X/K)$. (The same holds if we replace the coefficient ring $\QQ$ by any field $L$ s.t. $\QQ\subset L\subset K$) 
\item Let $X$ as above and $(X,\mathcal Y)$ be a rigid pair. Then there is
 a commutative dga $\widetilde{R\Gamma}_\rig(X,\mathcal Y)$  together
with a diagram of dga quasi-isomorphisms
		\[
			R\Gamma_{\rig,K}(X)\gets {R\Gamma}_\rig(X,\mathcal{Y})\to
R\Gamma_{\dR}(\tube{X}{\mathcal{Y}})
		\]
		 functorial in the pair $(X,\mathcal{Y})$.
		\item (Base change) Let $\rho:R\to R'$ be a finite map of complete
discrete valuation rings. Let $k$ (resp. $k'$) be the residue field of $R$ (resp. $R'$). Let  $X$ be
a $k$-scheme then there is a canonical (both in $X$ and $R$) quasi-isomorphism 
		\[
			K'\otimes_K R\Gamma_{\rig,K}(X)\rightarrow
R\Gamma_{\rig,K'}(X_{k'})\ .
		\]
		The latter induces an isomorphism in $\DMte(\Spec k,\QQ)$
$$
R\Gamma_{\rig,K} \otimes_K K' \rightarrow f_*(R\Gamma_{\rig,K'})
$$
where $f:\Spec k'\to \Spec k$ is the map induced by $\rho$
 and $R\Gamma_{\rig,?}$ denotes the object of point (1).
		\item There exists a canonical $\sigma$-linear endomorphism of
$R\Gamma_{\rig,K_0}(X)$  inducing the Frobenius on cohomology: it is defined as the composition of
\begin{equation}\label{eq:frobeniusdefinition}
	R\Gamma_{\rig,K_0}(X) \xrightarrow{\id\otimes 1} R\Gamma_{\rig,K_0}(X) \otimes_{\sigma} K_0
	 \xrightarrow{b.c.} R\Gamma_{\rig,K_0}(F^*X)\xrightarrow{rel. Frob.} R\Gamma_{\rig,K_0}(X)
\end{equation}
where $b.c.$ stands for the base change morphism of point (3); $F$ is the Frobenius of $\Spec k$; $F^*X$ is the base change of $X$ via $F$; the last map on the right is the relative Frobenius.
		 %\item  Let $X$ be a smooth affine $R$-scheme. Then
% $(X_k,X^w)$ is a rigid pair (over $K$) and  there is  complex
% $E_\rig(X_k,X^w)$ functorial in $X$ together with a diagram of
%quasi-isomorphisms 
%		\[
		%	E_{\rig,K}(X)\gets E_\rig(X_k,X^w)\to
%R\Gamma_{\dR}(\tube{X_k}{X^w})
%		\]
%		also functorial in $X$.
	\end{enumerate}
\end{prop}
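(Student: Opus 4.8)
The plan is to take Besser's rigid complex, in the streamlined form of \cite{Tam:Kar11}, and promote it to an object of $\DMte(\Spec k,\QQ)$ by running the relevant de Rham complexes through the Godement--Thom--Sullivan machinery of Propositions~\ref{prop:thomsullivan} and~\ref{prop:godement}. I would work throughout on smooth \emph{affine} $k$-schemes, which is harmless since their Nisnevich topos agrees with that of all smooth $k$-schemes. For a rigid pair $(X,\mathcal Y)$ the tube $\tube{X}{\mathcal Y}$ is a smooth dagger space, so $R\Gamma_\rig(X,\mathcal Y):=R\Gamma_{\dR}(\tube{X}{\mathcal Y})$ (in the notation of~\ref{num:dagger}, $=\Gamma(\tube{X}{\mathcal Y},\tGdm\Omega^\bullet)$) is a rational commutative dga, functorial in the pair and with cohomology $H^*_\rig(X/K)$ by Grosse-Kl\"onne's comparison \cite{Gro:00a} and Proposition~\ref{prop:godement}(3). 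To remove the dependence on $\mathcal Y$ I would, following Besser \cite{Bes:Syn00}, assemble these into a functor $R\Gamma_{\rig,K}$ by a colimit over the category of rigid pairs with underlying scheme $X$: any two embeddings are dominated by their fibre product over $R$, and the cited independence-of-embedding results ensure that all structure maps of the resulting system are quasi-isomorphisms, so the colimit is again a commutative dga (filtered colimits being exact and compatible with $\otimes$) with $H^*(R\Gamma_{\rig,K}(X))\cong H^*_\rig(X/K)$; functoriality in $X$ comes from refining rigid pairs along morphisms of $k$-schemes, and nothing changes if $\QQ$ is replaced by any $L$ with $\QQ\subset L\subset K$. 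This gives (1), and (2) is then pure bookkeeping: $R\Gamma_\rig(X,\mathcal Y)$ is a term of the defining colimit, hence quasi-isomorphic to $R\Gamma_{\rig,K}(X)$, and it is quasi-isomorphic to $R\Gamma_{\dR}(\tube{X}{\mathcal Y})$ (comparing, where needed, the Raynaud fibre and the dagger analytification as in~\ref{num:dagger}).

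Next I would check that $R\Gamma_{\rig,K}$, viewed as a presheaf of commutative dgas, lies in $\DMte(\Spec k,\QQ)$ and is a ring object there. Nis-locality amounts to Nisnevich (hence Zariski) descent for rigid cohomology with supports, which follows from the excision and Mayer--Vietoris properties of rigid cohomology --- themselves consequences of Zariski descent for the coherent de Rham complexes on the tubes. The $\AA^1$-locality is the homotopy invariance $H^*_\rig(\AA^1_X/K)\cong H^*_\rig(X/K)$, a theorem of Berthelot. The multiplicative structure is induced by the wedge product on de Rham complexes together with the multiplicativity of $\tGdm$ (Proposition~\ref{prop:godement}(3)) and the identification $\tube{X\times_kX'}{\mathcal Y\times_R\mathcal Y'}\cong\tube{X}{\mathcal Y}\times_K\tube{X'}{\mathcal Y'}$; it is compatible with the colimit, so $R\Gamma_{\rig,K}$ is a commutative monoid, i.e. a ring object of $\DMte(\Spec k,\QQ)$.

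For (3), a finite $\rho:R\to R'$ carries a rigid pair $(X,\mathcal Y)$ over $R$ to the rigid pair $(X_{k'},\mathcal Y\otimes_RR')$ over $R'$, with $\tube{X_{k'}}{\mathcal Y\otimes_RR'}\cong\tube{X}{\mathcal Y}\times_KK'$; since $\Omega^\bullet$ and $\tGdm$ commute with this flat extension one gets a natural map $K'\otimes_K R\Gamma_\rig(X,\mathcal Y)\to R\Gamma_\rig(X_{k'},\mathcal Y\otimes_RR')$ which is the base-change isomorphism on cohomology, and passing to colimits gives the asserted quasi-isomorphism. Nisnevich-sheafifying over $\Spec k$ and using that $f$ is finite (so $f_*$ is exact and is computed by $X\mapsto R\Gamma_{\rig,K'}(X_{k'})$) yields the isomorphism $R\Gamma_{\rig,K}\otimes_KK'\xrightarrow{\sim}f_*R\Gamma_{\rig,K'}$ in $\DMte(\Spec k,\QQ)$. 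For (4), over $K_0$ the absolute Frobenius of $X$ factors as the relative Frobenius $F_{X/k}:X\to F^*X$ followed by the base-change projection $F^*X\to X$; applying the contravariant multiplicative functor $R\Gamma_{\rig,K_0}(-)$ to $F_{X/k}$ and composing with the $\sigma$-twisted instance of the base-change morphism of (3) (taking $R'=R$ and the map $\sigma$ on $W(k)$) produces precisely the composite~\eqref{eq:frobeniusdefinition}; this is a $\sigma$-linear dga endomorphism which induces the usual Frobenius on $H^*_\rig(X/K_0)$ by the very definition of the latter.

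The main obstacle is the first step: producing one construction that is simultaneously strictly functorial in the $k$-scheme $X$ and a strict commutative monoid --- that is, checking that passage to the colimit over rigid pairs is compatible both with pullback along morphisms of $k$-schemes and with products of pairs, and that this colimit genuinely computes rigid cohomology. This is exactly the substance of Besser's construction of the rigid complex and its multiplicative refinement; the remaining points only transport already-known properties of rigid and de Rham cohomology through the Godement--Thom--Sullivan resolutions, whose compatibility with products is Proposition~\ref{prop:godement}(3).
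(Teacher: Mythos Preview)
Your overall strategy matches the paper's: both follow Besser via the dagger reformulation of Tamme, apply the Godement--Thom--Sullivan functor $\tGdm$ to obtain strictly commutative dgas, and defer the delicate points to \cite{Bes:Syn00}. The checks you add for Nis-locality, $\AA^1$-invariance and the multiplicative structure are reasonable and are indeed the known properties of rigid cohomology the paper takes for granted.

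There is, however, a genuine gap in your description of the colimit. You write that the category of rigid pairs $(X,\mathcal Y)$ with $X$ fixed is filtered because ``any two embeddings are dominated by their fibre product''. This only verifies the cofinality condition; it says nothing about coequalizing parallel arrows, and in fact the paper explicitly notes that this naive category is \emph{not} filtered. More seriously, even granting a filtered colimit over such pairs, strict functoriality in $X$ would fail: given $g:Y\to X$ there is no functor from rigid pairs over $X$ to rigid pairs over $Y$ (the direction is wrong), so one cannot simply ``refine rigid pairs along morphisms''. Besser's actual construction, which the paper reproduces, replaces this naive index by the category $\mathrm{SET}^0_X$ whose objects are finite sets of diagrams $X\to X'_a\to\mathcal Y'_a$ (with at least one identity on $X$) and whose morphisms are inclusions; one then takes $R\Gamma_\dR$ of the tube of $X$ in the \emph{product} $\prod_a\mathcal Y'_a$. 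This indexing is filtered by construction, and a morphism $g:Y\to X$ induces a genuine functor $\mathrm{SET}^0_X\to\mathrm{SET}^0_Y$ by precomposition and adjoining a rigid pair for $Y$, which is what yields strict functoriality. The same device handles the monoid structure.

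You do flag this as ``the main obstacle'' in your last paragraph and cite Besser, so you are aware something nontrivial is happening; but the mechanism you describe in the body is not the one that works. Replace the naive colimit by Besser's $\mathrm{SET}^0_X$ construction and the rest of your argument goes through as in the paper.
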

\begin{proof}
	The details are given in \cite[4.9, 4.21, 4.22]{Bes:Syn00}. Since we
adopt the language of dagger spaces there are some formal differences. For the
sake of the readers we give the necessary modifications.	
	To obtain a complex functorial in $X$ we have to take a colimit on some
filtered category. The category of pairs $(X,\mathcal Y)$ with
$X$ fixed is not filtered. Hence we have to introduce the following categories. 
	We define the set $RP_X$ (resp. $RP_{(X,\mathcal Y)}$) of diagrams
$X\xrightarrow{f} X'\to {\mathcal Y}'$ (resp. $(f,F):(X,\mathcal Y)\to
(X',{\mathcal Y}')$ morphism of rigid pairs) where $ (X',{\mathcal Y}')$ is a
rigid pair. Let  $RP_X^0$ (resp. $RP_{(X,\mathcal Y)}^0$) be the subset of
$RP_X$ (resp. $RP_{(X,\mathcal Y)}$) with $f=\id_X$ (resp.
$(f,F)=(\id,\id)$) %\footnote{Indeed $RP_{(X,\mathcal Y)}^0$ has only one element}.
	
	Now we can form the category $SET^0_X$ (resp. $SET^0_{(X,\mathcal{Y})}$)
with objects the finite subsets of $RP_X$ (resp. $RP_{(X,\mathcal Y)}$) having
non-empty intersection with $RP_X^0$ (resp. $RP_{(X,\mathcal Y)}^0$); morphisms
are inclusions. 	 For instance an element of $SET^0_X$ is a finite family
of diagrams  $X\xrightarrow{f_a} X_a'\to {\mathcal Y}_a'$, $a\in A$ (finite
set), such that $f_{a_0}=\id$ for some $a_0 \in A$. To such an object we can
associate the complex $R\Gamma_{\dR}(\tube{X}{{\mathcal Y}_A'})$, where ${\mathcal
Y}_A'=\prod_a {\mathcal Y}_a'$. The categories ${\rm SET}^0_X$ and ${\rm
SET}^0_{(X,\bar X,\sf P)}$ are filtered.
	
	Having this said we define
	\[
		R\Gamma_{\rig,K}(X):=\colim_{A\in {\rm
SET}^0_X}R\Gamma_{\dR}(\tube{X}{{\mathcal Y}_A'})\qquad R\Gamma_{\rig}(X,\mathcal
Y):=\colim_{A\in {\rm SET}^0_{(X,{\mathcal Y})}}R\Gamma_{\dR}(\tube{X}{{\mathcal
Y}_A'})\ .
	\]
Now one can follow word by word the proof of Besser. %Till this point we assumed $X$ to be  affine.% By the work of Chiarellotto and Tsuzuki \cite{ChiTsu:Cohomolog} rigid cohomology satisfies (\'etale and so) Nisnevich  descent. This allows to extend the definition of  $R\Gamma_{\rig,K}(X)$ to any algebraic $k$-scheme.
\end{proof}
%%%%%%
%%%%%
%%%%%
%%%%%%
%%%%%%
%%%%%%%
%%%%%%%%%
%%%%%%%%%
%%%%%%%%%
%%%%%%%%%
%%%%%%%%%
%%%%%%%%%
%

%
%
\begin{prop}\label{prop:rigidspectrum}
There exists a motivic ring spectrum $\E_{\rig,K}$ whose components are all equal to the complex $R\Gamma_{\rig,K}$ and whose stability class is induced by
$\dlog$ such that 
		\[
			H^n_{\rig}(X/K)\cong
\E_{\rig,K}^{n,i}(X):=\Hom_{\DMt(k,\QQ)}(M(X),\E_{\rig,K}(i)[n])\ .
		\]
\end{prop}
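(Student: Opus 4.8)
The plan is to deduce this from the representability criterion of Proposition~\ref{prop:ring_spectrum_existence}, applied over $S=\Spec k$ with $\Lambda=\QQ$ and the \emph{constant} family $E_i:=R\Gamma_{\rig,K}$ for all $i\in\NN$, where $R\Gamma_{\rig,K}$ is the presheaf of commutative dga's on $\smx k$ furnished by Proposition~\ref{prop:rig}(1). The $\NN$-graded commutative monoid structure required in \ref{num:axioms_comm_monoid} is the obvious one: since every $E_i$ equals the single commutative dga $R\Gamma_{\rig,K}$, I take $\eta\colon\QQ\to E_0$ to be its unit and $\mu_{ij}\colon E_i\otimes E_j\to E_{i+j}$ its multiplication, so that the unit, associativity and graded-commutativity diagrams are exactly those of the dga $R\Gamma_{\rig,K}$. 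For the stability section I would use the class $\dlog=dT/T$ as in Example~\ref{ex:dlog}: fixing a rigid pair $(\GG,\mathcal G)$ with $\mathcal G$ a lift of $\GG$ to a smooth weak formal $R$-scheme (the weak completion of $\mathbb G_m$ over $R$), the form $dT/T$ is a global \emph{closed} $1$-form on the dagger space $\tube{\GG}{\mathcal G}$, hence, through the quasi-isomorphisms of Proposition~\ref{prop:rig}(2) and the degree-$0$ inclusion $\Omega^\bullet\hookrightarrow\tGdm(\Omega^\bullet)$, a genuine $1$-cocycle of $R\Gamma_{\rig,K}(\GG)$; this is the desired section $c$ of $E_1[1]$ over $\GG$, and its image $\bar c\in H^1(\GG,E_1)=H^1_\rig(\GG/K)\cong K$ is a generator.

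Then I would verify the three hypotheses of Proposition~\ref{prop:ring_spectrum_existence}. \emph{Excision} and \emph{Homotopy} hold because $R\Gamma_{\rig,K}$ is, by Proposition~\ref{prop:rig}(1), an object of $\DMte(\Spec k,\QQ)$, i.e.\ Nis-local and $\AA^1$-local; moreover Zariski (hence Nisnevich) descent for rigid cohomology propagates the isomorphism $H^n(X,E_i)\simeq H^n_\rig(X/K)$ from affine smooth $X$ to all smooth $k$-schemes. \emph{Stability} carries the real content: for a smooth $k$-scheme $X$ one must see that $x\mapsto\pi_X(x\times\bar c)$ is an isomorphism $H^n_\rig(X/K)\xrightarrow{\sim}\tilde H^{n+1}_\rig(X\times\GG/K)$. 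This follows from the Künneth decomposition $H^*_\rig(X\times\GG/K)\simeq H^*_\rig(X/K)\otimes_K H^*_\rig(\GG/K)$ combined with $H^0_\rig(\GG/K)=K$, $H^1_\rig(\GG/K)=K\cdot\bar c$ and $H^{>1}_\rig(\GG/K)=0$: the map $p^*$ is the inclusion of the first factor, so $\tilde H^{n+1}_\rig(X\times\GG/K)\cong H^n_\rig(X/K)\cdot\bar c$, and cup-product with $\bar c$ realises the isomorphism; since the Künneth isomorphism is multiplicative, the exterior product used here — the one induced by the monoid structure on $(E_i)$ — is the one computing the left-hand side. Finally, \emph{Orientation}: the inversion $u$ of $\GG$ lifts to the inversion of $\mathcal G$ and hence to an automorphism of $\tube{\GG}{\mathcal G}$ pulling $dT/T$ back to $d(T^{-1})/T^{-1}=-dT/T$, so $u^*(\bar c')=-\bar c'$ in $\tilde H^1(\GG,E_1)$, which is condition (ii) of the proposition.

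With these checks in place, Proposition~\ref{prop:ring_spectrum_existence} yields a ring spectrum $\E_{\rig,K}$ — a stably fibrant Tate spectrum with $\E_{\rig,K}(i)\simeq R\Gamma_{\rig,K}$ for $i\ge 0$ — together with functorial, product-compatible isomorphisms $\Hom_{\DMt(k,\QQ)}(M(X),\E_{\rig,K}(i)[n])\simeq H^n(X,E_i)\simeq H^n_\rig(X/K)$; and the Orientation verification is precisely condition (ii), so $\E_{\rig,K}$ is a Morel motive, hence a motivic ring spectrum in the sense of Definition~\ref{def:mrs} and an object of $\DMB(k,\QQ)$, giving the stated formula. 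I expect the main obstacle to be the Stability step: one needs the Künneth formula for the rigid cohomology of products with $\GG$ \emph{at the level of the multiplicative structure carried by the complexes $R\Gamma_{\rig,K}$}, not merely in cohomology, and one must check that $\dlog$ really is an honest cocycle of the presheaf $E_1$; both points are handled, exactly as in the de Rham case, by combining Proposition~\ref{prop:rig} with the Thom--Sullivan (i.e.\ $\tGdm$) model, but they are the only non-formal inputs.
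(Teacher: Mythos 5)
Your proposal is correct and follows the same route as the paper: take the constant family $E_i:=R\Gamma_{\rig,K}$, produce the stability section as the $\dlog$ cocycle transported from the de Rham complex of the weak formal (dagger) lift of $\GG$ via the comparisons of Proposition~\ref{prop:rig}, and then apply Proposition~\ref{prop:ring_spectrum_existence} by checking Excision/Homotopy (from membership in $\DMte$), Stability (via K\"unneth for $X\times\GG$), and Orientation ($u^*(dT/T)=-dT/T$). The paper's own proof is much terser -- after constructing $\dlog$ it simply says to ``argue as in the de Rham case'' -- so your write-up just makes explicit the verifications the paper leaves implicit; your closing worry about needing K\"unneth ``at the level of the complexes'' is slightly overcautious, since the Stability axiom is a statement about cohomology groups and it suffices that the K\"unneth isomorphism be induced by the external product on cohomology.
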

\begin{proof}
 We have to verify the hypothesis of
Proposition~\ref{prop:ring_spectrum_existence} for the family
$E_i:=R\Gamma_{\rig,K}$. First we need to define a morphism of complexes $\QQ[0]\to
R\Gamma_{\rig,K}(\mathbb{G}_{m,k})(1)[1]$. We argue as in the de Rham case.
Let us denote by $X=\mathbb{G}_{m,R}$.
Then the de Rham cohomology of the dagger
space $(X^w)_K$ computes the
($K$-linear) rigid cohomology of  $X_k=\mathbb{G}_{m,k}$ and there is a
canonical map from
$R\Gamma_{\dR}((X^w)_K)$ to $R\Gamma_{\rig,K}(X_k)$. We can apply the construction of
\ref{prop:godement} to the inclusion $\Omega^1_{(X^w)_K/K}[-1]\subset
\Omega^1_{(X^w)_K/K}$ and we obtain (as in example~\ref{ex:dlog}) an element
$\dlog$ of $R\Gamma_{\dR}((X^w)_K)$ of degree 1.
\end{proof}

\begin{rem}\label{rem:bc_spectrum_Erig}
With the notations of point (3) of Proposition \ref{prop:rig},
 one gets a canonical base change isomorphism in $\DMB(k)$:
$$
\E_{\rig,K} \otimes_K K' \xrightarrow{\ \sim\ } f_*(\E_{\rig,K'}).
$$
In the sequel, we will simply denote by $\E_{\rig}$
 (resp. $R\Gamma_{\rig}$)
 the ring spectrum $\E_{\rig,K_0}$
 (resp. the complex $R\Gamma_{\rig,K_0}$).
\end{rem}
\subsection{Absolute rigid cohomology}
\begin{num}
Along the lines of \cite{Beu:Not86} and \cite{Ban:Syn02} we are going to define the analogue of absolute Hodge theory in the setting of rigid cohomology. Let $k$ be a perfect field of characteristic $p$. We denote by $\Fisoc$ the category of $F$-isocrystals (defined over $k$): \textit{i.e.} finite dimensional $K_0$-vector spaces together with a $\sigma$-linear automorphism. This is a tensor category with unit object $\un$ given by $K_0$ together with $\sigma$. For any $I\in \Fisoc$ we denote by $I(n)$ the $F$-isocrystal having the same vector space $I$ and Frobenius multiplied by $p^{-n}$. We would like to define the absolute rigid cohomology of a $k$-scheme $X$ as follows
$$	
H^n_{\phi}(X,i):=\Hom_{D^b(\Fisoc)}(\un,R\Gamma(X)(i)[n])
$$
where $R\Gamma(X)$ is a complex of $F$-isocrystals such that $H^n(R\Gamma(X))=H_\rig^n(X)$ together with its Frobenius endomorphism. Since we do not know how to construct directly $R\Gamma$ we follow the strategy of Beilinson in \textit{loc.cit.} and deduce its existence from proposition~\ref{prop:rig}.

Let $C^b_\rig$ be the category of bounded complexes of $K_0$-vector spaces $M$ together with a quasi-isomorphism $\phi:M^\sigma =M\otimes_{K_0,\sigma}K_0 \to M$. We define homotopies  (resp. quasi-isomorphisms) between objects in $C^b_\rig$ to be morphisms in $C^b_\rig$ such that they are homotopies (resp. quasi-isomorphisms) of the underling complexes of $K_0$-vector spaces. Then we can define the category $K^b_\rig$ to be the category $C^b_\rig$ modulo the null-homotopic morphisms. 
\end{num}
\begin{lm}
\begin{enumerate}
	\item the category $K^b_\rig$ is triangulated;
	\item  the localization $K^b_\rig[\mathcal{A}^{-1}]$ of the category $K^b_\rig$ 
    by the subcategory $\mathcal{A}$ of acyclic objects exists and it is a triangulated category too;
    \item let $D^b_\rig\subset K^b_\rig[\mathcal{A}^{-1}] $ be the full subcategory of complexes whose cohomology objects (w.r.t. the usual $t$-structure on complexes) are in $\Fisoc$. Then there is a natural equivalence of categories $\iota: D^b(\Fisoc)\to D^b_\rig$.
   	
\end{enumerate}
\end{lm}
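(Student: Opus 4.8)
The plan is to recognise the lemma as an instance of Beilinson's method for building a derived category out of complexes carrying an auxiliary structure — here the Frobenius-semilinear datum $\phi$ — following \cite{Beu:Not86}, \cite{Ban:Syn02} and \cite{ChiCicMaz:Cyc10}; the semilinearity is harmless because $\sigma$ is an automorphism (so $\Fisoc$ is abelian and $M\mapsto M^\sigma=M\otimes_{K_0,\sigma}K_0$ is exact). For part (1), I would check that $K^b_\rig$ is triangulated by copying, almost verbatim, the proof that the homotopy category of bounded complexes over an additive category is triangulated. Two facts make this work: (i) $(-)^\sigma$ is exact, hence commutes with the shift and the mapping cone of complexes of $K_0$-vector spaces — so $(M[1],\phi[1])\in C^b_\rig$, and for a morphism $f$ of $C^b_\rig$ the cone of the underlying chain map inherits a map from its $\sigma$-twist (functorially in $f$, using that $f$ is $\phi$-compatible), which is a quasi-isomorphism by the five lemma on long exact cohomology, so $\Cone(f)\in C^b_\rig$; (ii) quasi-isomorphisms of complexes of $K_0$-vector spaces are stable under cones. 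Declaring the distinguished triangles to be those isomorphic in $K^b_\rig$ to $M\xrightarrow{f}N\to\Cone(f)\to M[1]$, one verifies (TR1)--(TR4) as in the classical case: every morphism of triangles and every homotopy produced in that proof is built functorially from the given maps and identities, hence is $\phi$-compatible, and in particular is a homotopy of the underlying complexes — which is exactly the notion of homotopy used to form $K^b_\rig$ — so nothing has to be re-checked.

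For part (2), the full subcategory $\mathcal A\subset K^b_\rig$ of acyclic objects is a thick triangulated subcategory: it is stable under the shift and direct summands, and the third vertex of a distinguished triangle with two acyclic vertices is acyclic (long exact cohomology of the underlying complexes). Since we work with finite-dimensional $K_0$-vector spaces, $K^b_\rig$ is essentially small, so the Verdier quotient $K^b_\rig[\mathcal A^{-1}]:=K^b_\rig/\mathcal A$ exists, is triangulated, and (by the octahedron) coincides with the localisation of $K^b_\rig$ at the quasi-isomorphisms (morphisms with acyclic cone), which admits a calculus of left and right fractions. For part (3), the functor $\iota$ is induced by the fully faithful embedding $C^b(\Fisoc)\hookrightarrow C^b_\rig$ — a bounded complex of $F$-isocrystals being exactly an object $(M,\phi)$ with $\phi$ \emph{degreewise} bijective — after passing to homotopy categories and inverting quasi-isomorphisms; it lands in $D^b_\rig$ since the cohomology of a complex of $F$-isocrystals is an $F$-isocrystal. (Note that, $\phi$ being a quasi-isomorphism for every object of $C^b_\rig$, each $H^k(M)$ is automatically an $F$-isocrystal, so $D^b_\rig$ is in fact all of $K^b_\rig[\mathcal A^{-1}]$.) I would then equip $D^b_\rig$ with its standard $t$-structure: the truncation functors are the good truncations $\tau_{\le n}$, which lift to $C^b_\rig$ because $(-)^\sigma$ is exact and $\tau_{\le n}\phi$ is again a quasi-isomorphism, the orthogonality axiom being a calculus-of-fractions computation; its heart is $\Fisoc$, an object of $D^b_\rig$ with cohomology only in degree $0$ being isomorphic, via the truncation quasi-isomorphisms, to that $F$-isocrystal placed in degree $0$, and $\Hom_{D^b_\rig}(V,W)=\Hom_{\Fisoc}(V,W)$ for $V,W\in\Fisoc$ — here one uses that a $\phi$-compatible chain map vanishing on $H^0$ is already naively null-homotopic, so the colimit over resolutions concentrated in non-positive degrees computes $\Hom_{\Fisoc}(V,W)$ directly.

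Since $\iota$ is $t$-exact and induces this identification of hearts, and since every object of $D^b_\rig$ is (via the truncation triangles) a finite iterated extension of shifted $F$-isocrystals, so that $D^b_\rig$ is generated by $\Fisoc$ and its shifts, essential surjectivity of $\iota$ is automatic once $\iota$ is fully faithful, and full faithfulness reduces by dévissage (the five lemma on the long exact $\Hom$-sequences attached to the truncation triangles) to showing that $\Ext^j_{\Fisoc}(V,W)\to\Hom_{D^b_\rig}(V,W[j])$ is an isomorphism for all $V,W\in\Fisoc$ and all $j$. For $j<0$ both sides vanish; for $j=0$ this is the identification of hearts above; for $j=1$ it is automatic, both sides classifying extensions in the abelian category $\Fisoc$ compatibly. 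So the whole proof of the equivalence comes down to the case $j\ge 2$.

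The case $j\ge 2$ is where I expect the real work, and it displays the recurring subtlety of the construction: homotopies in $C^b_\rig$ ignore $\phi$, so one must check that inverting quasi-isomorphisms still produces the correct $\Hom$-groups. On one side, $\Ext^j_{\Fisoc}(V,W)=0$ for $j\ge 2$ because $\Fisoc$ is the category of finite-dimensional modules over the twisted Laurent polynomial ring $K_0[t^{\pm 1};\sigma]$ (with $ta=\sigma(a)t$), which is hereditary. On the other side one must show $\Hom_{D^b_\rig}(V,W[j])=0$ for $j\ge 2$, which I would do via the calculus of fractions using a length-two resolution $0\to Q^{-1}\to Q^0\to V\to 0$ of $V$ in $\Fisoc$ (available by hereditarity), the delicate point being that the naive null-homotopies must be shown not to spoil the computation. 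A cleaner packaging of all of part (3), which I might well adopt, is to embed $C^b_\rig$ into the category of bounded complexes over the abelian category $\mathcal P$ of finite-dimensional modules over the (also hereditary) twisted polynomial ring $K_0[t;\sigma]$, inside which $\Fisoc$ is the Serre subcategory of modules on which $t$ acts invertibly (closure under subobjects, quotients and extensions follows from the five lemma for $\sigma$-twists). Then $\iota$ factors as $D^b(\Fisoc)\xrightarrow{\ \sim\ }D^b_{\Fisoc}(\mathcal P)\to D^b_\rig$, where $D^b_{\Fisoc}(\mathcal P)$ is the full subcategory of $D^b(\mathcal P)$ of complexes with cohomology in $\Fisoc$: the first equivalence holds because $\Ext^*_{\Fisoc}\to\Ext^*_{\mathcal P}$ is an isomorphism (both categories are hereditary and $\Fisoc$ is extension-closed in $\mathcal P$), and the second — the genuinely technical step, where I expect to spend the most effort — holds because the two localisations of $C^b_\rig$ by the naive and by the $\phi$-linear homotopy relations coincide after inverting quasi-isomorphisms; concretely one reduces this to showing that the cocycle $\psi h^\sigma-h\phi$ measuring the defect of $\phi$-linearity of a naive homotopy $h$ becomes a coboundary after enlarging the resolution, using the semisimplicity of $K_0$-vector spaces. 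I would model this last argument on the corresponding step in \cite{ChiCicMaz:Cyc10}, itself an adaptation of \cite{Beu:Not86}.
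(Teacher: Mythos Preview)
Your overall strategy --- reduce part (3) via a $t$-structure argument and d\'evissage to the comparison $\Ext^j_{\Fisoc}(V,W)\cong\Hom_{D^b_\rig}(V,W[j])$ for $V,W\in\Fisoc$ --- is the Bannai/Beilinson pattern that the paper itself invokes, and your observation that $\Fisoc$ (finite modules over the skew Laurent ring $K_0[t^{\pm1};\sigma]$) is hereditary is correct and useful. The paper's own argument is, however, more direct: rather than d\'evissage it writes down closed formulas for the Hom groups on both sides,
\[
\Hom_{D^b(\Fisoc)}(M,N[i])\cong H^{i-1}(\Cone\xi_{M,N}),\qquad
\Hom_{D^b_\rig}(M,N[i])\cong H^{i-1}(\Cone\xi'_{M,N}),
\]
with $\xi_{M,N}(x)=x-\phi_{M,N}x$ and $\xi'_{M,N}(x)=x\phi_M-\phi_N(x\otimes_\sigma 1)$, and then checks that for $M,N\in\Fisoc$ the two cones are isomorphic, so $\iota$ is fully faithful. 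This bypasses the $t$-structure machinery entirely; your route trades that one computation for several structural verifications.

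There is one point where your proposal goes astray and which you should repair. Your reading that ``homotopies in $C^b_\rig$ ignore $\phi$'' cannot be right. If homotopies were merely $K_0$-linear, then every acyclic object of $K^b_\rig$ would already be zero (bounded complexes of vector spaces are formal, so $\id_M$ is $K_0$-null-homotopic whenever $M$ is acyclic), hence $D^b_\rig=K^b_\rig$; but then $\Hom_{D^b_\rig}(V,W[1])=0$ for $V,W\in\Fisoc$ (there are no nonzero chain maps between objects concentrated in distinct single degrees), contradicting part (3) whenever $\Ext^1_{\Fisoc}(V,W)\neq 0$ --- already for $V=W=\un$ over $k=\mathbb F_p$. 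The paper's cone formula for $\Hom_{D^b_\rig}$ confirms the correct picture: a class is represented by a pair $(a,b)$ with $a$ a chain map and $b$ a homotopy witnessing $a\phi_M\sim\phi_N a^\sigma$, so the homotopy relation defining $K^b_\rig$ must be the $\phi$-compatible one. Consequently your ``cleaner packaging'', which rests on the naive and $\phi$-linear homotopy relations yielding the same localization, does not hold as stated; and the subtlety you locate at $j\ge 2$ is in fact a non-issue (both sides vanish by the two-term cone formulas), while the genuine content sits at $j\le 1$, precisely where the paper's explicit computation does the work.
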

\begin{proof}
We leave to the reader to check that all the arguments given in \cite[\S\S 1,2]{Ban:Syn02} (or \cite[\S~2]{ChiCicMaz:Cyc10}) can be adapted to our (much simpler) setting. We limit ourselves to make explicit the formulas for the $\Hom$ groups in $D^b(\Fisoc), D^b_\rig$.

Let $M,N$ be two bounded complexes of $F$-isocrystals. Remind that $\Fisoc$ has internal Hom so that we can form the internal Hom complex $\uHom^\bullet(M,N)$ with Frobenius $\phi_{M,N}$. Consider the following morphism of $\QQ_p$-linear\footnote{These are not $K_0$-linear since (in general) the Frobenius is not.} complexes
$$
\xi_{M,N}:\uHom^\bullet(M,N)\to \uHom^\bullet(M,N),\quad x\mapsto x-\phi_{M,N}x\ . 
$$ 
Then we can prove as in \cite[proposition~1.7]{Ban:Syn02} that
\begin{equation}
\Hom_{D^b(\Fisoc)}(M,N[i])\cong H^{i-1}(\Cone \xi_{M,N})\ .
\end{equation}

Similarly given two complexes $M,N$ in $C_\rig^b$ we define the morphism of complexes
$$
\xi_{M,N}':\Hom^\bullet(M,N)\to \Hom^\bullet(M^\sigma,N),\quad x\mapsto x\circ \phi_M-\phi_{N}\circ (x\otimes_\sigma 1)\ . 
$$
\footnote{Mind that we cannot use $\uHom$ because there is no internal Hom in $C^b_\rig$. This is due to the fact that the Frobenius is only a quasi-isomorphism.}Then the Hom groups in $D_\rig^b$ can be computed as follows
\begin{equation}\label{eq:extdbrig}
\Hom_{D^b_\rig}(M,N[i])\cong H^{i-1}(\Cone \xi_{M,N}')\ .
\end{equation}

Now it is easy to check that given two $F$-isocrystals $M,N$ we have
\begin{equation}\label{eq:extfisoc}
	\Ext^i_{\Fisoc}(M,N)\cong \Hom_{D^b_\rig}(\iota M,\iota N[i])
\end{equation}
and the faithfulness of $\iota$ follows.
\end{proof}
\begin{df}
Let $X$ be an algebraic $k$-scheme. We define the absolute rigid cohomology as 
$$	
H^n_{\phi}(X,i):=\Hom_{D^b_\rig}(\un,R\Gamma_\rig(X)(i)[n]) \ .
$$
It follows from the equivalence $\iota$ of the above lemma that the same formula holds in  $D^b(\Fisoc)$ for some object $R\Gamma(X)$ corresponding to $R\Gamma_\rig(X)$. 
% By the formula \eqref{eq:extfisoc} there is a natural spectral sequences
% \begin{equation}\label{eq:spseq absolute rigid}
% E^{pq}_2=\Ext^p_{\Fisoc}(\un,H^q(X)(i))\ \Rightarrow   H^{p+q}_{\phi}(X,i)\ .
% \end{equation}
\end{df}
\begin{cor} \label{cor:absolute_rig_sequence}
	There is a natural spectral sequence
	\begin{equation}\label{eq:spseq absolute rigid}
	E^{pq}_2=\Ext^p_{\Fisoc}(\un,H^q(X)(i))\ \Rightarrow   H^{p+q}_{\phi}(X,i)\ 
	\end{equation}
  degenerating to the following short exact sequence
 \[
0\to H^1_\rig(X)/\Im (\id-\phi/p^i )  \to H^{n,i}_\phi(X)\to H^n_\rig(X)^{\phi=p^i}\to 0\ .
\] 
\end{cor}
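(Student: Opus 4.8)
The plan is to apply the standard hypercohomology (truncation) spectral sequence for the cohomological functor $\Hom_{D^b(\Fisoc)}(\un,-)$ to the object $R\Gamma(X)(i)$. Since $\Fisoc$ is an abelian category, $D^b(\Fisoc)$ is a genuine bounded derived category, and filtering $R\Gamma(X)$ by its canonical truncations $\tau_{\le q}$ produces a bounded spectral sequence
\[
E_2^{pq}=\Ext^p_\Fisoc\big(\un,H^q(R\Gamma(X))(i)\big)\ \Longrightarrow\ \Hom_{D^b(\Fisoc)}\big(\un,R\Gamma(X)(i)[p+q]\big).
\]
By construction $H^q(R\Gamma(X))=H^q_\rig(X)$ endowed with its Frobenius, and the abutment is $H^{p+q}_\phi(X,i)$ by the very definition of absolute rigid cohomology; this is the asserted spectral sequence \eqref{eq:spseq absolute rigid}. (One may equally argue inside $D^b_\rig$ through the equivalence $\iota$ of the previous lemma together with the identification $\Ext^\bullet_\Fisoc\cong\Hom_{D^b_\rig}$ provided by \eqref{eq:extfisoc}.)

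The crucial input is that $\Fisoc$ has cohomological dimension $\le 1$. Indeed, for $F$-isocrystals $M,N$, formulas \eqref{eq:extfisoc} and \eqref{eq:extdbrig} give $\Ext^p_\Fisoc(M,N)\cong H^{p-1}(\Cone\xi'_{M,N})$, and $\Cone\xi'_{M,N}$ is concentrated in degrees $-1$ and $0$ because both $\Hom^\bullet(M,N)$ and $\Hom^\bullet(M^\sigma,N)$ sit in degree $0$; hence $\Ext^p_\Fisoc(M,N)=0$ for $p\ge 2$. Therefore $E_2^{pq}=0$ unless $p\in\{0,1\}$: the spectral sequence has only two nonzero columns, every higher differential vanishes, and it degenerates at $E_2$. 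A two-column degenerate spectral sequence yields, for each $n$, a short exact sequence
\[
0\longrightarrow E_2^{1,n-1}\longrightarrow H^n_\phi(X,i)\longrightarrow E_2^{0,n}\longrightarrow 0.
\]

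It then remains to identify the two outer terms. A morphism $\un\to(V,\psi)$ in $\Fisoc$ amounts to an element $v\in V$ with $\psi(v)=v$; applied to $N=H^n_\rig(X)(i)$, whose Frobenius is $\phi/p^i$, this gives $E_2^{0,n}=\Hom_\Fisoc(\un,H^n_\rig(X)(i))=H^n_\rig(X)^{\phi=p^i}$. Similarly, taking $M=\un$ and $N=H^{n-1}_\rig(X)(i)$ in \eqref{eq:extfisoc}/\eqref{eq:extdbrig}, one has $E_2^{1,n-1}=\Ext^1_\Fisoc(\un,N)\cong H^0(\Cone\xi'_{\un,N})=\Coker\xi'_{\un,N}$; here $\Hom^\bullet(\un,N)$ and $\Hom^\bullet(\un^\sigma,N)$ are both the underlying space of $N$ placed in degree $0$, and $\xi'_{\un,N}$ becomes the $K_0$-linear map $x\mapsto x-\phi(x)/p^i=(\id-\phi/p^i)(x)$, whence $E_2^{1,n-1}=H^{n-1}_\rig(X)/\Im(\id-\phi/p^i)$. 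Substituting these values gives the displayed short exact sequence. The essential ingredients (the truncation spectral sequence and the bound $\mathrm{cd}(\Fisoc)\le 1$) are both at hand from the preceding lemma, so the only point that really needs care is this last identification: tracking the Frobenius normalization through $\iota$ and the Tate twist, so that $\xi'$ turns precisely into $\id-\phi/p^i$ with exactly the factor $p^i$ and the sign appearing in the statement.
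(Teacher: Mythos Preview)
Your proof is correct and follows essentially the same approach as the paper: the paper's two-sentence argument invokes \eqref{eq:extfisoc} for the existence of the spectral sequence and \eqref{eq:extdbrig} for the vanishing of the columns $p\ge 2$, which is exactly what you unpack in detail. Your explicit identification of the outer terms (in particular tracking $\xi'_{\un,N}$ down to $\id-\phi/p^i$) is left implicit in the paper, and you correctly obtain $H^{n-1}_\rig(X)$ in the leftmost term, fixing what is evidently a typo ($H^1_\rig$) in the displayed short exact sequence.
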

\begin{proof}
The existence of the spectral sequence follows from the formula \eqref{eq:extfisoc}.  By \eqref{eq:extdbrig} it is   concentrated in the columns $p=0,1$ so that it gives
 short exact sequences.
\end{proof}

\begin{prop}\label{prop:absrigidspectrum}
There exists a motivic ring spectrum $\E_{\phi}\in\DMB(k)$ representing the absolute rigid cohomology, \textit{i.e.} 
		\[
			H^n_{\phi}(X,i)\cong
\E_{\phi}^{n,i}(X):=\Hom_{\DMB(k)}(M(X),\E_{\phi}(i)[n])\ .
		\]
	\end{prop}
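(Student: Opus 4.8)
The plan is to realise $\E_\phi$ as a homotopy equalizer, formed \emph{inside the category of ring spectra}, of the identity and a normalised Frobenius on the rigid cohomology spectrum $\E_\rig$ of Proposition~\ref{prop:rigidspectrum}. Recall from Proposition~\ref{prop:rig}(4) that $\E_\rig=\E_{\rig,K_0}$ carries a canonical $\sigma$-linear ring endomorphism $\phi$, defined by an honest composite of dga maps (base change and relative Frobenius) at the level of the underlying complex of presheaves $R\Gamma_\rig$; as $\sigma$ restricts to the identity on $\QQ$, this $\phi$ is a strict morphism of the $\QQ$-linear ring spectrum $\E_\rig$. First I would upgrade $\phi$ to a morphism of Tate spectra $\Phi\colon\E_\rig\to\E_\rig$ whose component on the $i$-th level $E_i=R\Gamma_\rig$ is $p^{-i}\phi$. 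One checks this is a morphism of ring spectra: it commutes with the multiplication maps $\mu_{i,j}$ since $\phi$ is multiplicative and $p^{-i}p^{-j}=p^{-(i+j)}$, and it commutes with the suspension maps $\sigma_i\colon E_i(1)\to E_{i+1}$ since the stability class $c=\dlog$ is induced by the Frobenius lift $t\mapsto t^p$ on $\GG$, whence $\phi(\dlog)=p\cdot\dlog$ already at the level of complexes (cf.\ Example~\ref{ex:dlog} and the proof of Proposition~\ref{prop:rigidspectrum}).

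Next I would set $\E_\phi:=\holim\big(\E_\rig\rightrightarrows\E_\rig\big)$, the homotopy limit taken in the model category $\Spr(k,\QQ)$ (Theorem~\ref{thm:model_ring_sp}) over the equalizer diagram whose two arrows are $\id$ and $\Phi$. This is a ring spectrum by construction. Since the forgetful functor $U\colon\Spr(k,\QQ)\to\Sp(k,\QQ)$ creates weak equivalences and fibrations it is right Quillen, hence preserves homotopy limits; so the underlying object of $\E_\phi$ in $\DMt(k,\QQ)$ is the homotopy fibre of $\id-\Phi$, fitting in a distinguished triangle
\begin{equation*}
\E_\phi\longrightarrow\E_\rig\xrightarrow{\ \id-\Phi\ }\E_\rig\longrightarrow\E_\phi[1].
\end{equation*}
Moreover $\E_\phi$ is a Morel motive: the endofunctor $\E\mapsto\E_-$ of $\DMt(k,\QQ)$ -- a direct summand of the identity, hence triangulated (cf.\ Paragraph~\ref{num:DMB}) -- commutes with the homotopy fibre and kills $\E_\rig$, so it kills $\E_\phi$ as well. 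Thus $\E_\phi$ is a motivic ring spectrum in $\DMB(k)$ (Definitions~\ref{df:morel} and~\ref{def:mrs}). Alternatively one could appeal to the general principle, also used later for $\E_\syn$, that a homotopy limit of motivic ring spectra is a motivic ring spectrum.

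It then remains to identify the cohomology. Applying $\Hom_{\DMB(k)}(M(X),-(i)[n])$ for all $n\in\ZZ$ to the triangle above, and using $\Omega^\infty(\E_\rig(i))=E_i=R\Gamma_\rig$ (formula~\eqref{eq:omega}), on which $\Phi(i)$ induces $p^{-i}\phi$, one obtains a long exact sequence identifying $\E_\phi^{n,i}(X)$ with $H^{n-1}$ of the cone of $\id-p^{-i}\phi$ acting on the complex $R\Gamma_\rig(X)$. By the lemma preceding the statement -- see formulas~\eqref{eq:extdbrig} and~\eqref{eq:extfisoc} -- this same cone computes $\Hom_{D^b_\rig}(\un,R\Gamma_\rig(X)(i)[n])=H^n_\phi(X,i)$, so $\E_\phi^{n,i}(X)\cong H^n_\phi(X,i)$; functoriality in $X$ and compatibility with products are built into the construction, and the short exact sequence of Corollary~\ref{cor:absolute_rig_sequence} falls out of the long exact sequence once one uses the vanishing of the higher $\Ext$-groups in $\Fisoc$.

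The step I expect to be the main obstacle is the \emph{rigidification} in the first paragraph: one must be certain that $\phi$, and hence $\Phi=(p^{-i}\phi)_{i}$, is a morphism of ring spectra in the strict sense -- a morphism of commutative monoids in $\Sp(k,\QQ)$, not merely in the homotopy category -- for otherwise the homotopy limit in $\Spr(k,\QQ)$ is not available. Granting the chain-level description of the Frobenius from Proposition~\ref{prop:rig}(4) and the exact identity $\phi(\dlog)=p\,\dlog$, the rest consists of routine diagram chases with the monoid axioms and of formal manipulations in the triangulated category $\DMt(k,\QQ)$.
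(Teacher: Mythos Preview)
Your proposal is correct and follows essentially the same route as the paper: define $\Phi$ levelwise as $\phi/p^i$, verify it is a strict morphism of ring spectra via the identity $\phi(\dlog)=p\,\dlog$, take the homotopy equalizer of $\id$ and $\Phi$ in $\Spr(k,\QQ)$, and then identify the cohomology by comparing the resulting cone with formula~\eqref{eq:extdbrig}. The paper's proof is terser on the points you spell out (preservation of homotopy limits under the forgetful functor, and why $\E_\phi$ is a Morel motive), but the argument is the same.
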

\begin{proof}
 By point (4) of proposition \ref{prop:rig} we can define a family of morphism of presheaves of complexes
	\[
		R\Gamma_\rig\xrightarrow{\phi/p^i}R\Gamma_\rig \ .
	\]
	We claim that the latter induces
	%} 
	%
	%
	a morphism of ring spectra
	\[
			\E_\rig\xrightarrow{\Phi}\E_\rig\ .
	\]
	Indeed it is sufficient to notice that $(\phi\otimes 1)\circ \dlog =p \dlog$, where $\dlog:\QQ(\GG)[-1]\to \E_\rig(1)$  is the stability class of the rigid spectrum.
	
	Now we can define $\E_{\phi}$ to be  the homotopy limit %\footnote{I would say a cofibrant replacement of the homotopy limit to be precise} 
of the
following diagram of ring spectra
\begin{equation} \label{eq:Ephi}
\xymatrix{
\E_{\rig}\ar@<3pt>^{\Phi}[r]\ar_{\id}[r] & \E_{\rig}.
}
\end{equation}

the limit exists by  \ref{num:homotopy_limits}.

To conclude the proof  note that  $\E_{\phi,i}$ is quasi-isomorphic to the cone $\Cone(\id -\phi/p^i)$ (up to a shift!).  Then  it is sufficient to compare \eqref{eq:extdbrig} and  \eqref{eq:compute_rep}.
\end{proof}

\begin{rem}
According to the preceding proof,
 one gets a canonical distinguished triangle of $\DMB(k)$:
\begin{equation} \label{eq:triangle_Ephi}
\E_\phi \rightarrow \E_{\rig} \xrightarrow{\id-\Phi} \E_\rig
 \xrightarrow{+1}
\end{equation}
which induces the short exact sequences of the preceding Corollary.
In particular, these exact sequences are functorial with respect
 to the motive of $X$.
\end{rem}

\subsection{Syntomic cohomology}
\begin{num} \label{num:syntomicrecall}
	Let $X$ be a smooth $R$-scheme. With the notation of \S~\ref{num:dagger}, there is a map of commutative dga
	\[
		\spe_X:E_\dR(X_K)\rightarrow R\Gamma_{\dR}((X^w)_K)=R\Gamma_\rig(X_k,X^w)
	\]
	inducing the specialization on cohomology and functorial in $X$. Details can be found in
\cite[\S\S~3.3,5.3]{Tam:Kar11}.
	
	Now we can recall the definition of syntomic cohomology $H^n_\syn(X,i)$
of $X$: it  is the cohomology of a complex $R\Gamma_\syn(X,i)$ defined as the
homotopy limit of the following diagram
	\begin{equation*}
	\xymatrix@C=-12pt{
	& R\Gamma_\rig(X_k)& & R\Gamma_{\rig,K}(X_k) & &R\Gamma_{\dR}(\tube{X_k}{X_w})  & &
E_\dR'(X_K)\\
 	R\Gamma_\rig(X_k)\ar[ru]^\id\ar@{=}[rr] & & \ar[lu]^{\phi/p^i} R\Gamma_\rig(X_k)\ar[ru]& &
\ar[lu] R\Gamma_\rig(X_k,X^w)\ar[ru] & & \ar[lu] E_\dR(X_K)\ar[ru]& &\ar[lu]
E_{\FdR,i}(X_K) } \ .
	\end{equation*} 
	(cf. \cite{Bes:Syn00}, \cite{ChiCicMaz:Cyc10}). To be precise Besser uses the cone of $\phi-p^i\id$ instead of $\id -\phi/p^i$.
	\end{num}
%%%
%
%
	\begin{prop}\label{prop:syntomicspectrum}
Let $R$ be the valuation ring of a $p$-adic field $K$, then
there exists a ring spectrum
$\E_{\syn}$ in $\DMB(R,\QQ_p)$
 representing  the syntomic cohomology defined by Besser, i.e. 
for any smooth $R$-scheme $X$ and any integer $n$, 
 there is a canonical isomorphism 
\[
	H^n_{\syn}(X,i)\cong
\E_{\syn}^{n,i}(X):=\Hom_{\DMB(R,\QQ_p)}(M(X),\E_{\syn}(i)[n])\ .
\]	

In particular all the results of section 2 apply to syntomic cohomology.
	\end{prop}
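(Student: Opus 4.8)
The strategy is to realize $R\Gamma_\syn(-,i)$ as the homotopy limit of a diagram of motivic ring spectra and then to push the representability statement through that homotopy limit, exactly as announced in the introduction (``a homotopy limit of motivic ring spectra is a motivic ring spectrum''). First I would observe that every presheaf of complexes occurring in the diagram of \ref{num:syntomicrecall} --- namely $R\Gamma_\rig((-)_k)$ (with $K_0$-coefficients), $R\Gamma_{\rig,K}((-)_k)$, $R\Gamma_\dR(\tube{(-)_k}{(-)^w})$, $E_\dR((-)_K)$, $E'_\dR((-)_K)$ and $E_{\FdR,i}((-)_K)$ --- underlies, as $i$ ranges over $\NN$, the $\NN$-graded commutative monoid of a motivic ring spectrum over $(R,\QQ_p)$: for the first five terms this is the constant family equipped with its dga product and the stability section $c=\dlog$, and for the last it is the graded family of the filtered de Rham spectrum. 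That the hypotheses (Excision, Homotopy, Stability, Orientation) of Proposition \ref{prop:ring_spectrum_existence} hold over $R$ follows because the fibre functors $X\mapsto X_k$, $X\mapsto X_K$, $X\mapsto(X^w)_K$ carry smooth $R$-schemes to smooth schemes and preserve Nisnevich distinguished squares and $\AA^1$-homotopies, so the local and stable properties established in Propositions \ref{prop:rigidspectrum}, \ref{prop:absrigidspectrum} and in the de Rham propositions are inherited on $\smx R$. Each vertex therefore yields a stably fibrant Tate spectrum, in particular a strict ring spectrum in $\Spr(R,\QQ_p)$.

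Next I would check that the transition maps of the diagram --- identities, the quasi-isomorphisms of Proposition \ref{prop:rig}(2), the specialization $\spe$ of \ref{num:syntomicrecall}, the maps $E_{\FdR,0}\to E'_\dR\leftarrow E_\dR$, and the twisted Frobenius $\phi/p^i$ --- are all morphisms of presheaves of dga's which respect the graded structure (for $\phi/p^i$ one uses $\phi(ab)=\phi(a)\phi(b)$ together with $p^{-i}p^{-j}=p^{-(i+j)}$) and send $\dlog$ to $\dlog$; here $(\phi/p)(\dlog)=\dlog$ follows from the identity $(\phi\otimes 1)\circ\dlog=p\,\dlog$ used in the proof of Proposition \ref{prop:absrigidspectrum}. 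By the functoriality clause of Proposition \ref{prop:ring_spectrum_existence} each of these induces a morphism in $\Spr(R,\QQ_p)$, so we obtain a finite diagram $\underline{\mathscr D}$ of ring spectra over $(R,\QQ_p)$ whose $v$-th vertex $\E^v$ has $i$-th component $E^v_i$.

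Then I would set $\E_\syn:=\holim\underline{\mathscr D}$, which exists by \ref{num:homotopy_limits} and is a ring spectrum; taking the homotopy limit in the model structure of Theorem \ref{thm:model_ring_sp} one may moreover take $\E_\syn$ to be a stably fibrant Tate spectrum (needed for Section \ref{sec:modules}). Since the forgetful functor $U$ is right Quillen by Theorem \ref{thm:model_ring_sp}, it preserves homotopy limits, so $U\E_\syn=\holim U\underline{\mathscr D}$; applying this to the unit maps gives $\eta_{\E_\syn}=\holim\eta_v$, and since each vertex is a Morel motive we have $\eta_v\circ\epsilon=-\eta_v$ for all $v$, hence $\eta_{\E_\syn}\circ\epsilon=-\eta_{\E_\syn}$. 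By Lemma \ref{lm:ring_Morel}, $\E_\syn$ is a Morel motive, so it is a motivic ring spectrum and an object of $\DMB(R,\QQ_p)$. Finally, because $\Hom_{\DMB(R)}(M(X),-)$ commutes with homotopy limits, the complex computing $\bigl(\Hom_{\DMB(R)}(M(X),\E_\syn(i)[n])\bigr)_n$ is the homotopy limit over $v$ of the complexes computing $\bigl(\Hom_{\DMB(R)}(M(X),\E^v(i)[n])\bigr)_n$, which by \eqref{eq:compute_rep} applied over $R$ are the $E^v_i(X)$. By the very definition of $R\Gamma_\syn(X,i)$ in \ref{num:syntomicrecall} this homotopy limit is $R\Gamma_\syn(X,i)$, whence $\E_\syn^{n,i}(X)\cong H^n(R\Gamma_\syn(X,i))=H^n_\syn(X,i)$, functorially in $X$ and compatibly with products. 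Since $\E_\syn\in\DMB(R,\QQ_p)$, all the statements of Section 2 apply verbatim.

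\textbf{Main obstacle.} The formal part --- forming the homotopy limit of ring spectra and checking it remains in $\DMB$ --- is painless given Theorem \ref{thm:model_ring_sp} and \ref{num:homotopy_limits}. The real work lies in the first two steps: arranging that all six kinds of complexes in Besser's diagram are simultaneously presheaves of $\NN$-graded dga's on $\smx R$ satisfying the criterion of Proposition \ref{prop:ring_spectrum_existence} (which forces care in passing between $K_0$- and $K$-coefficients and between the bases $k$, $K$ and $R$), and verifying that each of Besser's structural maps --- in particular $\phi/p^i$ and $\spe$ --- is genuinely compatible with the multiplications and with the chosen $\dlog$-sections. Once this bookkeeping is done the representability theorem and the existence of homotopy limits of ring spectra finish the argument.
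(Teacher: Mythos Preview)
Your proposal is correct and follows the same overall strategy as the paper: exhibit each vertex of Besser's diagram as a motivic ring spectrum, check that the transition maps are morphisms of such, take the homotopy limit in $\Spr(R,\QQ_p)$, and observe that the result remains a Morel motive.

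There is one presentational difference worth noting. You build every vertex directly as a ring spectrum over $R$ by precomposing the relevant presheaf of complexes with the fibre functors $X\mapsto X_k$, $X\mapsto X_K$, $X\mapsto (X^w)_K$ and then re-verifying the hypotheses of Proposition~\ref{prop:ring_spectrum_existence} on $\smx R$. The paper instead reuses the ring spectra $\E_\phi$, $\E_{\rig,K}$ and $\E_\FdR$ already constructed over $k$ and $K$, and imports them to $\DMB(R,\QQ_p)$ via the six-functor pushforwards $i_*$ and $j_*$; only the ``mixed'' intermediate vertices (those involving $(X_k,X^w)$) are built directly over $R$. The paper's route is slightly cleaner because $i_*$ and $j_*$ are monoidal right adjoints, so no axioms need to be re-checked for the outer vertices, and it makes the later localisation analysis (Proposition~\ref{prop:syntomic localization}) immediate. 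Your route is more hands-on but perfectly valid, and the two diagrams agree up to canonical identification since $\Hom_{\DMB(R)}(M(X),i_*\E)\cong \Hom_{\DMB(k)}(M(X_k),\E)$, and similarly for $j_*$. Your Morel-motive verification via Lemma~\ref{lm:ring_Morel} is also fine; the paper simply invokes that the subcategory of Morel motives is closed under homotopy limits.
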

\begin{proof}
	 By construction  the absolute rigid spectrum $\E_\phi$ maps to $\E_\rig$ and so to the base change  $\E_{\rig,K}$.  By the six functor formalism we get the following functors
	 \[
	   i_*:\DMB(k,\QQ_p)\to \DMB(R,\QQ_p)\ ,\ j_*:\DMB(K,\QQ_p)\to \DMB(R,\QQ_p) 
	 \]
	 induced by the usual closed (resp. open) immersion of schemes $i:\Spec(k)\to \Spec (R)$ (resp. $j:\Spec(K)\to \Spec(R)$).
Then we define $\E_{\syn}$ as the  homotopy limit (in the category of
ring spectra) of the following diagram
$$
 i_*\E_\phi\to i_*\E_{\rig,K}\gets a\to b\gets c\to d \leftarrow j_*\E_\FdR
$$
where $a,b,c,d$ are the ring spectra induced by $
  E_\rig(X_k,X^w)$, $R\Gamma_{\dR}(\tube{X_k}{X_w})$, $E_\dR(X_K)$,
$E_\dR'(X_K)$, respectively: we leave to the reader the verification that they are ring
spectra following the same proof as the one of \ref{prop:rigidspectrum}.%\footnote{It is important to remark that the above diagram can be reduced to one of the following form $A\to C\gets B$ where all the morphisms are compatible with the multiplication. The homotopy limit of such a diagram exists.}

To conclude the proof it is sufficient to note that an homotopy limit of a diagram of  Morel motives is also a Morel motive.
\end{proof}
\begin{rem}
Given a complete discrete valuation ring	 $R$ with residue field $k$ and fraction field $K$,
 such that $R/W(k)$ is finite, we get a map of ring spectra
 in $\DMB(k)$:
$$
a_0:\E_\phi \rightarrow \E_{\rig,K_0}
 \rightarrow \E_{\rig,K_0} \otimes_{K_0} K
 \xrightarrow{\sim} \E_{\rig,K}
$$
where the last isomorphism comes from Remark \ref{rem:bc_spectrum_Erig}.
Let us put $a=i_*(a_0)$.

Secondly, we get a morphism of ring spectra
 in $\DMB(R)$:
$$
b:j_*\E_{\FdR} \rightarrow j_*\E_{\dR}
 \xrightarrow{\mathrm{sp}} i_*\E_{\rig,K}.
$$
The first map is the canonical morphism,
 and the second one is the specialization map
 induced by the morphism $\spe_X$ of Paragraph \ref{num:syntomicrecall}.

Then the syntomic ring spectrum is characterized
 up to isomorphism by the following homotopy pullback
 square (of morphisms of ring spectra):
\begin{equation}\label{eq:Esyn_pullback}
\xymatrix@=15pt{
\E_{\syn}\ar_\beta[d]\ar^-\alpha[r]
 & j_*\E_{\FdR}\ar^b[d] \\
i_*\E_\phi\ar^-a[r] & i_*\E_{\rig,K}
}
\end{equation}
In other words, one can define $\E_\syn$
 as the homotopy limit of the lower corner of the above diagram 
 -- but this definition
 is less precise than the one given in the proof of the previous proposition  as (in this way)
 $\E_\syn$ is  defined only up to non
 unique isomorphism.

The fact that the preceding square is a homotopy
pullback can be translated into the existence
of a distinguished triangle in $\DMB(R)$:
\begin{equation} \label{eq:syntomictriangle}
\E_\syn
 \xrightarrow{\alpha+\beta} i_*\E_\phi \oplus j_*\E_{\FdR}
 \xrightarrow{a-b} i_*\E_{\rig,K} \xrightarrow{+1} 
\end{equation}
which corresponds to the long exact sequence, for $X/R$ smooth:
\begin{equation} \label{eq:syntomicsequence}
\hdots \rightarrow H_\syn^{n}(X,i)
 \xrightarrow{\alpha_*+\beta_*} H_\phi^{n}(X_k,i) \oplus F^iH^n_\dR(X_K)
 \xrightarrow{a_*-b_*} H^n_{\rig}(X_k/K) 
 \rightarrow  \hdots
\end{equation}
Here, $\alpha_*$ (resp. $\beta_*$) is the usual projection
 map from syntomic cohomology to $\E_\phi^{n,i}(X_k)=H_\phi^n(X_k,i)$
(resp. $F^iH^n_\dR(X_K)$) while $a_*$ is the canonical map
 and $b_*$ is induced by the specialization map from de Rham cohomology
 to rigid cohomology.

Note also that $\E_\syn$ is the homotopy limit of the diagram
 of ring spectra
$$
\xymatrix@=20pt{
& & j_*\E_{\FdR}\ar^b[d] \\
i_*\E_{\rig}\ar@<2pt>^{\Phi}[r]\ar@<-1pt>_{\id}[r]
 & i_*\E_{\rig}\ar[r]
 & i_*\E_{\rig,K}
}
$$
so that we also obtain the following distinguished triangle:
$$
\E_\syn
 \rightarrow i_*\E_{\rig} \oplus j_*\E_{\FdR}
 \rightarrow i_*\E_{\rig} \oplus i_*\E_{\rig,K}
 \xrightarrow{+1} 
$$
which precisely induces the long exact sequence
 originally considered by Besser.
\end{rem}
\begin{rem}
Syntomic cohomology can be functionally extended to diagrams
 of $S$-schemes, as well as rigid cohomology, absolute rigid cohomology
 and filtered de Rham cohomology.
 One should be careful however that the syntomic long exact sequence 
 \eqref{eq:syntomictriangle}
 can be extended only to the case of diagrams of smooth $S$-schemes.
\end{rem}
\subsection{Localizing syntomic cohomology}
\begin{num} \label{num:recall_loc}
As the fibred triangulated category $\DMB$ satisfies the
``gluing formalism" (this is called the localization property in
\cite{CD3}, cf. sec. 2.3), we get a canonical distinguished triangle:
\begin{equation}\label{eq:local_Esyn}
i_*i^!(\E_\syn) \xrightarrow{ad'_i} \E_\syn
 \xrightarrow{ad_j} j_*j^*(\E_\syn) \xrightarrow{\partial_i}
 i_*i^!(\E_\syn)[1]
\end{equation}
for $i:\Spec k \rightarrow \Spec R$ and $j:\Spec K \rightarrow \Spec R$
 the natural immersions.
 The maps $ad'_i$ and $ad_j$ are the obvious adjunction maps
 and the map $\partial_i$ is the unique morphism which fits
 in this distinguished triangle (see \cite[2.3.3]{CD3}).
\end{num}

\begin{rem}
One can be more precise about the gluing formalism:
 given any object $M$ of $\DMB(R)$, there exists a unique
 distinguished triangle of the form
$$
M_k \rightarrow M \rightarrow M_K \xrightarrow{\partial} M_k[1]
$$
such that $M_k$ (resp. $M_K$)
 has support in $\Spec k$, \textit{i.e.} $j^*M_k=0$
  (resp. in $\Spec K$, \textit{i.e.} $i^!(M_K)=0$).
This means that there exists a canonical isomorphism
 of that triangle with the following one:
$$
i_*i^!(M) \xrightarrow{ad'_i} M \xrightarrow{ad_j} j_*j^*(M)
 \xrightarrow{\partial_i} i_*i^!(M)[1].
$$ 
\end{rem}
\begin{num}
    Let us introduce yet another spectrum: we consider the map
    $$
    a_0:\E_\phi \rightarrow E_{\rig,K}
    $$
    which is defined at the level of the underlying model category,
     and take its homotopy fiber $\check \E_\phi$.
		In particular, we have a canonical morphism:
		$i_*E_{\rig,K} \xrightarrow{\partial_{a}}  i_*\check\E_\phi[1]$.
\end{num}
\begin{prop}\label{prop:syntomic localization}
Consider the above notations.
Then the syntomic spectrum is equivalent to the homotopy fiber of the morphism
\[
\check \spe:j_*\E_\FdR \xrightarrow{b} i_*\E_{\rig,K}
 \xrightarrow{\partial_{a}}    i_*{\check \E_\phi}[1]\ .
	\]
Moreover, there are canonical identifications
		\[
		i^!\E_{\syn}= \check\E_{\phi},
		\qquad
j^*\E_{\syn}=\E_\FdR
		\]
through which the localization triangle \eqref{eq:local_Esyn}
 is identified with
$$
i_*{\check \E_\phi} \rightarrow \E_\syn
 \rightarrow j_*\E_\dR \xrightarrow{\check \spe} i_*{\check \E_\phi}[1].
$$
\end{prop}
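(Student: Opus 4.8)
The plan is to derive everything from the homotopy pullback square \eqref{eq:Esyn_pullback} defining $\E_\syn$, the defining fibre sequence of $\check \E_\phi$, and the gluing (localization) formalism of $\DMB$. Recall that $\E_\syn$ is the homotopy pullback of $i_*\E_\phi\xrightarrow{a}i_*\E_{\rig,K}\xleftarrow{b}j_*\E_\FdR$, equivalently it sits in the Mayer--Vietoris triangle \eqref{eq:syntomictriangle}; and that $\check \E_\phi$ is by construction the homotopy fibre of $a_0\colon\E_\phi\to E_{\rig,K}$, so that applying the exact functor $i_*$ to the corresponding distinguished triangle gives
\[
i_*\check \E_\phi\longrightarrow i_*\E_\phi\xrightarrow{a}i_*\E_{\rig,K}\xrightarrow{\partial_a}i_*\check \E_\phi[1].
\]

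First I would compute the fibre of the projection $\alpha\colon\E_\syn\to j_*\E_\FdR$. Combining the Mayer--Vietoris triangle \eqref{eq:syntomictriangle} with the triangle just displayed via the octahedral axiom (equivalently: the square \eqref{eq:Esyn_pullback} is homotopy cartesian, hence also cocartesian, so the fibre of $\alpha$ agrees with the fibre of $a$) identifies $\mathrm{fib}(\alpha)$ with $\mathrm{fib}(a)=i_*\check \E_\phi$. Moreover the square \eqref{eq:Esyn_pullback} yields a morphism of fibre sequences whose map on fibres is, under this identification, the identity; so by naturality of connecting maps the boundary map $j_*\E_\FdR\to i_*\check \E_\phi[1]$ is $\partial_a\circ b=\check \spe$. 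Hence there is a distinguished triangle
\[
i_*\check \E_\phi\longrightarrow\E_\syn\xrightarrow{\alpha}j_*\E_\FdR\xrightarrow{\check \spe}i_*\check \E_\phi[1],
\]
which in particular exhibits $\E_\syn$ as the homotopy fibre of $\check \spe$.

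It then remains to identify this triangle with the localization triangle \eqref{eq:local_Esyn}. Its left term $i_*\check \E_\phi$ satisfies $j^*(i_*\check \E_\phi)=0$ and its right term $j_*\E_\FdR$ satisfies $i^!(j_*\E_\FdR)=0$ (since $j^*i_*\simeq0$ and $i^!j_*\simeq0$), so by the uniqueness part of the gluing formalism (\ref{num:recall_loc} and the following remark) it is canonically the localization triangle of $\E_\syn$. Comparing terms and using that $i_*$ and $j_*$ are fully faithful ($i^!i_*\simeq\id$, $j^*j_*\simeq\id$) then produces the canonical identifications $i^!\E_\syn\simeq\check \E_\phi$ and $j^*\E_\syn\simeq\E_\FdR$, through which \eqref{eq:local_Esyn} becomes the stated triangle $i_*\check \E_\phi\to\E_\syn\to j_*\E_\FdR\xrightarrow{\check \spe}i_*\check \E_\phi[1]$. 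The main obstacle I anticipate is the sign bookkeeping in the octahedral step; this can be avoided by running that argument in the model category of ring spectra, where $\E_\syn$ is literally a homotopy limit and the identifications hold strictly.
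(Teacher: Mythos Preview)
Your proposal is correct and follows essentially the same route as the paper: both arguments use the homotopy pullback square \eqref{eq:Esyn_pullback} to identify the fibre of $\alpha$ with that of $a$, obtain the triangle $i_*\check\E_\phi\to\E_\syn\to j_*\E_\FdR\xrightarrow{\check\spe}i_*\check\E_\phi[1]$, and then invoke the uniqueness of the localization triangle. The only cosmetic difference is that the paper phrases the first step as a direct comparison of cone diagrams rather than naming the octahedral axiom.
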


\begin{rem}
In fancy terms, the generic fiber of $\E_\syn$ is the
 ring spectrum $\E_\FdR$.
 While we cannot compute the special fiber of $\E_\syn$,
 its exceptional special fiber is the ring spectrum
 which is "the image of absolute rigid cohomology
 in rigid cohomology" and $\E_\syn$ is obtained by gluing these
 two ring spectra.
\end{rem}
\begin{proof}
By definition of $\check \E_\phi$, there is a canonical distinguished triangle in $\DMB(k)$:
$$
\check \E_\phi \xrightarrow{\nu_0}
 \E_\phi \xrightarrow{a_0} E_{\rig,K}
 \xrightarrow{\partial_{a_0}} \check \E_\phi[1]
$$
which induces the following triangle after applying $i_*$:
$$
i_*\check \E_\phi \xrightarrow{\nu}
 i_*\E_\phi \xrightarrow{a} i_*E_{\rig,K}
 \xrightarrow{\partial_{a}} i_*\check \E_\phi[1].
$$
Now according to the fact the square \eqref{eq:Esyn_pullback}
 is a homotopy pullback, one gets a canonical commutative
 diagram in $\DMB(R)$:
$$
\xymatrix@=15pt{
C(\alpha)\ar[r]\ar_\sim[d]
 & \E_{\syn}\ar_\beta[d]\ar^-\alpha[r]
 & j_*\E_{\FdR}\ar^b[d]\ar[r]
 & C(\alpha)[1]\ar^\sim[d] \\
i_*\check \E_\phi\ar^\nu[r]
& i_*\E_\phi\ar^-a[r] & i_*E_{\rig,K}\ar^{\partial_a}[r]
& i_*\check \E_\phi[1].
}
$$
In other words, we get a distinguished triangle of the form:
$$
i_*\check \E_\phi \rightarrow
 \E_{\syn} \xrightarrow{\alpha} j_*\E_{\FdR}
 \xrightarrow{\check \spe} i_*\check \E_\phi[1].
$$
Finally, according to the above remark,
 one gets a canonical isomorphism of triangles:
$$
\xymatrix@=20pt{
i_*\check \E_\phi\ar[r]\ar_\sim[d]
 & \E_{\syn}\ar_\sim[d]\ar^-\alpha[r]
 & j_*\E_{\FdR}\ar^\sim[d]\ar^{\check \spe}[r]
 & i_*\check \E_\phi[1]\ar^\sim[d] \\
i_*i^! \E_{\syn}\ar^{ad_i}[r]
 & \E_{\syn}\ar^{ad_j}[r]
 & j_*j^*(\E_{\syn})\ar^{\partial_i}[r]
& i_*i^! \E_{\syn}[1]\ .
}
$$
\end{proof}
\begin{rem}[The work of Tamme]
 The  {\em relative cohomology theory} $H^*_\rel(X,*)$ of \cite{Tam:Kar11} is
represented by the (generalized) cone of the diagram
$$
i_*\E_{\rig,K}\gets a\to b\gets c\to d \leftarrow j_*\E_\FdR
$$
where we use the notation of the proof of Proposition~\ref{prop:syntomicspectrum}. This is roughly a Cone of a
morphism of ring spectra $A\to B$, hence it is not a ring spectrum and
in particular there is no unit section.

It follows by the localization sequence  that this cohomology theory is
represented by the cone of the canonical adjunction map $\E_\syn\to
i_*i^!\E_\syn=i_*\E_\phi$.
\end{rem}
\begin{ex}\label{ex:computation1}
Let $S=\Spec\big(W(k)\big)$ (for simplicity) and  $X$ be the connected component of the N\'eron model of an elliptic curve with multiplicative reduction, i.e. $X$ is an $S$-group scheme such that its generic fiber is an elliptic curve and the special fiber is isomorphic to $\mathbb{G}_m$. Then $X/S$ is smooth and we can easily compute the long exact sequence for syntomic cohomology. For instance we get
$$
0\to H_\rig^0(X_s)\xrightarrow{a} H_\rig^0(X_s)\oplus H_\rig^0(X_s)\to H_\syn^{1,1}(X)\to
$$

$$
\to H_\rig^1(X_s)\oplus F^1H_\dR^1(X_\eta)\xrightarrow{b}  H_\rig^1(X_s)\oplus H_\rig^1(X_s)\to H_\syn^{2,1}(X)\to F^1H_\dR^2(X_\eta)\to 0
$$
where $a(x)=(x-\phi(x)/p,-x)$ is injective and $b(x,y)=(0,y-x)$. It follows that $H_\syn^{n,1}\cong K^2$ (as $\QQ_p$-vector spaces) for $n=1,2$.

The same result can be obtained using the localization triangle: explicitly we get the following exact sequence
$$	
0\to H_{\syn,s}^{1,1}(X)\to H_\syn^{1,1}(X)\to F^1H^1_\dR(X_\eta)\xrightarrow{\delta}H_{\syn,s}^{2,1}(X)\to H_\syn^{2,1}(X)\to F^1H^2_\dR(X_\eta)\to 0\ .
$$
Here $H_{\syn,s}^{1,1}(X)$ stands for $\Hom(\QQ(X),i_*i^!\E_\syn(1)[1])$. Using  proposition~\ref{prop:syntomic localization} we get $H_{\syn,s}^{n,1}(X)=H^{n-1}_\rig(X_s)$ for $n=1,2$. We also get that $\delta$ is the zero map. For a complete account on the de Rham/rigid cohomology of abelian varieties and their reduction we refer to \cite{Le-:Coh86}.
\end{ex}	
\begin{ex}[Semistable elliptic curve] \label{ex:computation_semistable_curve}
Let $X/S$ be an elliptic curve such that
$X_k$ is a nodal cubic. We assume that the
singular point $x_0\in X_k$ is $k$-rational. The above remark give a recipe to
compute (or approximate) the syntomic cohomology of $X$
$$
\E_\syn^{n,i}(X):=\Hom_{\DMt(S,\QQ_p)}(M(X),\E_{\syn}(i)[n])
$$
 where  $M(X)=f_!f^!(\QQ_S)$ and $f:X\to S$ is the structural morphism.
 Let us compute $i^*(M(X))$. Given the pullback
 square:
 $$
 \xymatrix{
 X_k\ar^l[r]\ar_{f_0}[d] & X\ar^f[d] \\
 \Spec k\ar^i[r] & S
 }
 $$
 one has a canonical exchange map:
 $$
 i^*f_!f^!(\QQ_S) \simeq f_{0!}l^*f^!(\QQ_S)
  \rightarrow f_{0!}f_0^!i^*(\QQ_S)=f_{0!}f_0^!(\QQ_k)=M(X_k)
 $$
 (the first iso is due to the base change theorem of the six functors
  formalism). This map is an isomorphism in the two following cases:
 \begin{itemize}
 \item $f$ is smooth,
 \item $X$ is regular and $f$ is quasi-projective (and so in our case).
  
  In the second case, this is due to the absolute purity theorem: as $f$ is quasi-projective,
  it can be factored $f=pi$ where $p:P \rightarrow S$
  is smooth and $i$ is a closed immersion and then one computes:
 $$
 f^!(\QQ_S)=i^!p^!(\QQ_S) \simeq i^!(\QQ_P)(d)[2d] \simeq \QQ_X(d-n)[2(d-n)]
 $$
 the first iso follows as $p$ is smooth and the second one because $i$
 is a closed immersion between regular schemes. 
 Here $d$ (resp. $n$) is the relative dimension of $P/S$ (resp. codimension
 of $i$), so that $d-n$ is the relative dimension of $X/S$.
 \end{itemize}

 Hence for $X$ semistable we have long exact
sequences
$$
i^!\E_\syn^{n,i}(X_k)\to \E_\syn^{n,i}(X)\to
j_*j^*\E_\syn^{n,i}(X)=F^iH_\dR(X_\eta)\to +
$$

The term $i^!\E_\syn^{n,i}(X_k)$ depends only on the special fiber. In this
case it is easy to construct a proper and smooth hypercover $Y_*$ of $X_k$:
let    $\pi:\widetilde{X}_k\to X_k$ be the normalization map, then we may take
$Y_0=x_0\sqcup \widetilde{X}_k$, $Y_1=\pi^{-1}(x_0)$ and $Y_i=\varnothing$ for 
$i>1$. Since $\widetilde{X}_k$ is isomorphic to the projective line we get that
$M(X_k)=\QQ \oplus \QQ[1] \oplus \QQ(1)[2]$ in $\DMB(k,\QQ)$. This decomposition
allows to estimate $i^!\E_\syn^{n,i}(X_k)$. For instance we can compute
$$
 i^!\E_\syn^{n,i}(X_k)=H^{n-1}_\rig (X_k)_K\simeq K \qquad \text{for}\ n=1,2\ .
$$
\end{ex}
%%%

%%%
%%%

\subsection{Syntomic regulator}

\begin{num} \label{rem:syntomic_cycle_class}
By using the general definition of \S~\ref{num:cycle_classes}     we get the syntomic (resp. rigid, de Rham, etc.) cycle classes.
Since all the maps of the homotopy pullback square
 \eqref{eq:Esyn_pullback} are morphisms of monoids in $\DMB(R)$, we get the following commutative diagram:
$$
\xymatrix{
H^n_\syn(X,m)\ar^{\alpha_*}[rr]\ar_{\beta_*}[dd]
 && F^mH^n_\dR(X_K)\ar^{\spe}[dd] \\
& \HB^{n,m}(X)\ar_{\sigma_\phi i^*}[ld]\ar@{}|{(a)}[d]\ar@{}|{(b)}[r]
 \ar|{\sigma_\syn}[lu]\ar^{\sigma_\FdR j^*}[ru]\ar|{\sigma_\rig i^*}[rd] & \\
H^n_{\phi}(X_k,m)\ar[rr] && H^n_{\rig}(X_k/K)
}
$$
where $\sigma_?$ stands for the higher cycle classes relevant to the 
corresponding cohomology , and $i^*$ (resp. $j^*$) denotes the pullback
in motivic cohomology by $i$ (resp. $j$).
%\end{num}

\begin{enumerate}
\item 
The part (a) of the above commutative diagram
 simply express the fact that for any smooth $k$-scheme $X_0$,
 the higher cycle class map
$$
\sigma_\rig:\HB^{n,m}(X_0) \rightarrow H^n_\rig(X/K_0)
$$
lands into the part $\phi=p^m$ of rigid cohomology and that
 it admits a canonical lifting to the absolute rigid cohomology
 $H_\phi^{n,m}(X)$ through the canonical surjection
$$
H_\phi^{n,m}(X) \rightarrow H^n_\rig(X/K_0)^{\phi=p^m}
$$
of Corollary \ref{cor:absolute_rig_sequence}.
\item One can deduce from the commutativity of the part (b)
 of the above diagram another proof of the fact,
 already obtained in \cite{ChiCicMaz:Cyc10}, that the specialization map
 $\spe$ is compatible with the specialization map $\spe_{\CH}$
 in Chow theory as defined in \cite[\textsection 20.3]{Ful}.
Indeed, in the case $n=2m$, (b) can be rewritten as follows:
$$
\xymatrix@R=10pt@C=36pt{
& \CH^m(X_K)\ar^-{\sigma_\FdR}[r]\ar@{-->}^{\spe_{\CH}}[dd]
 & F^{2m}H^m_\dR(X_K)\ar^{\spe}[dd] \\
\CH^m(X)\ar_{i^*}[rd]\ar^{j^*}[ru] && \\
& \CH^m(X_k)\ar^-{\sigma_\rig}[r] & H^m_{\rig}(X_k/K)
}
$$
and the assertion follows as $j^*$ is surjective and $\spe_{\CH}$
 is the unique morphism making the left hand side commutative.
\item (Concerning the terminology)
The term ``higher cycle classes'' comes from the theory of higher Chow groups
 -- which, for smooth $R$-schemes, coincide rationally with
 Beilinson motivic cohomology according to \cite[14.7]{Lev}.

The term ``syntomic regulator'' has been introduced by M.~Gros
 in \cite{Gros}. It comes from the intuition that
 syntomic cohomology is an analogue of Deligne cohomology
 and that one can transport the setting of Beilinson's conjectures
 from Deligne cohomology to syntomic cohomology.
 One should be careful however that in the case of Deligne cohomology
 and if $(2m-n)=1$, then the higher cycle class map is only a part
 of the regulator (see \cite[\textsection 3.3]{Sou}).
\end{enumerate}
\end{num}

\begin{rem}
The syntomic Chern classes are constructed as in \S~\ref{num:chern_classes}. These are determined by 
 the first Chern class $c_1$
 of the canonical line bundle of $\PP^1_R$.
 According to our construction of the syntomic ring spectrum,
 this is nothing else than the class $\dlog$. 
One deduces that the Chern classes obtained here in syntomic cohomology
 coincide with the one previously constructed
 by A.~Besser in \cite{Bes:Syn00}.
\end{rem}
\begin{prop}
Let $f:Y \rightarrow X$ be a projective morphism between
 smooth $R$-schemes, and denote by $f_k$ (resp. $f_K$)
 its special (resp. generic) fiber.
Then the following diagram is commutative:
$$
\xymatrix@C=8pt@R=20pt{
H_\syn^{n}(Y,i)\ar^-{\alpha_*+\beta_*}[r]\ar_{f_*}[d]
 & H_\phi^{n}(Y_k,i) \oplus F^iH^n_\dR(Y_K)\ar^-{a_*-b_*}[r]
     \ar^{f_{k*}+f_{K*}}[d]
 & H^n_{\rig}(Y_k/K)\ar[r]\ar^{f_{k*}}[d]
 & H_\syn^{n+1}(Y,i)\ar^{f_*}[d] \\
H_\syn^{n-2d}(X,i-d)\ar_-{\alpha_*+\beta_*}[r]
 & H_\phi^{n-2d}(X_k,i-d) \oplus F^{i-d}H^{n-2d}_\dR(X_K)\ar_-{a_*-b_*}[r]
 & H^{n-2d}_{\rig}(X_k/K)\ar[r] & H_\syn^{n-2d+1}(X,i-d)
}
$$
where the lines are given by the exact sequences
  \eqref{eq:syntomicsequence}.
\end{prop}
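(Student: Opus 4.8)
The plan is to exhibit the whole diagram as the image of the distinguished triangle of ring spectra \eqref{eq:syntomictriangle} under a single cohomological functor, so that its commutativity becomes a formal consequence of the bifunctoriality of $\Hom$ in the triangulated category $\DMB(R,\QQ_p)$. Write $\gamma_f\colon M(X)\to M(Y)(-d)[-2d]$ for the Gysin morphism on motives built in \cite{Deg8}. Recall from the paragraph preceding Theorem~\ref{thm:Gysin} that, for \emph{any} object $\E$ of $\DMB(R,\QQ_p)$, the Gysin map $f_*\colon\E^{n,i}(Y)\to\E^{n-2d,i-d}(X)$ is the composite of the canonical isomorphism $\Hom(M(Y),\E(i)[n])\simeq\Hom(M(Y)(-d)[-2d],\E(i-d)[n-2d])$ (tensoring with $\un(-d)[-2d]$) with precomposition by $\gamma_f$. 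Both operations are natural in $\E$, so $f_*$ is a morphism of cohomological functors on $\DMB(R,\QQ_p)$, from $\Hom(M(Y),{-}(i)[n])$ to $\Hom(M(X),{-}(i-d)[n-2d])$.

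The first step is then to apply these two cohomological functors to the twisted triangle $\E_\syn(i)[n]\to (i_*\E_\phi\oplus j_*\E_{\FdR})(i)[n]\to i_*\E_{\rig,K}(i)[n]\xrightarrow{+1}$: the functor attached to $Y$ produces the top long exact sequence \eqref{eq:syntomicsequence}, the one attached to $X$ the bottom sequence, once one uses the adjunctions $\Hom_{\DMB(R,\QQ_p)}(M,i_*(-))\simeq\Hom_{\DMB(k,\QQ_p)}(i^*M,-)$ and $\Hom_{\DMB(R,\QQ_p)}(M,j_*(-))\simeq\Hom_{\DMB(K,\QQ_p)}(j^*M,-)$ together with $i^*M(Y)=M(Y_k)$, $j^*M(Y)=M(Y_K)$ (and likewise for $X$) to recognize the middle and right terms as $\E_\phi$-, $\E_{\FdR}$- and $\E_{\rig,K}$-cohomology of the relevant fibres. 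Feeding the natural transformation $f_*$ into the three morphisms of that triangle --- the two displayed maps and the connecting morphism, which is itself a morphism of $\DMB(R,\QQ_p)$ --- then yields at once the commutativity of every square of the diagram. What is left is to identify the induced vertical maps on the middle and right columns with the Gysin morphisms $f_{k*}$, $f_{K*}$ of the special and generic fibres; through the adjunctions just mentioned they become $\Hom(i^*\gamma_f,-)$ and $\Hom(j^*\gamma_f,-)$ up to the twist-shift isomorphism, so the point is the base-change identities $i^*(\gamma_f)=\gamma_{f_k}$ and $j^*(\gamma_f)=\gamma_{f_K}$.

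I expect this last base-change compatibility of the Gysin morphism to be the only non-formal ingredient, and hence the main obstacle. Since $X$ and $Y$ are smooth over $R$, the fibres $X_k,Y_k$ (resp. $X_K,Y_K$) are smooth over $k$ (resp. $K$), so $f_k$, $f_K$ are again projective morphisms of smooth schemes and $\gamma_{f_k}$, $\gamma_{f_K}$ make sense; I would then revisit the construction of $\gamma_f$ in \cite{Deg8} --- factor $f$ through a closed immersion into a projective bundle over $X$, and combine the Gysin morphism of that immersion (built by deformation to the normal cone) with the projective bundle formula --- and check that each ingredient is preserved by the exact monoidal pullback functors $i^*$, $j^*$, which is precisely what is established in \cite{Deg8}. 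Granting this, the proposition follows formally, and shows in particular that the Gysin morphism in syntomic cohomology is compatible with the decomposition of $H^{*}_\syn$ into its rigid, absolute-rigid and filtered de Rham constituents.
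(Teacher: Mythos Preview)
Your proof is correct and follows essentially the same approach as the paper's: both use that the Gysin map in $\E$-cohomology is obtained by applying $\Hom(-,\E(i)[n])$ to the fixed morphism of motives $\gamma_f\colon M(X)\to M(Y)(-d)[-2d]$, hence is natural in the coefficient spectrum $\E$, and then feed this naturality into the distinguished triangle \eqref{eq:syntomictriangle}. You are simply more explicit than the paper about the one non-formal point --- identifying the induced vertical maps on the fibre terms with the intrinsic Gysin morphisms $f_{k*}$, $f_{K*}$ via the base-change compatibilities $i^*\gamma_f=\gamma_{f_k}$ and $j^*\gamma_f=\gamma_{f_K}$ --- which the paper takes for granted.
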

\begin{proof}
    Applying the same formalism to the motivic ring spectra
     $\E_\FdR$, $\E_{\rig,K}$, $\E_\phi$, one obtains Gysin morphisms
     on their cohomology, satisfying the preceding properties.
    Moreover, using the distinguished triangle \eqref{eq:syntomictriangle}
     of $\DMB(R)$, one gets the  result.
\end{proof}

\begin{num}
    Recall that in \S~\ref{num:4theories} we have associated four theories (cohomology, homology, coho. with compact support, BM homology) to any motivic ring spectrum.
\begin{enumerate}
\item We get syntomic theories and the higher cycle class \eqref{eq:syntomic_higher_cycle} also  for singular $R$-schemes\footnote{Recall that for singular schemes,
 Beilinson motivic cohomology is defined after \cite{CD3} and \cite{Cis}
 as the graded part of homotopy invariant K-theory for the
 $\gamma$-filtration.}.

When focusing attention to Chow theory, one gets in particular:
\begin{itemize}
\item $X$ regular: $\sigma_\syn:\CH^n(X) \rightarrow H^{2n}_\syn(X,n)$.
\item $X$ regular quasi-projective: 
$\sigma_\syn:\CH_n(X) \rightarrow H_{2n}^{\syn,\BM}(X,n)$.
\end{itemize}
The second point follows from the fact
 $\HB^{n,i}(X) \simeq H_{2d-n,d-i}^{\mathcyr B,\BM}(X)$
 where $d$ is the (Krull) dimension of $X$ according to the motivic
 absolute purity theorem (\cite[14.4.1]{CD3}).
\item When the base scheme is $S=\Spec k$, we get  rigid (resp. absolute rigid) theories
 associated with $\E_{\syn,K}$ (resp. $\E_\phi$) and regulators for these
 theories.

In the case $K=K_0$, the Frobenius operator $\Phi$ of $\E_\rig$
 induces an action of Frobenius on all four theories, compatible with
 the regulator.
Moreover,
 the distinguished triangle \eqref{eq:triangle_Ephi} yields long exact
 sequences in all four theories.
\item When $S=\Spec K$, we get the  de Rham theory (resp. filtered de Rham)  associated with   $\E_\dR$ (resp. $\E_\FdR$)  equipped with regulators.
 The canonical map $\E_\FdR \rightarrow \E_\dR$ induces natural
 maps of these theories, compatible with regulators.

Consider the specialization map:
$$
\spe:j_*\E_\FdR \rightarrow i_*\E_{\rig,K}.
$$
Given any $R$-scheme $X$ with structural morphism $f$,
 and applying $f_*f^!$ to this map one obtains:
$$
\spe_*:H_n^{\FdR,\BM}(X_K,i) \rightarrow H_n^{\rig,K,\BM}(X_k,i)
$$
using the exchange isomorphisms: $f^!i_*=i'_*f_k^!$
 and $f^!j_*=j'_*f_K^!$.
Similarly, if we apply $f_*f^!$ to the distinguished triangle
 \eqref{eq:syntomictriangle}, one gets the following long exact
 sequence:
\begin{equation} \label{eq:syntomic_BM_sequence}
\hdots \rightarrow H^{\syn,\BM}_{n}(X,i)
 \xrightarrow{\alpha_*+\beta_*} H^{\phi,\BM}_n(X_k,i) \oplus H^{\FdR,\BM}_n(X_K,i)
 \xrightarrow{a_*-\spe_*} H_n^{\rig,K,\BM}(X_k,i)
 \rightarrow  \hdots
\end{equation}
\end{enumerate}
\end{num}

\begin{num}
All the theories considered in the previous paragraph
 satisfy the functorialities described in \S~\ref{num:functo}.
Moreover, regulators are compatible with these functorialities.
 Similarly, the maps $\spe_*$, $\alpha_*$, $\beta_*$, $a_*$
 and $b_*$ considered in part (3) of this example
 are natural with respect to proper covariant and smooth 
 contravariant functorialities.

Moreover, taking care of the functoriality explained in the previous
 remark for motivic BM-homology, one can check the following
 diagram is commutative:
$$
\xymatrix@R=10pt@C=36pt{
& H^\mathcyr B_{n,i}(X_K/K)\ar^-{\sigma_\FdR}[r]
& H_n^{\FdR,\BM}(X_K,i)\ar^{\spe_*}[dd] \\
H^\mathcyr B_{n,i}(X/R)\ar_{i^*}[rd]\ar^{j^*}[ru] & & \\
& H^\mathcyr B_{n,i}(X_k/k)\ar^-{\sigma_\rig}[r] 
& H_n^{\rig,K,\BM}(X_k,i).
}
$$
When $X/R$ is quasi-projective regular with good reduction and $i=2n$,
 one obtains in particular a generalization of the second part
 of Remark \ref{rem:syntomic_cycle_class} (applying the
 motivic absolute purity theorem \cite[14.4.1]{CD3},
 all the motivic BM-homology in the above diagram
 can be identified with Chow groups in that case).

This fact can be extended to the exact sequence
 \eqref{eq:syntomic_BM_sequence} and to its compatibility
 with the regulator in syntomic BM-homology.
\end{num}
\subsection{Rigid syntomic modules} \label{sec:modules}

\begin{num}
The aim of this last section is to apply the theory
 developed in \cite[sec. 7.2]{CD3} to the syntomic
 ring spectrum $\E_\syn$.

Put $S=\Spec R$.
Recall that by construction,
 $\E_\syn$ can be seen as an object of $\Spr(S,\QQ)$
  (Paragraph \ref{num:ring_spectra}).

Let $f:X \rightarrow S$ be any morphism of schemes.
The pullback functor $f^*$ on the category of Tate spectra
 is monoidal. Thus, it obviously induces a functor:
$$
f^*:\Spr(S,\QQ) \rightarrow \Spr(X,\QQ).
$$
In particular, we can define the rigid syntomic ring spectrum
 over $X$ as follows:
$$
\E_{\syn,X}:=f^*(\E_\syn).
$$
The collection of these ring spectra
 defines a cartesian section of the fibered category
 $\Spr(-,\QQ)$ over the category of $R$-schemes.
In particular,
 one can apply \cite[Prop. 7.2.11]{CD3} to it.
In particular, the category of modules 
 over $\E_{\syn,X}$ in $\Sp(X,\QQ)$
 admits a model structure.
\end{num}
\begin{df}
Consider the above notations.

We define the category $\smod_X$
 of \emph{rigid syntomic modules over $X$}
 as the homotopy category
 of the model category of modules over 
 the ring spectrum $\E_{\syn,X}$.
\end{df}

\begin{num}
According to \cite{CD3}, Prop. 7.2.13 and 7.2.18,
 rigid syntomic modules inherit the good functoriality 
 properties of the stable homotopy category
 (in the terminology of \cite[Def. 2.4.45]{CD3}, 
   the category $\smod$, fibered over the category of
	 $R$-schemes, is motivic).
Let us recall briefly the six functors formalisms:
 given a morphism $f:T \rightarrow S$ of $R$-schemes,
 one has two pairs of adjoint functors:
\begin{align*}
& f^*:\smod_S \leftrightarrows \smod_T:f_*\ , \\
& f_!:\smod_T \leftrightarrows \smod_S:f_!\ ,
 \text{ for $f$ separated of finite type,}
\end{align*}
and $\smod_X$ is triangulated closed monoidal.
 We denote by $\otimes$ (resp. $\uHom$) the tensor product
 (resp. internal Hom).
\begin{itemize}
\item $f_*=f_!$ for $f$ proper,
\item \underline{Relative purity}:
 $f^!=f^*(d)[2d]$ for $f$ smooth of constant relative dimension $d$,
\item \underline{Base change formulas}: $f^*g_!=g'_!f^{\prime*}$,
 for $f$ any morphism (resp. $g$ any separated morphism of finite type),
 $f'$ (resp. $g'$) the base change of $f$ along $g$ (resp. $g$ along $f$).
\item \underline{Projection formulas}: $f^!(M \otimes f^*(N))=f_!(M) \otimes N$.
\item \underline{Localization property}: 
 given any closed immersion $i:Z \rightarrow S$
 of $R$-schemes, with complementary open immersion $j$,
 there exists a distinguished triangle of natural transformations as follows:
\begin{align*}
j_!j^! &\rightarrow 1 \rightarrow i_*i^*
 \xrightarrow{\ \partial_i\ } j_!j^![1]
\end{align*}
where the first (resp. second) map denotes the counit (resp. unit) 
 of the relevant adjunction (as in Paragraph \ref{num:recall_loc}).
\end{itemize}
\end{num}

\begin{rem}
An important set of properties is missing in the theory
 of rigid syntomic modules. 

One will say that a syntomic module over $X$ is \emph{constructible}
 if and only if it is compact in the triangulated
 category $\smod_X$.
The category of constructible modules should enjoy
 the following properties: 
\begin{enumerate}
\item they are stable by the six operations
 (when restricted to excellent $R$-schemes),
\item they satisfy Grothendieck duality
 (existence of a dualizing module).
\end{enumerate}
To get these properties, one has only to prove
 the absolute purity for syntomic modules: 
 given any closed immersion
 $i:Z \rightarrow X$ of regular $R$-schemes,
 of pure codimension $c$,
 there exists an isomorphism:
$$
i^!(\un_X)=\un_Z(c)[2c].
$$
\end{rem}

\begin{num} \textit{Syntomic triangulated realization}.--
Applying again \cite{CD3}, Prop. 7.2.13,
 one gets for any $R$-scheme $X$ an adjunction
 of triangulated categories:
$$
L^\syn_X:\DMB(X) \leftrightarrows \smod_X:\mathcal O^\syn_X
$$
such that:
\begin{enumerate}
\item $\mathcal O^\syn_X$ is conservative.
\item For any Beilinson motive $M$ over $X$,
 one has an isomorphism
$$ \mathcal O^\syn L^\syn (M) \simeq M \otimes \E_\syn $$
functorial in $M$.
\item The functor $L^\syn_X$ commutes with the operations 
 $f^*$, $f_!$, $\otimes$.
\end{enumerate}
Let us denote by $\un_X$ the unit object of $\smod_X$.
According to point (2), one obtains a canonical isomorphism:
$$
\Hom_{\smod_X}\!\big(\un_X,\un_X(i)[n]\big)
 \simeq \E_\syn^{n,i}(X)
$$
which is functorial in $X$ and compatible with products.
\end{num}

\begin{rem}
In the preceding section, one has derived
 Bloch-Ogus axioms,
 for syntomic cohomology and syntomic BM-homology,
 from the functoriality of $\DMB$.
 In fact, as in \cite[Ex. 2.1]{BO},
  one can also obtain these axioms from the properties 
	of syntomic modules stated above.
\end{rem}

\begin{num} \textit{Descent properties}.--
According to \cite[sec. 3.1]{CD3},
 the $2$-functor $X \mapsto \smod_X$ can
 be extended to diagrams of $R$-schemes
 (as well as the syntomic triangulated realization).
 Moreover, the pair of functors $(f^*,f_*)$
 can be defined when $f$ is a morphism of diagrams of $R$-schemes.

From \cite[7.2.18]{CD3}, the motivic category
 $\smod$ is separated. Therefore,
 according to \cite[3.3.37]{CD3}, it satisfies h-descent
  (see Paragraph \ref{num:hdescent} for the h-topology):
 for any h-hypercover  $p:\mathcal X \rightarrow X$
 of $R$-schemes, the functor
$$
p^*:\smod_X \rightarrow \smod_\mathcal X
$$
is fully faithful.

Recall also the following more concrete
 version of descent: given any pseudo-Galois
 cover\footnote{$f$ is finite surjective
 and admits a factorization $f=p f'$ where 
 $f'$ is a Galois cover of group $G$
 and $p$ is radicial.}
 $f:Y \rightarrow X$ of group $G$,
 any syntomic module $M$ over $X$,
 the canonical morphism:
$$
M \rightarrow \big(f_*f^*(M)\big)^G
$$
is an isomorphism, where we have denoted by $?^G$
 the fixed point for the obvious action of $G$.
\end{num}

\bibliographystyle{amsalpha}
%\bibliography{frederic,nicola}

\providecommand{\bysame}{\leavevmode\hbox to3em{\hrulefill}\thinspace}
\providecommand{\MR}{\relax\ifhmode\unskip\space\fi MR }
% \MRhref is called by the amsart/book/proc definition of \MR.
\providecommand{\MRhref}[2]{%
  \href{http://www.ams.org/mathscinet-getitem?mr=#1}{#2}
}
\providecommand{\href}[2]{#2}

\end{document}